\pdfoutput=1

\documentclass[pdflatex,sn-mathphys-num]{sn-jnl}

\usepackage{graphicx}
\usepackage{amsmath,amssymb,amsfonts}
\usepackage{enumitem}
\usepackage{tikz}
\usetikzlibrary{angles,quotes,calc,arrows.meta,patterns,positioning,shapes.geometric}
\usepackage{cleveref}
\hypersetup{hidelinks}

\theoremstyle{thmstyleone}
\newtheorem{theorem}{Theorem}[section]
\newtheorem{lemma}[theorem]{Lemma}
\newtheorem{proposition}[theorem]{Proposition}
\newtheorem{corollary}[theorem]{Corollary}

\theoremstyle{thmstylethree}
\newtheorem{definition}[theorem]{Definition}

\theoremstyle{thmstyletwo}
\newtheorem{remark}[theorem]{Remark}

\begin{document}

\title[Robin Non-Localization]{Uniform Non-Localization under Robin Boundary Perturbations:
Spectral Splitting and Bounded Eigenspace Complexity}

\author*[1,2]{\fnm{Binh T.} \sur{Nguyen}}\email{ngtbinh@hcmus.edu.vn}

\affil*[1]{\orgdiv{Faculty of Mathematics and Computer Science},
\orgname{University of Science, Vietnam National University Ho Chi Minh City},
\orgaddress{\city{Ho Chi Minh City}, \country{Vietnam}}}

\affil[2]{\orgname{AISIA Lab},
\orgaddress{\city{Ho Chi Minh City}, \country{Vietnam}}}

\abstract{We study the high-energy spectral effect of Robin boundary perturbations on complete Laplace eigenspaces and its consequences for eigenfunction non-localization. Highly degenerate Neumann levels generally split under Robin perturbation, but this splitting need not produce a simple spectrum: distinct modal classes may still contribute to the same Robin eigenvalue. We prove that, for every fixed positive Robin parameter, the number of modal classes contributing to any one eigenvalue is uniformly bounded over the spectrum on the equilateral triangle and on every rectangle with rational squared aspect ratio.

On the equilateral triangle, for all sufficiently small Robin parameters,
the Neumann complete-eigenspace observation estimate persists with a
constant uniform both in the eigenvalue and in the boundary parameter.  For
an arbitrary fixed positive Robin parameter, a high-frequency shell-splitting
analysis gives a spectrum-wide bound on the number of modal classes
contributing to any one Robin eigenvalue.  In the nonsquare rectangular
case, the corresponding splitting is governed by a strongly convex profile
on weighted quadratic shells.  Since each modal class has uniformly bounded
plane-wave complexity, the spectral complexity bound implies observation on
every measurable set of positive measure through a multidimensional
Tur\'an--Nazarov inequality, without a frequency-separation assumption.
Thus, spectral simplicity is not required for complete-eigenspace
non-localization: uniformly bounded spectral coincidence complexity is
sufficient.}

\keywords{Robin Laplacian, high-energy spectrum, spectral splitting,
spectral multiplicity, complete eigenspaces, eigenfunction non-localization,
observability}

\pacs[MSC Classification]{35P20, 35J05, 58J50}

\maketitle


\section{Introduction}
\label{sec:introduction}

Let $\Omega\subset\mathbb R^2$ be a bounded domain, and let
$E_\lambda$ denote an eigenspace of a self-adjoint realization of the
Laplacian on $\Omega$.  Given a measurable set $V\subset\Omega$ with
$|V|>0$, we consider the complete-eigenspace observation problem
\begin{equation}
    \inf_{\lambda}
    \inf_{0\ne u\in E_\lambda}
    \frac{\|u\|_{L^2(V)}^2}
         {\|u\|_{L^2(\Omega)}^2}
    >0.
\label{eq:intro_unl}
\end{equation}
The infimum over the entire eigenspace is essential.  If an eigenvalue
is multiple, an arbitrary eigenfunction may be a linear combination
of several modes, so estimates for a distinguished eigenbasis do not
in general imply \eqref{eq:intro_unl}.

Robin boundary conditions provide a natural perturbation of the Neumann
spectrum in which both eigenvalues and eigenspace multiplicities may change.
At high energy this raises a question not visible at the level of individual
eigenvalue shifts: can the splitting and recombination of spectral branches
produce eigenspaces of increasing complexity, allowing eigenfunctions to
concentrate away from a fixed region of the domain?  The problem therefore
links the high-energy Robin spectrum to the spatial structure of complete
eigenspaces.

This paper studies the stability of \eqref{eq:intro_unl} under Robin
boundary perturbations.  Our principal model is the equilateral
triangle $T$, with
\begin{equation}
    -\Delta u=\lambda u
    \quad\text{in }T,
    \qquad
    \partial_\nu u+\sigma u=0
    \quad\text{on }\partial T,
    \qquad
    \sigma\geq0.
\label{eq:intro_robin_bc}
\end{equation}
The Neumann problem corresponds to $\sigma=0$.  The question is
whether complete-eigenspace observation remains uniform under this
perturbation, and what spectral information is sufficient to retain
such an estimate when the Robin spectrum is not simple.

Two distinct forms of uniformity arise.  Near the Neumann endpoint, the
observation constant must be uniform simultaneously in the spectral level
and in the Robin parameter.  For an arbitrary fixed $\sigma>0$, no
uniformity with respect to $\sigma$ is asserted.  The relevant
spectrum-wide question is instead whether the number of distinct modal
classes contributing to a single Robin eigenspace remains uniformly
bounded over the spectral level.  The two cases therefore require
different spectral mechanisms.

Our first result is uniform up to the Neumann endpoint.  There exists
$\sigma_0>0$ such that, for every measurable $V\subset T$ with
$|V|>0$,
\begin{equation}
    \inf_{0\leq\sigma\leq\sigma_0}
    \inf_{\lambda\in\operatorname{Spec}(-\Delta_\sigma)}
    \inf_{0\ne u\in E_{\lambda,\sigma}}
    \frac{\|u\|_{L^2(V)}^2}
         {\|u\|_{L^2(T)}^2}
    >0.
\label{eq:intro_small_robin}
\end{equation}
The small-parameter result is perturbative only in the boundary
parameter: the estimate is uniform over the entire, unbounded spectrum.
Pointwise convergence of individual Robin eigenvalues or eigenfunctions as
$\sigma\downarrow0$ is therefore insufficient, and it does not control
complete eigenspaces of nontrivial multiplicity.  We instead compare the
Robin modal subspaces directly with their Neumann limits and obtain a bound
uniform in the modal indices.  In particular, the relevant subspace gap is
$O(\sqrt{\sigma})$.  This rate is sharp for the full family of modal indices,
since the lowest Robin mode already exhibits square-root behavior.

For an arbitrary fixed $\sigma>0$, the problem is no longer
perturbative.  Eigenvalue displacement alone does not control complete
eigenspaces: distinct perturbed modal classes may still meet at the same
Robin eigenvalue.  The fixed-parameter problem is therefore one of spectral
coincidence complexity.  Let
$\kappa_\sigma(\lambda)$ denote the number of desymmetrized modal
classes contributing to the Robin eigenvalue $\lambda$, and set
\begin{equation}
    K_\sigma
    :=
    \sup_{\lambda\in\operatorname{Spec}(-\Delta_\sigma)}
    \kappa_\sigma(\lambda).
\label{eq:intro_complexity_condition}
\end{equation}
For a fixed $\sigma>0$, let
\[
K_\sigma
:=
\sup_{\lambda\in\operatorname{Spec}(-\Delta_\sigma)}
\#\{\text{modal classes contributing to the eigenspace at }\lambda\}.
\]
The principal fixed-parameter spectral statement is
\begin{equation}
    K_\sigma<\infty
    \qquad
    \text{for every fixed }\sigma>0.
\label{eq:intro_global_complexity}
\end{equation}
Thus, spectral simplicity is not required.  Accidental coincidences may
occur, but their complexity remains uniformly bounded over the spectrum.

To prove \eqref{eq:intro_global_complexity}, we first obtain
high-frequency splitting within the arithmetic Neumann shells.  Apart
from finitely many low-index configurations, the Robin correction
strictly orders the relevant modal classes on each shell.  A uniform
Robin--Neumann displacement bound then restricts a given Robin
eigenvalue to a bounded number of neighboring reference shells.  The
resulting bound is independent of the spectral level, although it may
depend on the fixed parameter $\sigma$.

The spectral bound has a direct consequence for observation.  We prove
that a subspace spanned by a uniformly bounded number of plane waves
satisfies an $L^2$ observation inequality on every measurable set of
positive measure, with a constant independent of the frequencies and
of their separation.  Since every Robin modal class has uniformly
bounded plane-wave complexity, \eqref{eq:intro_global_complexity}
implies a uniform Fourier-complexity bound for the complete
eigenspaces.  Hence, for every fixed $\sigma>0$ and every measurable
$V\subset T$ with $|V|>0$,
\begin{equation}
    \inf_{\lambda\in\operatorname{Spec}(-\Delta_\sigma)}
    \inf_{0\ne u\in E_{\lambda,\sigma}}
    \frac{\|u\|_{L^2(V)}^2}
         {\|u\|_{L^2(T)}^2}
    >0.
\label{eq:intro_fixed_robin}
\end{equation}
This yields the structural principle underlying the fixed-parameter
theory.  Observation of complete eigenspaces does not require spectral
simplicity.  It is enough to have a uniform bound on spectral coincidence
complexity, provided the individual modal classes have uniformly bounded
Fourier complexity.  In this sense, bounded eigenspace complexity replaces
simplicity as the relevant spectral condition.

The same principle is realized in a second spectral setting.  We prove
the fixed-parameter result for the square and, more generally, for every
rectangle
\[
    R_{a,b}=(0,a)\times(0,b)
\]
whose squared aspect ratio satisfies $a^2/b^2\in\mathbb Q$.  For every
fixed $\sigma>0$,
\begin{equation}
    K_{\sigma,a,b}^{\mathrm{rect}}<\infty,
\label{eq:intro_rect_complexity}
\end{equation}
and the corresponding complete-eigenspace observation estimate holds
on every measurable subset of $R_{a,b}$ of positive measure.  The two
geometries exhibit different spectral mechanisms: the triangle is governed
by a nonseparable secular system, whereas rational rectangles are separable
and lead to weighted arithmetic shells.  In the nonsquare case, the Neumann
reference levels are weighted quadratic shells.  The leading Robin correction is strongly convex with a unique
interior minimum, and the discrete spacing of admissible shell points
gives strict ordering on the two sides of this minimum.  Separate edge
estimates complete the same-shell coincidence bound.

The results above have three complementary components.  The first is
perturbative: on the equilateral triangle, complete-eigenspace observation
is uniform simultaneously in the spectral level and in the Robin parameter
for $0\leq\sigma\leq\sigma_0$.  The second is spectral: for every fixed
$\sigma>0$, the number of Robin modal classes contributing to a single
eigenvalue is uniformly bounded over the spectrum, without any simplicity
assumption.  The third is an observation principle: uniformly bounded modal
coincidence complexity, together with uniformly bounded plane-wave
complexity of the individual modal classes, yields measurable-set
observation with no frequency-separation hypothesis.  The fixed-parameter
spectral and observation results hold both on the equilateral triangle and
on every rectangle with rational squared aspect ratio.

The quantifiers in the perturbative and fixed-parameter results are
different.  The estimate \eqref{eq:intro_small_robin} is uniform for
$0\leq\sigma\leq\sigma_0$.  For an arbitrary fixed $\sigma>0$, the
constant in \eqref{eq:intro_fixed_robin} may depend on $\sigma$, and in the
rectangular case also on the fixed aspect ratio.  No uniform lower bound is
asserted over all $\sigma>0$ or over the family of rational aspect ratios.

\section{Related work}

The problem considered here lies at the intersection of high-energy Robin
spectral theory, eigenfunction localization, polygonal spectral geometry,
and observability.  A distinction relevant throughout is that between
results on individual eigenvalue branches or selected eigenfunctions and
estimates that hold uniformly for every vector in every eigenspace.  The
latter distinction becomes essential in the presence of spectral
multiplicity, since bounds for a chosen eigenbasis need not be preserved
under arbitrary linear combinations.

Localization of Laplace eigenfunctions and its dependence on the geometry of
the domain have been studied extensively; see the survey of Grebenkov and
Nguyen~\cite{GrebenkovNguyen2013}.  Explicit high-frequency localization
mechanisms, including whispering-gallery, bouncing-ball, and focusing modes,
were analyzed by Nguyen and Grebenkov~\cite{NguyenGrebenkov2013} in circular,
spherical, elliptical, and related separable geometries.  Localization can
also be generated by singular or degenerating geometry.  Nazarov, P\'erez,
and Taskinen~\cite{NazarovPerezTaskinen2016} studied localization of
Dirichlet eigenfunctions in thin nonsmooth domains; G\'omez, Nazarov, and
P\'erez-Mart\'inez~\cite{GomezNazarovPerezMartinez2021} obtained localization
effects for Dirichlet problems in domains surrounded by thin stiff and heavy
bands; and Cardone, Nazarov, and
Taskinen~\cite{CardoneNazarovTaskinen2024} proved localization and exponential
decay for eigenfunctions in a thin-walled Dirichlet beaker.  Further
perspectives on localization under geometric degeneration are given by
van den Berg and Bucur~\cite{VanDenBergBucur2025}, while
Steinerberger~\cite{Steinerberger2023} studied related questions concerning
growth and concentration of Laplace eigenfunctions on compact manifolds.

Quantitative non-localization has received comparatively less attention.
In one dimension, Liard, Lissy, and
Privat~\cite{LiardLissyPrivat2018} obtained uniform non-localization estimates
for Sturm--Liouville eigenfunctions, including observation on measurable
subsets.  For rational polygons, Marklof and
Rudnick~\cite{MarklofRudnick2012} proved configuration-space
equidistribution along a density-one subsequence of Dirichlet eigenfunctions.
This gives asymptotic nonconcentration for almost all eigenfunctions but
allows exceptional subsequences, and therefore does not yield an estimate
uniform over the complete spectrum.  In a different direction, Ceki\'c,
Georgiev, and Mukherjee~\cite{CekicGeorgievMukherjee2020} proved uniform
lower mass for Dirichlet and Neumann eigenfunctions near the singular
boundary of convex polyhedral billiards.  Their observation region is tied
to the singular set of the billiard, whereas the problem considered here
allows an arbitrary fixed measurable subset of positive measure.  Uniform
complete-eigenspace non-localization for the Dirichlet Laplacian on the
integrable polygons---rectangles, isosceles right triangles, equilateral
triangles, and hemi-equilateral triangles---is established in the companion
work~\cite{NguyenIntegrablePolygons2026}.  The estimates there are uniform
over the spectral level and over the complete eigenspace.  Related spectral
information for integrable polygons was obtained by M{\aa}rdby and
Rowlett~\cite{MardbyRowlett2025}.  The present problem differs in that the
geometry is fixed while the boundary condition is perturbed.

Reflection of the equilateral triangle across its sides connects the
Dirichlet and Neumann problems to Fourier analysis and observability on flat
tori.  Bourgain, Burq, and Zworski~\cite{BourgainBurqZworski2013} established
control results for Schr\"odinger operators on two-dimensional tori with
rough potentials, and Burq and Zworski~\cite{BurqZworski2019} obtained
observability for the free Schr\"odinger equation on rectangular
two-dimensional tori from arbitrary measurable sets of positive measure.
Semiclassical measures for the Schr\"odinger equation on tori and their
consequences for propagation and observability were studied by Anantharaman
and Maci\`a~\cite{AnantharamanMacia2014}.  For the equilateral triangle,
Alphonse and Lafontaine~\cite{AlphonseLafontaine2025} proved a stronger
Schr\"odinger observability theorem for Dirichlet and Neumann boundary
conditions.  Their dynamical result implies the stationary
complete-eigenspace estimate needed at the Neumann endpoint here.  We
nevertheless give a self-contained stationary proof in the form adapted to
the Robin perturbation argument.

The Fourier mechanism behind this reference estimate is closely related to
the work of Burq, Germain, Sorella, and
Zhu~\cite{BurqGermainSorellaZhu2026} on trace and observability inequalities
for Laplace eigenfunctions on flat tori.  In two dimensions, their analysis
uses bounded local clustering of lattice points on frequency circles.
After unfolding the equilateral triangle, the same two-dimensional lattice
geometry appears on the dual triangular lattice.  Although an arithmetic
eigenspace may contain an unbounded total number of frequencies, the relevant
local clusters have uniformly bounded size.  This permits a spectrum-uniform
estimate at the level of the complete eigenspace rather than at the level of
a selected Fourier mode.  For the fixed-Robin problem, a different
finite-dimensional mechanism is used.  Once the number of modal classes
contributing to one eigenspace is uniformly bounded, a multidimensional
Tur\'an--Nazarov inequality gives observation for the resulting plane-wave
space without requiring a lower bound on the separation of its frequencies.

There is also a substantial literature on observation from measurable sets
and quantitative unique continuation.  Apraiz, Escauriaza, Wang, and
Zhang~\cite{ApraizEscauriazaWangZhang2014} established observability
inequalities for the heat equation from sets of positive measure and proved
a Lebeau--Robbiano spectral inequality on suitable bounded domains.
Scale-free unique-continuation estimates for spectral projectors were
developed by Naki\'c, T\"aufer, Tautenhahn, and
Veseli\'c~\cite{NakicTauferTautenhahnVeselic2018}, while Egidi and
Veseli\'c~\cite{EgidiVeselic2020} proved scale-free
Logvinenko--Sereda and unique-continuation estimates on tori.
Davey~\cite{Davey2020} obtained quantitative unique continuation for
Schr\"odinger operators with singular lower-order terms, and Dicke, Rose,
Seelmann, and Tautenhahn~\cite{DickeRoseSeelmannTautenhahn2023} extended
scale-free estimates to spectral subspaces for classes of singular
potentials.  Related spectral inequalities for confining Schr\"odinger
operators and sparse sensor sets were obtained by Dicke, Seelmann, and
Veseli\'c~\cite{DickeSeelmannVeselic2024}.  Burq and
Moyano~\cite{BurqMoyano2023} developed propagation-of-smallness estimates
from sets of positive measure with applications to heat control, and
Kukavica and Li~\cite{KukavicaLi2025} recently proved measurable-set
observability for Gevrey functions and, in particular, for finite sums of
Laplace eigenfunctions on compact Gevrey manifolds.  These results apply in
far broader geometric and operator-theoretic settings than those considered
here.  Their quantitative constants, however, generally retain a dependence
on the spectral scale or on quantitative geometric parameters of the
observation set.  The estimates sought in \eqref{eq:intro_unl} instead concern
an arbitrary fixed measurable set of positive measure and require a lower
bound uniform over exact eigenspaces throughout the spectrum.  In the arithmetic geometries considered here, this stronger spectral
uniformity follows from uniform control of the Fourier complexity of the
exact eigenspaces.

The Robin part of the problem is closely related to the high-energy spectral
analysis of Rudnick, Wigman, and Yesha~\cite{RudnickWigmanYesha2021}, who
studied Robin--Neumann eigenvalue gaps on planar domains.  For rectangles,
Rudnick and Wigman~\cite{RudnickWigmanRectangles2021} analyzed the separable
Robin spectrum, its arithmetic multiplicities, and the dependence on the
aspect ratio.  For the equilateral triangle,
McCartin~\cite{McCartin2004Robin} obtained the complete Robin eigenstructure
in terms of symmetric and antisymmetric trigonometric modes governed by a
coupled system of transcendental secular equations.  Rudnick and
Wigman~\cite{RudnickWigman2022} subsequently studied Robin--Neumann
displacement, spectral gaps, multiplicities, and simplicity of the
desymmetrized spectrum for sufficiently small positive Robin parameter.
These works concern eigenvalue displacement, level structure, and
multiplicity.  The fixed-parameter question considered here is different:
for an arbitrary fixed positive Robin parameter, can the number of distinct
modal classes contributing to a single eigenspace grow without bound along
the spectrum?  We prove that it cannot on the equilateral triangle or on any
rectangle with rational squared aspect ratio.

More general Robin perturbation and asymptotic results include the
effective-Hamiltonian analysis of Pankrashkin and
Popoff~\cite{PankrashkinPopoff2016}, the large-coupling spectral analysis of
Belgacem et al.~\cite{BelgacemBelHadjAliBenAmorThabet2018}, the
Robin-to-Dirichlet asymptotics of Ognibene~\cite{Ognibene2025}, and the
quantitative compact-resolvent stability results of Bisterzo and
Siclari~\cite{BisterzoSiclari2025}.

The preceding spectral perturbation results do not by themselves give
uniform control of complete eigenspaces.  Near the Neumann endpoint, such
control requires comparison of the Robin and Neumann modal subspaces
uniformly in the modal indices, since the spectral level is unbounded as
$\sigma\downarrow0$.  For a fixed positive Robin parameter, the issue is
different: distinct modal classes may contribute to the same eigenvalue.
We prove that their number remains uniformly bounded over the spectrum, both
on the equilateral triangle and on rectangles with rational squared aspect
ratio.  Together with the frequency-uniform observation estimate for
finite plane-wave spaces, this yields complete-eigenspace observation
without requiring spectral simplicity.


\section{Mathematical setting and main results}
\label{sec:setting}

\subsection{The equilateral triangular cavity}
\label{subsec:triangle}

Let $T\subset\mathbb{R}^2$ be the open equilateral triangle of side
length $L>0$. Up to a rigid motion, we take its vertices to be
\begin{equation}
P_1=(0,0),\qquad
P_2=(L,0),\qquad
P_3=\left(\frac{L}{2},\frac{\sqrt{3}L}{2}\right),
\label{eq:triangle}
\end{equation}
and write $T$ for the interior of the triangle determined by
$P_1,P_2,P_3$; see Figure~\ref{fig:triangle_geometry}.

\begin{figure}[t]
\centering
\begin{tikzpicture}[scale=0.78]
    \coordinate (P1) at (0,0);
    \coordinate (P2) at (6,0);
    \coordinate (P3) at (3,{3*sqrt(3)});

    \draw[->] (-0.65,0) -- (6.85,0) node[right] {$x$};
    \draw[->] (0,-0.55) -- (0,5.85) node[above] {$y$};

    \fill[gray!10] (P1) -- (P2) -- (P3) -- cycle;
    \draw[thick] (P1) -- (P2) -- (P3) -- cycle;

    \fill (P1) circle (1.7pt);
    \fill (P2) circle (1.7pt);
    \fill (P3) circle (1.7pt);

    \node[below left] at (P1) {$P_1=(0,0)$};
    \node[below right] at (P2) {$P_2=(L,0)$};
    \node[above right] at (P3)
      {$P_3=\left(\frac{L}{2},\frac{\sqrt3 L}{2}\right)$};
    \node at (3,1.75) {$T$};
    \node[below] at (3,0) {$L$};
\end{tikzpicture}
\caption{The equilateral triangle $T$ and the coordinate convention used
throughout the paper. The shaded region denotes the open triangle; its
boundary is drawn only for geometric reference.}
\label{fig:triangle_geometry}
\end{figure}

We consider the eigenvalue problem
\begin{equation}
-\Delta u=\lambda u
\qquad\text{in }T
\label{eq:eigenproblem}
\end{equation}
with Dirichlet, Neumann, or Robin boundary conditions:
\begin{align}
u&=0
&&\text{on }\partial T,
\label{eq:dirichlet_bc}
\\
\partial_\nu u&=0
&&\text{on }\partial T,
\\
\partial_\nu u+\sigma u&=0
&&\text{on }\partial T,
\qquad \sigma\geq0,
\label{eq:robin_bc}
\end{align}
where $\nu$ denotes the outward unit normal. The Robin problem at
$\sigma=0$ coincides with the Neumann problem.

We denote by
$-\Delta_D$, $-\Delta_N$, and $-\Delta_\sigma$
the corresponding nonnegative self-adjoint realizations in $L^2(T)$,
with the convention
\begin{equation}
-\Delta_0=-\Delta_N.
\label{eq:robin_neumann_convention}
\end{equation}
Equivalently, $-\Delta_D$ is associated with the quadratic form
$$
    q_D[u]
    =
    \int_T |\nabla u|^2\\,dx,
    \qquad
    \mathcal D(q_D)=H_0^1(T),
$$
whereas $-\Delta_\sigma$, $\sigma\geq0$, is associated with
\begin{equation}
q_\sigma[u]
=
\int_T |\nabla u|^2\,dx
+
\sigma\int_{\partial T}|u|^2\,ds,
\qquad
\mathcal D(q_\sigma)=H^1(T).
\label{eq:robin_quadratic_form}
\end{equation}
In particular, $q_0$ is the Neumann form.

Since $T$ is bounded and Lipschitz, these forms define self-adjoint
operators with compact resolvent. Consequently, each spectrum is
discrete, every eigenvalue has finite multiplicity, and the only
accumulation point of the spectrum is $+\infty$.
For $B\in\{D,N\}$ and
$\lambda\in\operatorname{Spec}(-\Delta_B)$, we write
\begin{equation}
E_\lambda^B
:=
\ker(-\Delta_B-\lambda).
\label{eq:eigenspace_DN}
\end{equation}
For $\sigma\geq0$ and
$\lambda\in\operatorname{Spec}(-\Delta_\sigma)$, we similarly set
\begin{equation}
E_{\lambda,\sigma}
:=
\ker(-\Delta_\sigma-\lambda).
\label{eq:eigenspace_R}
\end{equation}
Thus,
\begin{equation}
E_{\lambda,0}=E_\lambda^N.
\label{eq:neumann_robin_eigenspace}
\end{equation}
Throughout the paper, all eigenfunction estimates are understood at
the level of the complete eigenspace; no distinguished eigenbasis is
fixed.

\subsection{Observation ratios and uniform non-localization}
\label{subsec:unl_definition}

Let $V\subset T$ be measurable with
$|V|>0$.
For $0\neq u\in L^2(T)$, define the observation ratio
\begin{equation}
\mathcal{O}_V(u)
:=
\frac{\|u\|_{L^2(V)}^2}
     {\|u\|_{L^2(T)}^2}.
\label{eq:observation_ratio}
\end{equation}
For an eigenspace $E\subset L^2(T)$, set
\begin{equation}
\mathfrak{c}_V(E)
:=
\inf_{0\neq u\in E}\mathcal{O}_V(u).
\label{eq:eigenspace_observation}
\end{equation}
Thus, $\mathfrak{c}_V(E)$ measures the smallest fraction of
$L^2$-mass visible in $V$ among all normalized elements of $E$.
This distinction is important when $\dim E>1$. In particular,
an estimate of the form
\[
\mathcal{O}_V(\phi_j)\geq c
\]
for each member of a chosen eigenbasis does not by itself imply
\[
\mathfrak{c}_V(E)\geq c,
\]
since linear combinations of basis elements may exhibit additional
cancellation on $V$.

\begin{definition}[Uniform non-localization]
\label{def:unl}
A Laplacian $-\Delta_B$ is said to satisfy uniform
non-localization on $V$ if there exists a constant $c_V>0$ such that
\begin{equation}
\mathfrak{c}_V(E_\lambda^B)\geq c_V
\label{eq:unl_definition}
\end{equation}
for every eigenvalue $\lambda$ of $-\Delta_B$.
\end{definition}
Equivalently,
\begin{equation}
\inf_{\lambda\in\operatorname{Spec}(-\Delta_B)}
\inf_{0\neq u\in E_\lambda^B}
\frac{\|u\|_{L^2(V)}^2}
     {\|u\|_{L^2(T)}^2}
>0.
\label{eq:unl_equivalent}
\end{equation}
For Robin boundary conditions, we write
\begin{equation}
c_V(\sigma)
:=
\inf_{\lambda\in\operatorname{Spec}(-\Delta_\sigma)}
\mathfrak{c}_V(E_{\lambda,\sigma}).
\label{eq:robin_unl_constant}
\end{equation}
The central problem is therefore to determine when
\begin{equation}
c_V(\sigma)>0,
\label{eq:positive_robin_constant}
\end{equation}
and, more strongly, whether this lower bound remains positive
uniformly as $\sigma$ varies.

\subsection{Finite Fourier complexity and measurable-set observation}
\label{subsec:finite_complexity}

The fixed-Robin theory reduces the observability problem to a statement
about eigenspaces with uniformly bounded Fourier complexity.  Since this
reduction is independent of the particular geometry of the equilateral
triangle, we first formulate and prove the corresponding observation
principle in an abstract setting.

Let $\Omega\subset\mathbb R^d$ be bounded and measurable with
$|\Omega|>0$.  For a finite-dimensional subspace
$E\subset L^2(\Omega)$, define its \emph{plane-wave complexity} by
\begin{equation}
\mathfrak F_\Omega(E)
:=
\inf\left\{
N\in\mathbb N:
E\subset
\operatorname{span}\{
e^{i\xi_1\cdot x},\ldots,e^{i\xi_N\cdot x}
\}\big|_\Omega
\text{ for some }\xi_j\in\mathbb R^d
\right\},
\label{eq:plane_wave_complexity}
\end{equation}
with the convention $\mathfrak F_\Omega(E)=\infty$ if no such finite
representation exists.  The frequencies may depend on $E$; only their
number is measured.

\begin{lemma}[Finite Fourier-complexity observation]
\label{lem:finite_complexity_observation}
Let $\Omega\subset\mathbb R^d$ be bounded and measurable with
$|\Omega|>0$, and let $V\subset\Omega$ be measurable with $|V|>0$.
For every $N\in\mathbb N$, there exists a constant
\[
    \gamma_{\Omega,V,N}>0
\]
such that
\begin{equation}
    \|p\|_{L^2(V)}^2
    \geq
    \gamma_{\Omega,V,N}
    \|p\|_{L^2(\Omega)}^2,
\label{eq:finite_complexity_observation}
\end{equation}
for every exponential polynomial
\begin{equation}
    p(x)
    =
    \sum_{j=1}^{M}a_j e^{i\xi_j\cdot x},
    \qquad
    M\leq N,
\label{eq:finite_exp_sum}
\end{equation}
with $a_j\in\mathbb C$ and $\xi_j\in\mathbb R^d$ arbitrary.
In particular, $\gamma_{\Omega,V,N}$ is independent of the magnitudes,
mutual separations, and spectral variation of the frequencies.
\end{lemma}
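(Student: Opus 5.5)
The plan is to derive the estimate from a multidimensional Turán--Nazarov inequality. The form we use is due to Nazarov in dimension one and to Fontes-Merz / Kós in several variables. Let $Q\supset\Omega$ be a fixed cube, which exists because $\Omega$ is bounded. For every exponential polynomial $p$ with at most $N$ terms (arbitrary $\xi_j\in\mathbb R^d$) and every measurable $U\subset Q$ with $|U|>0$, the inequality gives
\begin{equation*}
\sup_{Q}|p|\le \Bigl(\frac{C_d\,|Q|}{|U|}\Bigr)^{N-1}\sup_{U}|p|.
\end{equation*}
The constant depends only on $d$, $N$ and $|U|/|Q|$, and is independent of the frequencies. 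Proving this from scratch is the hard step. Our plan is not to reprove it but to invoke it as a black box, since it is classical. If a self-contained argument were needed, we would restrict to lines through a density point, apply the one-dimensional Nazarov theorem, and use a Fubini/polar-coordinate averaging argument.

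The first step passes from sup norms to $L^2$, with some care because the Turán bound is stated with $\sup_U$. Write $\|p\|_{L^2(\Omega)}^2\le |\Omega|\,\sup_Q|p|^2$. To bound $\|p\|_{L^2(V)}$ from below, choose the threshold
\[
t=\frac{\sup_Q|p|}{\bigl(2C_d|Q|/|V|\bigr)^{N-1}}
\]
and set $U=\{x\in V:|p(x)|\le t\}$. If $|U|\ge|V|/2$, the Turán--Nazarov inequality applied to $U$ gives $\sup_Q|p|\le (2C_d|Q|/|V|)^{N-1}t=\sup_Q|p|$ with the strict inequality $|p|\le t$ forcing a contradiction once the constant is adjusted by a factor of $2$. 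The adjustment is to replace $t$ by $t/2$, which yields $\sup_Q|p|\le \tfrac12\sup_Q|p|$, so $p\equiv0$. Hence, for $p\ne0$, the set $U$ has $|U|<|V|/2$. Then $|p|>t$ on a subset of $V$ of measure at least $|V|/2$, so
\[
\|p\|_{L^2(V)}^2\ge \tfrac{|V|}{2}t^2\ge c(d,N,|V|/|Q|)\,\sup_Q|p|^2\ge \frac{c}{|\Omega|}\|p\|_{L^2(\Omega)}^2.
\]

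This gives the lemma with $\gamma_{\Omega,V,N}=c/|\Omega|$. If $M<N$ terms occur, we simply use the bound for $M$, which is stronger, or equivalently allow zero coefficients. Coinciding frequencies merge into a single term, so $M\le N$ is preserved. Every constant depends only on $d$, $N$, $|V|$, $|\Omega|$ and the cube $Q$, which proves the asserted independence from the frequencies. The only real obstacle is the correct citation of the frequency-independent multidimensional Turán lemma. In particular, the exponent must depend on $N$ alone and not on the diameter of the frequency set.
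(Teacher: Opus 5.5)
Your proof is correct and follows the same route as the paper. Both arguments use a frequency-independent multidimensional Tur\'an--Nazarov inequality on a convex set containing $\Omega$ (the paper cites Brudnyi--Yomdin on a compact convex body $K$), followed by a level-set argument that converts the sup-norm bound into an $L^2$ bound. The difference is only cosmetic: the paper thresholds at $A=(2/|V|)^{1/2}\|p\|_{L^2(V)}$, uses Chebyshev plus inner regularity to obtain a compact $Z_0\subset V$ with $|Z_0|\ge|V|/4$ and $|p|\le A$ on it, and applies Tur\'an--Nazarov directly; you instead threshold relative to $\sup_Q|p|$ and argue by contradiction that $|p|>t$ on half of $V$, which works once you make your factor-of-$2$ correction to $t$.
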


\begin{proof}
Let $K\subset\mathbb R^d$ be a fixed compact convex body containing
$\Omega$.  We use the multidimensional Tur\'an--Nazarov inequality of
Brudnyi and Yomdin~\cite{BrudnyiYomdin2016}.  For every Borel set
$Z_0\subset K$ of positive measure, their estimate gives
\begin{equation}
    \|p\|_{L^\infty(K)}
    \leq
    \left(\frac{C_d|K|}{|Z_0|}\right)^{M-1}
    \|p\|_{L^\infty(Z_0)},
\label{eq:turan_nazarov_general}
\end{equation}
where $C_d>0$ depends only on the dimension.  Here, the exponents are
the linear functionals $f_j(x)=i\xi_j\cdot x$, so
\[
    \operatorname{Re}\bigl(f_j(x)-f_j(y)\bigr)=0
    \qquad (x,y\in K).
\]
Thus, the exponential growth factor in the general
Tur\'an--Nazarov estimate is equal to one.
Set
\[
    A:=\left(\frac{2}{|V|}\right)^{1/2}\|p\|_{L^2(V)},
    \qquad
    Z:=\{x\in V:|p(x)|\leq A\}.
\]
Chebyshev's inequality gives $|Z|\geq |V|/2$.  By inner regularity, there
is a compact set $Z_0\subset Z$ with
\[
    |Z_0|\geq\frac{|V|}{4}.
\]
Hence,
\[
    \|p\|_{L^\infty(Z_0)}
    \leq
    \left(\frac{2}{|V|}\right)^{1/2}\|p\|_{L^2(V)},
\]
and \eqref{eq:turan_nazarov_general} yields
\[
    \|p\|_{L^\infty(K)}
    \leq
    \left(\frac{4C_d|K|}{|V|}\right)^{M-1}
    \left(\frac{2}{|V|}\right)^{1/2}
    \|p\|_{L^2(V)}.
\]
Since $M\leq N$ and
$\|p\|_{L^2(\Omega)}
\leq|\Omega|^{1/2}\|p\|_{L^\infty(K)}$,
we obtain
\[
    \|p\|_{L^2(\Omega)}
    \leq
    C_{\Omega,V,N}\|p\|_{L^2(V)}
\]
for a finite constant depending only on the fixed sets
$\Omega,V$, the dimension, and $N$.  Taking
$\gamma_{\Omega,V,N}=C_{\Omega,V,N}^{-2}$ proves the claim.
\end{proof}

\begin{proposition}[Finite-complexity eigenspace observation principle]
\label{prop:finite_complexity_eigenspace_observation}
Let $\Omega\subset\mathbb R^d$ be bounded and measurable with
$|\Omega|>0$, and let $\{E_\lambda\}_{\lambda\in\Sigma}$ be any family
of finite-dimensional subspaces of $L^2(\Omega)$.  If
\begin{equation}
    \sup_{\lambda\in\Sigma}
    \mathfrak F_\Omega(E_\lambda)
    \leq N<\infty,
\label{eq:uniform_eigenspace_frequency_complexity}
\end{equation}
then for every measurable $V\subset\Omega$ with $|V|>0$,
\begin{equation}
    \|u\|_{L^2(V)}^2
    \geq
    \gamma_{\Omega,V,N}
    \|u\|_{L^2(\Omega)}^2,
\label{eq:abstract_eigenspace_observation}
\end{equation}
for every $\lambda\in\Sigma$ and every $u\in E_\lambda$.
\end{proposition}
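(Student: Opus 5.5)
The argument is a direct reduction to Lemma~\ref{lem:finite_complexity_observation}. I fix $\lambda\in\Sigma$ and $u\in E_\lambda$. By \eqref{eq:uniform_eigenspace_frequency_complexity} we have $\mathfrak F_\Omega(E_\lambda)\le N$. Since the infimum in \eqref{eq:plane_wave_complexity} is taken over a subset of $\mathbb N$, it is attained whenever it is finite. Hence there are $M:=\mathfrak F_\Omega(E_\lambda)\le N$ frequencies $\xi_1,\dots,\xi_M\in\mathbb R^d$, depending on $\lambda$, such that
\[
E_\lambda\subset\operatorname{span}\{e^{i\xi_1\cdot x},\dots,e^{i\xi_M\cdot x}\}\big|_\Omega .
\]
The degenerate case $E_\lambda=\{0\}$, or $M=0$ under any convention, is trivial, since then $u=0$.

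Next, I write $u=p|_\Omega$ almost everywhere, with
\[
p(x)=\sum_{j=1}^M a_j e^{i\xi_j\cdot x},\qquad a_j\in\mathbb C .
\]
Here I should make sure that the identity holds in $L^2(\Omega)$. The norms $\|u\|_{L^2(\Omega)}$ and $\|u\|_{L^2(V)}$ then coincide with those of $p$, because $V\subset\Omega$. Applying Lemma~\ref{lem:finite_complexity_observation} with this $M\le N$ gives
\[
\|u\|_{L^2(V)}^2=\|p\|_{L^2(V)}^2\ge\gamma_{\Omega,V,N}\|p\|_{L^2(\Omega)}^2=\gamma_{\Omega,V,N}\|u\|_{L^2(\Omega)}^2 .
\]

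The only point that needs care, and it is a mild one, is uniformity over $\lambda$. The frequencies $\xi_j$ vary with $\lambda$ and may be arbitrarily close together or arbitrarily large. The Lemma, however, provides a constant that depends only on $\Omega$, $V$ and the bound $N$, not on the frequencies. It is also monotone in the admissible number of terms, since a sum with $M<N$ terms is covered by the same $N$. Therefore the single constant $\gamma_{\Omega,V,N}$ works for every $\lambda\in\Sigma$, which is exactly \eqref{eq:abstract_eigenspace_observation}. No separation or boundedness of the frequency sets is used anywhere.
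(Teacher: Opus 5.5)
Your proposal is correct and matches the paper's proof: the uniform bound places each $E_\lambda$ in the span of at most $N$ plane waves, with frequencies depending on $\lambda$. Lemma~\ref{lem:finite_complexity_observation} then supplies a constant depending only on $\Omega$, $V$, $N$, hence uniform in $\lambda$. Your remarks that the infimum in \eqref{eq:plane_wave_complexity} is attained and that $u$ agrees with $p$ almost everywhere on $\Omega$ only make explicit what the paper leaves implicit.
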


\begin{proof}
For each $\lambda$, the definition
\eqref{eq:plane_wave_complexity} and
\eqref{eq:uniform_eigenspace_frequency_complexity} place every vector
of $E_\lambda$ in the span of at most $N$ plane waves.  Lemma
\ref{lem:finite_complexity_observation} then gives
\eqref{eq:abstract_eigenspace_observation} with the same constant for
all $\lambda$.
\end{proof}

\begin{corollary}[Modal complexity implies observation]
\label{cor:modal_complexity_observation}
Suppose that for every $\lambda\in\Sigma$,
\begin{equation}
    E_\lambda
    =
    \sum_{\mathfrak c\in\mathcal I(\lambda)}
    \mathcal R_{\mathfrak c},
\label{eq:abstract_modal_decomposition}
\end{equation}
where
\[
    \#\mathcal I(\lambda)\leq K
    \qquad\text{and}\qquad
    \mathfrak F_\Omega(\mathcal R_{\mathfrak c})
    \leq N_{\mathrm{mod}}
\]
uniformly in $\lambda$ and $\mathfrak c$.  Then
\begin{equation}
    \mathfrak F_\Omega(E_\lambda)
    \leq N_{\mathrm{mod}}K
\label{eq:abstract_modal_to_frequency_complexity}
\end{equation}
for every $\lambda$, and consequently
\begin{equation}
    \|u\|_{L^2(V)}^2
    \geq
    \gamma_{\Omega,V,N_{\mathrm{mod}}K}
    \|u\|_{L^2(\Omega)}^2
\label{eq:abstract_modal_observation}
\end{equation}
for every measurable $V\subset\Omega$ of positive measure,
every $\lambda$, and every $u\in E_\lambda$.
\end{corollary}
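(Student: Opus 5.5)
The argument has two steps: first bound the plane-wave complexity of each $E_\lambda$ by concatenating the frequency sets of its modal summands, then apply Proposition~\ref{prop:finite_complexity_eigenspace_observation} with $N=N_{\mathrm{mod}}K$.

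Fix $\lambda\in\Sigma$. For each $\mathfrak c\in\mathcal I(\lambda)$, the bound $\mathfrak F_\Omega(\mathcal R_{\mathfrak c})\le N_{\mathrm{mod}}$ and the definition \eqref{eq:plane_wave_complexity} give frequencies $\xi^{\mathfrak c}_1,\dots,\xi^{\mathfrak c}_{N_{\mathfrak c}}$ with $N_{\mathfrak c}\le N_{\mathrm{mod}}$ such that $\mathcal R_{\mathfrak c}$ lies in the span of $e^{i\xi^{\mathfrak c}_j\cdot x}$ restricted to $\Omega$. Since the infimum in \eqref{eq:plane_wave_complexity} is over natural numbers, it is attained, so such a finite family exists. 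If $\mathcal R_{\mathfrak c}=\{0\}$ the family may be empty, and this changes nothing.

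Now take an arbitrary $u\in E_\lambda$. The sum \eqref{eq:abstract_modal_decomposition} is a sum of subspaces, not assumed direct, so $u=\sum_{\mathfrak c}u_{\mathfrak c}$ with $u_{\mathfrak c}\in\mathcal R_{\mathfrak c}$. Each $u_{\mathfrak c}$ is a linear combination of its own plane waves. Hence $u$ lies in the span of the concatenated family $\{\xi^{\mathfrak c}_j\}_{\mathfrak c,j}$, which has at most $\sum_{\mathfrak c}N_{\mathfrak c}\le N_{\mathrm{mod}}\,\#\mathcal I(\lambda)\le N_{\mathrm{mod}}K$ elements.

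Repeated frequencies cause no difficulty. Duplicates can be merged, which only lowers the count, and Lemma~\ref{lem:finite_complexity_observation} allows arbitrary and even coincident $\xi_j$. The concatenated family depends only on $\lambda$, not on $u$, so the whole space $E_\lambda$ sits in one span of at most $N_{\mathrm{mod}}K$ plane waves. This proves \eqref{eq:abstract_modal_to_frequency_complexity}.

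Since the bound $N_{\mathrm{mod}}K$ is uniform in $\lambda$, hypothesis \eqref{eq:uniform_eigenspace_frequency_complexity} holds with $N=N_{\mathrm{mod}}K$. Proposition~\ref{prop:finite_complexity_eigenspace_observation} then gives \eqref{eq:abstract_modal_observation} with constant $\gamma_{\Omega,V,N_{\mathrm{mod}}K}$, for every measurable $V$ of positive measure, every $\lambda$, and every $u\in E_\lambda$.

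The only point needing care is the first step: the frequency sets must be chosen per subspace $\mathcal R_{\mathfrak c}$, not per vector, and their union must be taken before choosing $u$. This is what makes the complexity bound hold for all of $E_\lambda$ rather than for individual elements.
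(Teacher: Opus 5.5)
Your proposal is correct and follows the paper's proof: take the union of the frequency sets (at most $N_{\mathrm{mod}}$ each) for the at most $K$ modal spaces, note that coincident frequencies only lower the count, and apply Proposition~\ref{prop:finite_complexity_eigenspace_observation} with $N=N_{\mathrm{mod}}K$. Your additional remarks (the infimum is attained, the sum need not be direct, and the frequencies are fixed per subspace before choosing $u$) spell out steps the paper leaves implicit, and they are accurate.
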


\begin{proof}
Taking the union of the frequency sets needed for the modal spaces in
\eqref{eq:abstract_modal_decomposition} gives
\eqref{eq:abstract_modal_to_frequency_complexity}; coincident
frequencies can only reduce the count.  Proposition
\ref{prop:finite_complexity_eigenspace_observation} then gives
\eqref{eq:abstract_modal_observation}.
\end{proof}

\begin{remark}
\label{rem:finite_complexity_no_frequency_separation}
The mechanism above requires no lower bound on
$|\xi_j-\xi_k|$ and no upper bound on $|\xi_j|$.  The only uniform
quantity is the number of frequencies.  This is precisely why a bound
on spectral coincidence complexity can be converted into an
observation constant that is uniform over arbitrarily high spectral
levels.  No openness or thickness assumption on $V$ is used; positive
Lebesgue measure is sufficient.
\end{remark}

\subsection{Dirichlet and Neumann boundary conditions}
\label{subsec:DN_results}

Reflection across the sides of the equilateral triangle unfolds the Dirichlet and Neumann problems to the flat torus associated with the triangular reflection lattice. Although each individual reflection orbit of a wave vector is finite, a complete eigenspace may contain arbitrarily many such arithmetic orbits. The uniform observation estimate therefore requires control of the full spectral shell rather than a uniform bound on its number of plane-wave frequencies. This is achieved below by a two-dimensional lattice-clustering argument.

\begin{theorem}[Dirichlet uniform non-localization]
\label{thm:dirichlet_unl}
Let $V\subset T$ be measurable with $|V|>0$. Then, there exists a
constant
$c_{V,D}>0$
such that
\begin{equation}
\|u\|_{L^2(V)}^2
\geq
c_{V,D}\|u\|_{L^2(T)}^2
\label{eq:dirichlet_unl}
\end{equation}
for every Dirichlet eigenvalue $\lambda$ and every
$u\in E_\lambda^D$.

In particular,
\begin{equation}
\inf_{\lambda\in\operatorname{Spec}(-\Delta_D)}
\mathfrak{c}_V(E_\lambda^D)>0.
\label{eq:dirichlet_unl_inf}
\end{equation}
\end{theorem}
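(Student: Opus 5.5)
The proof will follow the route indicated just before the statement: unfold the Dirichlet problem to a torus, decompose each eigenfunction into lattice clusters on its frequency circle, and handle each cluster with Lemma~\ref{lem:finite_complexity_observation}. Lemma~\ref{lem:finite_complexity_observation} cannot be applied to the whole eigenspace at once, because the number of lattice points on a shell is unbounded. Instead, the bounded-complexity estimate is applied to each cluster separately, and the clusters are glued by near-orthogonality.

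\textbf{Step 1 (unfolding).} By reflecting across the sides, every $u\in E_\lambda^D$ extends to a function on $\mathbb R^2$ that is antisymmetric under the reflection group and periodic with respect to a lattice $\Gamma$. Hence $u(x)=\sum_{\xi\in\Gamma^*,\ |\xi|^2=\lambda}a_\xi e^{i\xi\cdot x}$, where $\Gamma^*$ is the dual triangular lattice, scaled by $2\pi$ and so on. The coefficients satisfy $a_{g\xi}=\det(g)\,a_\xi$ for $g$ in the point group $D_3$, extended as needed. By Parseval on a fundamental domain, $\|u\|_{L^2(T)}^2$ is comparable to $\sum|a_\xi|^2$ with an absolute constant. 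Write $S_\lambda=\{\xi\in\Gamma^*:|\xi|^2=\lambda\}$.

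\textbf{Step 2 (clustering).} Fix a scale $R\ge1$. Partition $S_\lambda$ into clusters $\mathcal C_1,\dots,\mathcal C_m$ by joining $\xi,\eta$ whenever $|\xi-\eta|\le R$ and taking connected components. The Jarník/Cilleruelo--Córdoba-type fact that an arc of length $\lambda^{1/2\cdot\delta}$ on a circle of radius $\sqrt\lambda$ contains boundedly many lattice points is not quite enough here. What I would invoke is the Bourgain--Rudnick / Burq--Germain--Sorella--Zhu statement that, in dimension two, for each $R$ the clusters at scale $R$ have cardinality at most $N(R)$ uniformly in $\lambda$, and distinct clusters are at least $R$ apart. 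Writing $u=\sum_k u_k$ with $u_k=\sum_{\xi\in\mathcal C_k}a_\xi e^{i\xi\cdot x}$, each $u_k$ has plane-wave complexity at most $N(R)$.

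\textbf{Step 3 (gluing).} Choose a smooth cutoff $\chi\ge0$ supported in $V'$, where $V'\subset V$ is a suitable subset. One cannot take $\chi\le \mathbf 1_V$ for a merely measurable set, so I would proceed differently. Apply Lemma~\ref{lem:finite_complexity_observation} to each $u_k$ on the fixed measurable set $V$ with $N=N(R)$, giving $\|u_k\|_{L^2(V)}^2\ge\gamma\|u_k\|^2$. The cross terms $\langle u_k,u_l\rangle_{L^2(V)}$ involve frequency differences of size at least $R$. Since $\mathbf 1_V$ is not smooth, I would approximate: the Fourier coefficients of $\mathbf 1_V$, taken on a large torus, satisfy $\sum_{|\zeta|\ge R}|\widehat{\mathbf 1_V}(\zeta)|^2\to0$. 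Combining this with a Schur-test bound that uses the bounded number of lattice points in balls of radius $\sim R$ on the circle, the off-diagonal contribution is at most $\epsilon(R)\sum|a_\xi|^2$ with $\epsilon(R)\to0$. The danger is that $\gamma=\gamma_{T,V,N(R)}$ shrinks as $R$ grows, so $R$ cannot be chosen naively. To fix this, first pass to a Lebesgue point and take $V$ to have density close to $1$ in a small ball, replacing $\mathbf 1_V$ by a smooth weight $w\le C\mathbf 1_V$ plus a small error controlled by a Nazarov--Sodin-type $L^\infty$ bound. For a smooth $w$, the off-diagonal decay is superpolynomial in $R$, while $\gamma$ decays at worst like $\exp(-CN(R))$. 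The bound $N(R)\lesssim_\epsilon R^{\epsilon}$ could be available, but this balance is the delicate point.

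The main obstacle is Step 3: making the cross-cluster interaction small relative to the cluster-wise constant $\gamma_{T,V,N(R)}$ for a merely measurable $V$. If this competition cannot be closed directly, I would fall back on the known result of Alphonse--Lafontaine cited above, whose Schrödinger observability on the equilateral triangle from measurable sets implies \eqref{eq:dirichlet_unl} for every eigenspace by the standard stationary reduction: take $e^{-it\lambda}u$ as the solution.
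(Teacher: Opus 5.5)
\textbf{There is a genuine gap in Step 3, and you located it yourself.} With a clustering scale $R$ fixed independently of $\lambda$, the two quantities you must compare do not close:
\begin{itemize}
\item The cross-cluster error, bounded in Hilbert--Schmidt norm, is $\epsilon(R)=\bigl(2\sum_{|\eta|\ge R}|\widehat\mu(\eta)|^2\bigr)^{1/2}$, where $\mu$ is normalized Lebesgue measure on the reflected set $\omega$. For a merely measurable set it tends to zero with no rate.
\item The per-cluster constant supplied by Lemma~\ref{lem:finite_complexity_observation} degenerates exponentially in $N(R)$, and $N(R)\to\infty$: every shell of radius $\le R/2$ is a single cluster at scale $R$, and shell cardinalities are unbounded.
\end{itemize}
Nothing forces $\epsilon(R)$ below that constant for any $R$.

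The repairs you sketch do not work either. A continuous $w\ge0$ with $w\le C\mathbf 1_V$ a.e.\ vanishes identically when $V$ is, for instance, a fat Cantor set, since its complement is open and dense. Absorbing $\int_{B\setminus V}|u|^2$ through an $L^\infty$ bound would need sup-norm control uniform in $\lambda$. That fails precisely because lattice-point multiplicity is unbounded: $\sum_{\xi\in S_\rho}e^{i\xi\cdot x}$ has sup-to-$L^2$ ratio $\sim(\#S_\rho)^{1/2}$. So the main route, as written, is incomplete.

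\textbf{The missing idea is to let the scale grow with the eigenvalue.} This is exactly what the Jarn\'ik-type fact you set aside provides, and it is Lemma~\ref{lem:lattice_jarnik} in the paper. Suppose three distinct points of $S_\rho(\Gamma)$ were joined by two edges shorter than $t$. They would span a lattice triangle of area $<t^3/(2\rho)$, which is below the minimal lattice area $A_\Gamma/2$ once $t=c_\Gamma\rho^{1/3}$. Hence connected components at scale $c_\Gamma\rho^{1/3}$ have at most two points, so chaining is not an issue.

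With this choice the within-cluster constant is fixed. You can get it either way:
\begin{itemize}
\item apply Lemma~\ref{lem:finite_complexity_observation} with $N=2$ on the torus to $\omega$, where $\|u_k\|^2=|\mathbb T_\Lambda^2|\sum_{\xi\in\mathcal C_k}|a_\xi|^2$ holds exactly; or
\item as the paper does, use directly $1-\sup_{\eta\ne0}|\widehat\mu(\eta)|>0$.
\end{itemize}
Working on the torus also avoids your use of $\|u_k\|_{L^2(T)}$, which is awkward because the pieces $u_k$ are not reflection-symmetric.

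The cross terms are then at most $\bigl(2\sum_{|\eta|\ge c_\Gamma\rho^{1/3}}|\widehat\mu(\eta)|^2\bigr)^{1/2}\sum|a_\xi|^2$. This uses only that each nonzero difference $\eta$ arises from at most two ordered pairs on a circle, so no Schur test over balls is needed. The bound tends to $0$ as $\rho\to\infty$ with no rate required, so the competition disappears. The finitely many shells with $\rho<\rho_0$ are handled as follows: a nonzero trigonometric polynomial cannot vanish on a set of positive measure, and the unit sphere of each such finite-dimensional space is compact.

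This is the paper's proof. Your Alphonse--Lafontaine fallback would give the statement by citation; the paper itself remarks that their observability result implies the stationary estimate. But it replaces, rather than completes, the argument you proposed.
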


\begin{theorem}[Neumann uniform non-localization]
\label{thm:neumann_unl}
Let $V\subset T$ be measurable with $|V|>0$. Then, there exists
$c_{V,N}>0$
such that
\begin{equation}
\|u\|_{L^2(V)}^2
\geq
c_{V,N}\|u\|_{L^2(T)}^2
\label{eq:neumann_unl}
\end{equation}
for every Neumann eigenvalue $\lambda$ and every
$u\in E_\lambda^N$.
Hence,
\begin{equation}
\inf_{\lambda\in\operatorname{Spec}(-\Delta_N)}
\mathfrak{c}_V(E_\lambda^N)>0.
\label{eq:neumann_unl_inf}
\end{equation}
\end{theorem}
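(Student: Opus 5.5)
We prove Theorem~\ref{thm:neumann_unl} by unfolding to a torus and then controlling the lattice points on each spectral shell. Lemma~\ref{lem:finite_complexity_observation} alone is not enough here, because a Neumann eigenspace may contain an unbounded number of reflection orbits.

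\emph{Unfolding.} Let $\Lambda$ be the lattice generated by reflections in the sides of $T$, and let $\Gamma=\mathbb R^2/\Lambda$ be the associated torus. $\Gamma$ is tiled by images of $T$ under the affine Weyl group $W$, with finite point group $W_0\cong S_3\rtimes$ reflections of order $6$. Every Neumann eigenfunction $u\in E_\lambda^N$ extends by even reflection to a $W$-invariant eigenfunction
\[
\tilde u(x)=\sum_{\xi\in\Lambda^*,\,|\xi|^2=\lambda} a_\xi e^{i\xi\cdot x}
\]
on $\Gamma$, where $a_{w\xi}$ is determined by $W$-invariance. Conversely every such invariant function restricts to $E^N_\lambda$. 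Two facts follow:
\[
\|\tilde u\|_{L^2(\Gamma)}^2=|W_0|\,\|u\|_{L^2(T)}^2,
\qquad
\|\tilde u\|_{L^2(\tilde V)}^2=|W_0|\,\|u\|_{L^2(V)}^2,
\]
where $\tilde V=\bigcup_{w}wV$ is the unfolded set. It therefore suffices to prove a torus estimate $\|\tilde u\|_{L^2(\tilde V)}\ge c\|\tilde u\|_{L^2(\Gamma)}$ for all eigenfunctions of the flat torus $\Gamma$ with lattice $\Lambda^*$. We will in fact prove it for all eigenfunctions, invariant or not, and for every measurable $\tilde V$ of positive measure.

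\emph{Torus estimate via lattice clustering.} We follow the two-dimensional mechanism of Burq--Germain--Sorella--Zhu, adapted to the triangular lattice $\Lambda^*$, which is a scaled Eisenstein lattice. The key arithmetic input is a Jarník/Cilleruelo--Córdoba type statement: for fixed $m$ there is $\varepsilon_m>0$ such that any arc of length $\varepsilon_m\lambda^{1/4}$ on the circle $|\xi|=\sqrt\lambda$ contains at most $m$ lattice points. More robustly, the lattice points on a circle of radius $R$ split into clusters of uniformly bounded size $\le N_0$, with distinct clusters separated by $\ge c R^{\delta}$ for some $\delta>0$. Granting this, decompose $\tilde u=\sum_k \tilde u_k$ into cluster pieces, and pick a smooth bump $\chi\ge0$ supported in $\tilde V$. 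For this we first reduce to a set containing a compact subset of density one points; more simply we use the following steps.

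\begin{enumerate}[label=(\roman*)]
\item Apply Lemma~\ref{lem:finite_complexity_observation} to each cluster piece. Since each piece has at most $N_0$ frequencies, this gives $\|\tilde u_k\|_{L^2(\tilde V)}^2\ge\gamma\|\tilde u_k\|^2_{L^2(\Gamma)}$, uniformly in $\lambda$.
\item Show the cross terms $\langle \mathbf 1_{\tilde V}\tilde u_j,\tilde u_k\rangle$, $j\ne k$, are small. They involve $\widehat{\mathbf 1_{\tilde V}}(\xi-\eta)$ with $|\xi-\eta|\ge cR^\delta$, and since $\widehat{\mathbf 1_{\tilde V}}\in\ell^2(\Lambda)$, its tail beyond radius $cR^\delta$ tends to $0$.
\item Combine (i) and (ii) with a Schur-test bound on the off-diagonal matrix. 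Because the number of clusters at mutual distance $\le D$ from a given cluster is $O(D^{C})$ uniformly, a dyadic summation makes the off-diagonal operator norm $o(1)$ as $\lambda\to\infty$. Hence $\|\tilde u\|_{L^2(\tilde V)}^2\ge(\gamma-o(1))\|\tilde u\|^2$ for $\lambda\ge\lambda_0$.
\item Treat the finitely many eigenvalues $\lambda<\lambda_0$ separately. Each such eigenspace is finite-dimensional and consists of real-analytic functions, which cannot vanish on a set of positive measure, so compactness of the unit sphere gives a positive constant.
\end{enumerate}

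The main obstacle is step~(iii): the off-diagonal control must be uniform despite unboundedly many clusters, and $\mathbf 1_{\tilde V}$ is merely $L^2$. Here I would follow BGSZ exactly. Replace $\mathbf 1_{\tilde V}$ from below by a smooth nonnegative $\chi_\epsilon$ with $\int\chi_\epsilon$ comparable to $|\tilde V|$, obtained by Lebesgue density and mollification of a compact subset. Proving (i) for $\chi_\epsilon$ directly from Lemma~\ref{lem:finite_complexity_observation} needs care, so alternatively obtain the diagonal lower bound from the Nazarov constant applied to the set $\{\chi_\epsilon>\tfrac12\sup\chi_\epsilon\}$. With $\chi_\epsilon$ smooth, $\hat\chi_\epsilon$ decays rapidly, and the polynomial counting of nearby clusters makes the Schur sum converge to $o(1)$. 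The arithmetic cluster lemma for the Eisenstein lattice is the other nontrivial input. I would prove it by the Jarník argument: any three lattice points on a short arc span a triangle of area $\ge$ a fixed lattice constant, and such a triangle forces an arc length $\gtrsim R^{1/3}$. Iterating this with bounded-size cluster growth as in Cilleruelo--Córdoba gives the separation.
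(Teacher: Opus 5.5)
Your unfolding, the Jarn\'ik clustering (connected components of the graph joining points at distance $<c\rho^{1/3}$ have at most two points), the diagonal bound (i) and the low-frequency step (iv) are sound and follow the paper's skeleton. The $\lambda^{1/4}$ arc statement you quote is stronger than what Cilleruelo--C\'ordoba prove, but it is not needed.

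The gap is step (iii), which is exactly where measurable sets differ from open ones. A smooth $\chi_\epsilon\geq0$ with $\chi_\epsilon\le\mathbf 1_{\tilde V}$ and $\chi_\epsilon\not\equiv0$ exists only if $\tilde V$ contains a ball up to a null set, because $\{\chi_\epsilon>0\}$ is a nonempty open set. For a set of positive measure with empty interior, such as a fat Cantor set, there is no such minorant. Mollifying $\mathbf 1_K$ for a compact $K\subset\tilde V$ puts mass outside $\tilde V$, so $\int|\tilde u|^2\chi_\epsilon$ no longer bounds $\|\tilde u\|_{L^2(\tilde V)}^2$ from below. As written, your argument therefore only covers $V$ with interior, which the paper points out is insufficient. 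Keeping $\mathbf 1_{\tilde V}$ and using a Schur test fails too. The sum $\sup_\xi\sum_\zeta|\widehat{\mathbf 1_{\tilde V}}(\zeta-\xi)|$ runs over an unbounded number of circle points, and mere $\ell^2$-tail decay of $\widehat{\mathbf 1_{\tilde V}}$ does not make it small.

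The missing ingredient is an elementary Zygmund-type count. For each $\eta\neq0$, at most two ordered pairs $(\xi,\zeta)$ on the circle satisfy $\zeta-\xi=\eta$, because $|\xi+\eta|=|\xi|$ forces $2\langle\xi,\eta\rangle+|\eta|^2=0$. This lets you bound the cross-cluster form in Hilbert--Schmidt norm instead of by Schur. Take $\mu$ to be normalized Haar measure restricted to $\tilde V$ and $\tilde u=\sum a_\xi e^{i\xi\cdot x}$. Cauchy--Schwarz then gives
\[
\Bigl|\sum_{\text{cross pairs}}a_\xi\overline{a_\zeta}\,\widehat\mu(\zeta-\xi)\Bigr|
\le\Bigl(\sum_\xi|a_\xi|^2\Bigr)\Bigl(2\sum_{|\eta|\ge c\rho^{1/3}}|\widehat\mu(\eta)|^2\Bigr)^{1/2}.
\]
The last factor is precisely the $\ell^2$ tail you identified in (ii), so it tends to $0$. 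No smoothing, polynomial cluster counting or dyadic summation is required; this is how the paper concludes.

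Alternatively, the same count yields the uniform bound $\|\tilde u\|_{L^4}\lesssim\|\tilde u\|_{L^2}$. That would let you replace $\mathbf 1_{\tilde V}$ by a smooth $L^2$-approximant (not a minorant) at a cost of $\|\tilde u\|_{L^4}^2\|\mathbf 1_{\tilde V}-\chi_\epsilon\|_{L^2}$. Either way, this counting fact is the step your proposal lacks.
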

The constants in
Theorems \ref{thm:dirichlet_unl} and \ref{thm:neumann_unl}
are allowed to depend on $V$, but not on the eigenvalue or on the
choice of vector within a multiple eigenspace.

\subsection{Stability under small Robin perturbations}
\label{subsec:small_robin}

We next consider the Robin boundary condition, which is
$
\partial_\nu u+\sigma u=0.
$
The Neumann problem is recovered at $\sigma=0$. Our main
stability result asserts that the non-localization estimate persists
uniformly for sufficiently small Robin perturbations.

\begin{theorem}[Uniform small-Robin stability]
\label{thm:small_robin}
Let $V\subset T$ be measurable with $|V|>0$. There exist constants
$\sigma_0>0$ and 
$c_{V,R}>0$
such that
\begin{equation}
\|u\|_{L^2(V)}^2
\geq
c_{V,R}\|u\|_{L^2(T)}^2,
\label{eq:small_robin_bound}
\end{equation}
for every
$0\leq\sigma\leq\sigma_0$, 
$\lambda\in\operatorname{Spec}(-\Delta_\sigma)$,
and 
$u\in E_{\lambda,\sigma}$.
Equivalently,
\begin{equation}
\inf_{0\leq\sigma\leq\sigma_0}
\inf_{\lambda\in\operatorname{Spec}(-\Delta_\sigma)}
\inf_{0\neq u\in E_{\lambda,\sigma}}
\frac{\|u\|_{L^2(V)}^2}
     {\|u\|_{L^2(T)}^2}
>0.
\label{eq:small_robin_uniform}
\end{equation}
The threshold $\sigma_0$ may depend on the observation set $V$
(and on the fixed triangle $T$), but is independent of the spectral level.
\end{theorem}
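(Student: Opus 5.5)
Since Theorem~\ref{thm:neumann_unl} already supplies $\|u\|_{L^2(V)}^2\geq c_{V,N}\|u\|_{L^2(T)}^2$ on every complete Neumann eigenspace, the plan is perturbative in $\sigma$ only. Write each Robin eigenfunction through McCartin's modal description \cite{McCartin2004Robin}: for each desymmetrized modal index $(m,n)$ and symmetry class, the Robin mode $\phi^\sigma_{m,n}$ is a finite trigonometric combination, that is, a sum of a bounded number of plane waves, whose wave vectors $\xi^\sigma_{m,n}$ are determined by the secular system and converge to the Neumann lattice vectors $\xi^0_{m,n}$ as $\sigma\downarrow0$. A complete Robin eigenspace $E_{\lambda,\sigma}$ is then the span of those Robin modes whose eigenvalue equals $\lambda$. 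The goal is to show that every $u\in E_{\lambda,\sigma}$ is close in $L^2(T)$, with relative error $O(\sqrt\sigma)$ uniformly in $\lambda$, to an element $w$ of a sum of Neumann eigenspaces, and that this perturbation is compatible with the observation estimate.

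\textbf{Step 1: uniform modal subspace gap.} Show that for each modal index the normalized Robin mode satisfies $\|\phi^\sigma_{m,n}-\phi^0_{m,n}\|_{L^2(T)}\leq C\sqrt\sigma$, with $C$ independent of $(m,n)$. Use the secular equations $\tan(\cdot)\sim\sigma/k$ to get $|\xi^\sigma-\xi^0|\lesssim\min(\sqrt\sigma,\sigma/|\xi^0|)$. The square-root regime occurs only at the lowest modes, where the Neumann tangent condition degenerates; for high modes the displacement is $O(\sigma/|\xi^0|)$. A phase shift of size $\delta$ in a plane wave over the bounded domain costs $O(\delta)$ in $L^2$, which gives $O(\sqrt\sigma)$ uniformly.

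\textbf{Step 2: near-orthonormality.} Use the same estimate to show that the Robin modes within one eigenspace form a Riesz family with constants $1+O(\sqrt\sigma)$; in fact they are orthogonal, being Robin eigenfunctions, possibly after orthonormalizing within degenerate classes. Then for $u=\sum a_j\phi^\sigma_j$, the function $w=\sum a_j\phi^0_j$ satisfies $\|u-w\|\leq C\sqrt\sigma\|u\|$.

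\textbf{Step 3: observation for $w$.} Here $w$ lies in the span of Neumann modes whose Neumann eigenvalues may differ. So an observation estimate is needed for such sums, not only for single Neumann eigenspaces. This is the main obstacle. There are two ways to handle it. First, the Robin--Neumann displacement is bounded, $|\lambda^\sigma-\lambda^0|\leq C\sigma$ uniformly in the spectrum via form comparison and the trace inequality. So for $\sigma\leq\sigma_0$ a Robin eigenvalue only collects Neumann modes from a window of width $C\sigma_0$. Second, the lattice-clustering argument behind Theorem~\ref{thm:neumann_unl} \cite{BurqGermainSorellaZhu2026} is robust in this setting: it controls any function with frequencies in a thin annulus of bounded width on the dual triangular lattice, since local clusters in such annuli still have bounded size. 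I would re-run that proof in its annulus form, which is where I expect the real work to lie, to get $\|w\|_{L^2(V)}^2\geq c'\|w\|^2$ uniformly.

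\textbf{Step 4: conclusion.} Combine the steps:
\[
\|u\|_{L^2(V)}\geq\|w\|_{L^2(V)}-C\sqrt\sigma\|u\|\geq(\sqrt{c'}(1-C\sqrt\sigma)-C\sqrt\sigma)\|u\|_{L^2(T)}.
\]
Choose $\sigma_0$ so that the bracket is at least $\sqrt{c'}/2$; this gives $c_{V,R}=c'/4$. Note that $\sigma_0$ depends on $V$ through $c'$, as the statement allows.
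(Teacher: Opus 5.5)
There is a genuine gap. It lies in Step~2, not in Step~3.

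\textbf{Step~2.} From the per-mode estimate $\|\phi_j^\sigma-\phi_j^0\|_{L^2(T)}\le C\sqrt\sigma$ you can only conclude
\[
\|u-w\|_{L^2(T)}\le C\sqrt\sigma\sum_j|a_j|\le C\sqrt{k\sigma}\,\|u\|_{L^2(T)},
\]
where $k$ is the number of modes spanning $E_{\lambda,\sigma}$. Even this needs the contributing McCartin modes to be orthonormal. That does not follow from their being Robin eigenfunctions, since they all share the eigenvalue $\lambda$. Re-orthonormalizing them destroys the mode-by-mode correspondence that Step~1 controls.

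Without further spectral input, $k$ can be as large as the number of desymmetrized classes on a Neumann shell, which is unbounded in the spectral level. Nothing in your argument gives almost-orthogonality of the differences $\phi_j^\sigma-\phi_j^0$, which is what would remove the factor $\sqrt k$.

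The missing ingredient is the small-parameter modal separation of Rudnick and Wigman (Proposition~\ref{prop:small_robin_modal_separation}). For $0<\sigma<\sigma_{\mathrm{sep}}$, distinct desymmetrized classes have distinct Robin eigenvalues. By Lemma~\ref{lem:small-robin-complete-eigenspace}, each complete eigenspace is then a single modal space $\mathcal R_{m,n}(\sigma)$, of dimension at most two, spanned by the orthogonal symmetric and antisymmetric modes. So $k\le2$, and your Step~2 becomes the uniform gap $d(\mathcal R_{m,n}(\sigma),\mathcal N_{m,n})\le\sqrt2\,C\sqrt\sigma$ of Lemma~\ref{lem:uniform_modal_stability}. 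Step~4 is then exactly Lemma~\ref{lem:observation_stability}.

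The fixed-parameter bound of Theorem~\ref{thm:global_robin_complexity} cannot substitute for this. It is not uniform as $\sigma\downarrow0$: its splitting coefficient $A_\sigma$ is of order $\sigma^2$, and its thresholds are not tracked in $\sigma$.

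\textbf{Step~3.} The obstacle you identify here is not real. Proposition~\ref{prop:robin_neumann_gap} gives the uniform displacement $0<\Lambda_{m,n}(\sigma)-\Lambda_{m,n}(0)<4\sigma/r$, and the Neumann levels lie in $\frac{4\pi^2}{27r^2}\mathbb Z$. Together these force every class contributing to one Robin eigenvalue onto a single Neumann shell once $\sigma<\pi^2/(27r)$. So $w$ already lies in one Neumann eigenspace, and Theorem~\ref{thm:neumann_unl} applies verbatim; no annulus version of the clustering argument is needed.

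Two smaller corrections:
\begin{itemize}
\item The displacement bound does not follow from form comparison and the trace inequality, which only give a shift of order $\sigma\lambda^{1/2}$. It is a special fact for the triangle.
\item In Step~1, the $\sqrt\sigma$ regime is not confined to the lowest modes. It persists along the whole edge family $m=0$, where $M\approx\sqrt{h/2}$ for every $n$. For $m\ge1$ the shifts are $O(\sigma)$ but do not decay in $|\xi^0|$ (take $m=1$, $n\to\infty$). The uniform $O(\sqrt\sigma)$ bound you actually need is nevertheless correct; it is Lemma~\ref{lem:uniform-secular-parameters}.
\end{itemize}
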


The essential point in the inequality (\ref{eq:small_robin_uniform}) is the
simultaneous uniformity in both $\lambda$ and $\sigma$. Continuity of
a fixed eigenbranch at $\sigma=0$ is not sufficient for this
conclusion.

\subsection{Arbitrary Robin parameters}
\label{subsec:arbitrary_robin}

We next fix $\sigma>0$. Distinct modal classes may then contribute to
the same Robin eigenvalue. It is therefore necessary to bound, uniformly
over the spectrum, the number of modal classes contained in a single
eigenspace.

For
   $ \lambda\in\operatorname{Spec}(-\Delta_\sigma)$,
let
$
    \mathcal{I}_\sigma(\lambda)
$
denote the set of distinct Robin modal index classes that contribute to
$E_{\lambda,\sigma}$, where modal representatives related by the
systematic symmetries described in
\Cref{sec:robin_structure} are identified. Define
\begin{equation}
\kappa_\sigma(\lambda)
:=
\#\mathcal{I}_\sigma(\lambda)
\label{eq:kappa}
\end{equation}
and
\begin{equation}
K_\sigma
:=
\sup_{\lambda\in\operatorname{Spec}(-\Delta_\sigma)}
\kappa_\sigma(\lambda).
\label{eq:Ksigma}
\end{equation}
The quantity $K_\sigma$ concerns the global coincidence structure of
the Robin spectrum. It is not simply the multiplicity of $\lambda$:
$\kappa_\sigma(\lambda)$ counts distinct modal index classes after the
systematic symmetry multiplicities within each class have been factored
out.

\begin{theorem}[Global bounded coincidence complexity]
\label{thm:global_robin_complexity}
Fix $\sigma>0$. There exist constants $B_\sigma,N_\sigma<\infty$,
independent of the spectral level, such that every Robin eigenvalue
receives contributions from at most $B_\sigma$ desymmetrized modal
classes over each Neumann shell and from at most $N_\sigma$ Neumann
shells. In particular,
\begin{equation}
    K_\sigma\le B_\sigma N_\sigma<\infty.
\label{eq:global_Ksigma_finite}
\end{equation}
\end{theorem}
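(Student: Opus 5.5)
The proof has four parts: a parametrization of the Robin spectrum by modal classes, a uniform bound on the Robin--Neumann displacement, a count of the Neumann shells a Robin eigenvalue can reach (this gives $N_\sigma$), and strict ordering of the classes on a single shell (this gives $B_\sigma$). The final bound $K_\sigma\le B_\sigma N_\sigma$ is then just a product count.

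\textbf{Parametrization.} I would use McCartin's description of the Robin eigenstructure, recalled in \Cref{sec:robin_structure}. Each desymmetrized modal class is labelled by a pair $(m,n)$ of nonnegative integers, modulo the systematic symmetries. Its Neumann reference level is
\[
\lambda^N_{m,n}=\tfrac{16\pi^2}{9L^2}\,(m^2+mn+n^2).
\]
The Robin branch $\lambda^\sigma_{m,n}$ is obtained by replacing the integer wave components with perturbed components $m+\delta_m$, $n+\delta_n$, $\ldots$, which solve the coupled secular equations. A Neumann shell is the set of classes sharing the same value of the integer quadratic form $Q(m,n)=m^2+mn+n^2$.

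\textbf{Uniform displacement and the shell count $N_\sigma$.} First I would show that the secular equations, of the form $\tan(\cdot)=\sigma/(\cdot)$, have a unique small root in each relevant window. This gives $0\le\delta\lesssim\sigma/(\text{component})$, and hence
\[
0\le\lambda^\sigma_{m,n}-\lambda^N_{m,n}\le C_\sigma \quad\text{uniformly in }(m,n).
\]
Since $Q$ is integer-valued, distinct Neumann shells are separated by at least $\tfrac{16\pi^2}{9L^2}$. So any Robin eigenvalue $\lambda$ can only receive contributions from shells with $\lambda^N\in[\lambda-C_\sigma,\lambda]$, and the number of such shells is at most $N_\sigma:=1+\lfloor 9L^2C_\sigma/(16\pi^2)\rfloor$.

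\textbf{Same-shell splitting, $B_\sigma$ (main obstacle).} Within one shell the number of classes is unbounded, so I must show that distinct classes on a common shell almost never share a Robin eigenvalue.
\begin{itemize}[leftmargin=*]
\item I would expand the displacement to first order in $\sigma$, uniformly in the indices:
\[
\lambda^\sigma_{m,n}-\lambda^N_{m,n}=\sigma\,\Phi(m,n)+\mathcal E_{m,n},
\]
where $\Phi$ is an explicit sum of the form $\sum_j k/(\text{component}_j)$, up to constants. The key requirement is an error bound with a gain, $|\mathcal E_{m,n}|\le C\sigma^2/k^{2}$ or at least $o(1/k)$, where $k^2=\lambda^N_{m,n}$.
\item I would parametrize the shell by its angle $\theta$. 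Distinct lattice points on a circle of radius $\asymp k$ are separated by at least $1$, so their angles are separated by $\gtrsim 1/k$.
\item I would show that $\Phi$, as a function of $\theta$, is strictly monotone on each fundamental sector with $|\partial_\theta\Phi|\gtrsim 1$. This follows because the terms $1/\text{component}$ have nonvanishing derivative away from the sector edges.
\item Together these give $|\Phi(m,n)-\Phi(m',n')|\gtrsim 1/k$ for distinct classes, which beats the error, so the Robin branches on one shell are strictly ordered.
\end{itemize}
The delicate regions are near the sector edges, where a component is small ($m$ or $n$ bounded). There I would treat the small components separately with the exact secular equation rather than the expansion. Those classes contribute only boundedly many branches per shell, plus finitely many low-index exceptional configurations. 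Collecting everything gives a constant $B_\sigma$, depending on $\sigma$, bounding the number of classes from one shell that meet at a given $\lambda$.

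I expect the uniform second-order error estimate $\mathcal E_{m,n}=o(1/k)$ to be the hardest step. It must hold uniformly over all directions, including near-degenerate ones, and it comes from the coupled (nonseparable) secular system. I would try first to decouple that system at high frequency by an implicit-function argument in the variables $(\delta_m,\delta_n)$.
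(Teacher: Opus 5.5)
Your shell count $N_\sigma$ and the final product bound $K_\sigma\le B_\sigma N_\sigma$ follow the paper's route. The paper takes $0<\Lambda_{m,n}(\sigma)-\Lambda_{m,n}(0)<4\sigma/r$ from Rudnick--Wigman. Your own derivation via $\delta\lesssim\sigma/\text{component}$ breaks down for the $m=0$ family: there the shift is bounded only by cancellation, not by smallness of each $\delta$.

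\textbf{The first-order mechanism produces no splitting.} The functional $\Phi$ you propose to exploit is \emph{constant} on every shell. Write $m_1=m$, $m_2=n$, $m_3=-(m+n)$ and $\mu_1=\mu$, $\mu_2=\nu$, $\mu_3=-\mu-\nu$. Then $\mu^2+\mu\nu+\nu^2=\tfrac12\sum_j\mu_j^2$ and $\mu_j-m_j=(3M_j-\sum_iM_i)/\pi$, so the linear part of the shift is $\tfrac3\pi\sum_j m_jM_j$. Since $M_j=\arctan\bigl(h/(\pi\mu_j)\bigr)\approx h/(\pi m_j)$, this equals $9h/\pi^2$, which after normalization is exactly $4\sigma/r$ for every class with $m\ge1$. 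The factors $1/\text{component}_j$ are cancelled by the gradient weights $m_j$ of the quadratic form. So there is no angular dependence, and $|\partial_\theta\Phi|\gtrsim1$ is false.

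All of the splitting sits in what you call $\mathcal E$. The paper's fixed-$\sigma$ expansion is $\Lambda_{m,n}(\sigma)=\Lambda_{m,n}(0)+\frac{4\sigma}{r}-A_\sigma F(m,n)+\dots$, with $A_\sigma=4\sigma^2(1+r\sigma)/\pi^2$ and $F=m^{-2}+n^{-2}+(m+n)^{-2}$. This angular term has size $\sigma^2k^{-2}$, exactly the order you planned to discard. Because $\sigma$ is arbitrary rather than small, the right tool is an expansion in inverse frequency at fixed $\sigma$, not a Taylor expansion in $\sigma$. The paper gets it from the analytic implicit function theorem in $\rho=R^{-2}$, using that $\sqrt\rho\arctan\bigl(h\sqrt\rho/(\pi z)\bigr)$ is analytic in $\rho$. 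It also proves a separate edge version uniform in $n/m$.

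\textbf{The diagonal is a second obstruction you did not anticipate.} On the shell, $F=R^{-2}f(x)$ with $x=m/R$ and $f(x)=x^{-2}(1-x^2)^{-2}$. This is strictly decreasing on $(0,1/\sqrt3)$, but $f'(1/\sqrt3)=0$ at the diagonal $m=n$, forced by the $m\leftrightarrow n$ symmetry. The diagonal is an edge of your fundamental sector. Near it, lattice spacing $1/R$ only gives $f(x)-f(x')\gtrsim R^{-2}$, so the leading gap is $\gtrsim R^{-4}$. That is the same order as the next term $R^{-4}g(x)$, so no error bound of type $o(1/k)$, or even $O(k^{-4})$, closes the argument.

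The paper handles this as follows:
\begin{itemize}
\item It expands through order $R^{-4}$ with an $O(R^{-6})$ remainder.
\item It shows $g$ is induced by a symmetric function, so $g'(1/\sqrt3)=0$.
\item It applies Lemma~\ref{lem:stationary_profile_comparison} to get $|g(x')-g(x)|\le C\bigl(f(x)-f(x')\bigr)$.
\item In the small-$m$ edge regime it uses $\Delta F\gtrsim m^{-3}$ against an $O(m^{-4})$ remainder.
\end{itemize}
Your step producing $B_\sigma$ has to be rebuilt around this second-order angular profile; the first-order-in-$\sigma$ mechanism cannot produce it.
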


For later reference, define the fixed-parameter observation constant
by
\begin{equation}
c_V(\sigma)
:=
\inf_{\lambda\in\operatorname{Spec}(-\Delta_\sigma)}
\inf_{0\neq u\in E_{\lambda,\sigma}}
\frac{\|u\|_{L^2(V)}^2}
{\|u\|_{L^2(T)}^2}.
\label{eq:fixed_robin_constant}
\end{equation}

\begin{theorem}[Uniform non-localization for every fixed Robin parameter]
\label{thm:fixed_robin}
Fix $\sigma>0$. Then, for every measurable set $V\subset T$ with
$|V|>0$, there exists a constant $c_{V,\sigma}>0$ such that
\begin{equation}
\|u\|_{L^2(V)}^2
\geq
c_{V,\sigma}\,\|u\|_{L^2(T)}^2
\label{eq:fixed_robin_unl}
\end{equation}
for every
$\lambda\in\operatorname{Spec}(-\Delta_\sigma)$ and every
$0\neq u\in E_{\lambda,\sigma}$. Equivalently,
\begin{equation}
c_V(\sigma)>0.
\label{eq:fixed_robin_positive}
\end{equation}
\end{theorem}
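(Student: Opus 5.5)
The plan is to deduce Theorem~\ref{thm:fixed_robin} from Theorem~\ref{thm:global_robin_complexity} through Corollary~\ref{cor:modal_complexity_observation}, taking $\Omega=T$ and $\Sigma=\operatorname{Spec}(-\Delta_\sigma)$.

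Two structural inputs are needed. First, every eigenspace should decompose as a sum of modal spaces,
\[
E_{\lambda,\sigma}=\sum_{\mathfrak c\in\mathcal I_\sigma(\lambda)}\mathcal R_{\mathfrak c},
\]
where $\mathcal R_{\mathfrak c}$ is spanned by the Robin eigenfunctions in the desymmetrized class $\mathfrak c$ with eigenvalue $\lambda$. Second, $\mathfrak F_T(\mathcal R_{\mathfrak c})\le N_{\mathrm{mod}}$ for an absolute constant $N_{\mathrm{mod}}$.

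For the decomposition, I would rely on McCartin's completeness of the Robin eigenstructure on $T$, as recorded in \Cref{sec:robin_structure}. The Robin eigenfunctions, symmetric and antisymmetric, form a complete orthogonal family. Hence the eigenspace at $\lambda$ is exactly the span of those family members whose eigenvalue equals $\lambda$. Grouping them by modal index class gives the decomposition, with $\#\mathcal I_\sigma(\lambda)=\kappa_\sigma(\lambda)\le K_\sigma$.

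For the Fourier bound, I would use the explicit form of each Robin mode. Each is a finite sum of products of trigonometric functions in the three barycentric-type coordinates. Its phases come from a bounded number of reflection images of a single wave vector: the six elements of the reflection group, each carrying $\cos$ and $\sin$ components. Writing these as complex exponentials shows that each modal class lies in the span of a fixed number of plane waves, for example at most $12$ per symmetry type and perhaps $24$ per class. The constant phase shifts coming from the secular equations only alter coefficients, not the frequencies. The frequencies themselves, obtained from real roots of the secular system, can be arbitrary and need not be separated, and Lemma~\ref{lem:finite_complexity_observation} permits exactly that.

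With both inputs in place, Corollary~\ref{cor:modal_complexity_observation} gives
\[
\mathfrak F_T(E_{\lambda,\sigma})\le N_{\mathrm{mod}}K_\sigma\le N_{\mathrm{mod}}B_\sigma N_\sigma
\]
uniformly in $\lambda$. It also gives \eqref{eq:fixed_robin_unl} with
\[
c_{V,\sigma}=\gamma_{T,V,N_{\mathrm{mod}}B_\sigma N_\sigma}>0.
\]
This constant depends on $\sigma$ only through the complexity bound, consistent with the quantifiers.

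I expect the main obstacle to be the bookkeeping that links $\kappa_\sigma(\lambda)$ to the actual eigenspace. Two points must hold: no eigenfunction outside McCartin's family can occur, which is the completeness statement, and each class contributes only boundedly many independent functions at a given eigenvalue. If a class could carry degenerate roots of the secular system yielding extra functions, I would check that such functions still share the same finite set of reflected frequencies. Then the plane-wave count remains bounded regardless of multiplicity within the class, and only the number of classes needs control, which Theorem~\ref{thm:global_robin_complexity} supplies.
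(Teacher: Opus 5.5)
Your proposal is correct and is essentially the paper's own argument. The paper combines $K_\sigma<\infty$ from Theorem~\ref{thm:global_robin_complexity} with Lemma~\ref{lem:robin-modal-class-complexity}, which packages your two structural inputs (McCartin completeness giving the modal decomposition, and a uniformly bounded plane-wave expansion of each mode), and then applies Corollary~\ref{cor:modal_complexity_observation} to obtain $c_{V,\sigma}=\gamma_{T,V,N_{\mathrm{mod}}K_\sigma}$; your hedge about degenerate secular roots is not needed there, since McCartin's parametrization gives a unique phase triple for each admissible $(m,n)$.
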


The constant in Theorem \ref{thm:fixed_robin} may depend on $\sigma$.
In particular, no uniformity with respect to the Robin parameter is
asserted for arbitrary $\sigma>0$. This distinguishes
Theorem \ref{thm:fixed_robin} from the small-Robin result, where the
observation constant is uniform for
$0\leq\sigma\leq\sigma_0$.

\subsection{A second geometry: the Robin square}
\label{subsec:square_main_results}

Let $S=(0,1)^2$ and impose the Robin condition
$\partial_\nu u+\sigma u=0$ on $\partial S$.  For a Robin eigenvalue
$\Lambda$ of $S$, let $\kappa_\sigma^{\square}(\Lambda)$ denote the
number of distinct unordered modal classes $\{n,m\}$ contributing to its
eigenspace, and set
\begin{equation}
 K_\sigma^{\square}
 :=
 \sup_{\Lambda\in\operatorname{Spec}(-\Delta_{\sigma,S})}
 \kappa_\sigma^{\square}(\Lambda).
\label{eq:square_Ksigma_def}
\end{equation}

\begin{theorem}[Fixed-parameter Robin non-localization on the square]
\label{thm:square_fixed_robin}
For every fixed $\sigma>0$,
\begin{equation}
 K_\sigma^{\square}<\infty.
\label{eq:square_global_complexity_main}
\end{equation}
Consequently, for every measurable $V\subset S$ with $|V|>0$, there exists
$c_{V,\sigma}^{\square}>0$ such that
\begin{equation}
 \|u\|_{L^2(V)}^2
 \geq
 c_{V,\sigma}^{\square}\|u\|_{L^2(S)}^2
\label{eq:square_observation_main}
\end{equation}
for every Robin eigenvalue of $S$ and every $u$ in the corresponding
eigenspace.
\end{theorem}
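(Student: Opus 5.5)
The plan is to treat the Robin square as a separable problem and reduce both assertions to one-dimensional Robin spectral data. Then Corollary~\ref{cor:modal_complexity_observation} converts the complexity bound into observation.

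\textbf{Separation of variables.} On $(0,1)$ with $u'(0)=\sigma u(0)$ and $u'(1)=-\sigma u(1)$, the Robin eigenfunctions are $\phi_n(x)=k_n\cos(k_nx)+\sigma\sin(k_nx)$. The wave numbers $k_n$ are the roots of the secular equation $\tan k=2\sigma k/(k^2-\sigma^2)$, one for each $n\ge 0$, with $k_n\in(n\pi,(n+1)\pi)$. The two-dimensional eigenfunctions are $\phi_n(x)\phi_m(y)$ with eigenvalue $\Lambda=k_n^2+k_m^2$. Each modal class $\{n,m\}$ spans a space $\mathcal R_{\{n,m\}}$ of dimension at most $2$ lying in the span of the $8$ plane waves $e^{i(\pm k_n x\pm k_m y)}$, so $\mathfrak F_S(\mathcal R_{\{n,m\}})\le 8$.

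\textbf{One-dimensional asymptotics.} Iterating the secular equation gives
\[
k_n=n\pi+\frac{2\sigma}{n\pi}+\frac{c_\sigma}{n^3}+O(n^{-5}),
\]
with the expansion valid for $n\ge n_0(\sigma)$. Hence
\[
k_n^2=\pi^2n^2+4\sigma+\frac{\delta_\sigma}{n^2}+O(n^{-4}),
\]
where $\delta_\sigma$ is explicit. This yields
\[
\Lambda_{n,m}=\pi^2(n^2+m^2)+8\sigma+\frac{\delta_\sigma}{n^2}+\frac{\delta_\sigma}{m^2}+\text{(error)}.
\]
Indices below $n_0$ are handled separately: there $k_n^2-\pi^2n^2$ is bounded uniformly by some $C_\sigma$.

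\textbf{Shell bound $N_\sigma$.} We have $|\Lambda_{n,m}-\pi^2(n^2+m^2)|\le C'_\sigma$ uniformly in $(n,m)$, and Neumann shells $\pi^2 s$ with $s=n^2+m^2$ are integer-spaced by $\pi^2$. Therefore a fixed $\Lambda$ can only be hit from at most $N_\sigma\le 2C'_\sigma/\pi^2+1$ shells.

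\textbf{Same-shell bound $B_\sigma$: the main obstacle.} The number of representations $s=n^2+m^2$ is unbounded, so same-shell coincidences must be ruled out directly. The approach is to order the classes within a shell by their Robin correction.

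For two pairs $\{n,m\}\neq\{n',m'\}$ on the same shell with all indices $\ge n_0$, the difference is
\[
\Lambda_{n,m}-\Lambda_{n',m'}=\delta_\sigma\bigl(n^{-2}+m^{-2}-n'^{-2}-m'^{-2}\bigr)+O(\min^{-4}).
\]
On the circle $n^2+m^2=s$, the function $f(n)=n^{-2}+(s-n^2)^{-1}$ is strictly monotone in $n$ on each side of the diagonal. The spacing of lattice points on the circle should make the main term dominate the $O(n^{-4})$ error once the smaller index is large.

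The delicate part is pairs with very different scales, for instance $n$ small and $m\approx\sqrt s$. Here I would not use the asymptotic in $n$ at all. Instead, note that $k_n^2-\pi^2n^2$ for the finitely many $n<n_0$ takes finitely many values, while $k_m^2-\pi^2m^2\to 4\sigma$. So a coincidence forces $k_n^2-\pi^2n^2-(k_{n'}^2-\pi^2n'^2)$ to be nearly equal to a quantity tending to $0$. Strict monotonicity of $k_n^2-\pi^2n^2$ in $n$, which I would check from the secular equation, then gives at most boundedly many coincidences per value. I also need to verify that $\delta_\sigma\neq0$ (expected: $\delta_\sigma=-\tfrac{4}{3}\sigma^2$-type), since that drives the strict ordering; this step is the one most likely to need care.

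Combining the two bounds gives $K^\square_\sigma\le B_\sigma N_\sigma<\infty$. Corollary~\ref{cor:modal_complexity_observation}, applied with $N_{\mathrm{mod}}=8$ and $K=K^\square_\sigma$, then gives the observation estimate \eqref{eq:square_observation_main} with $c^\square_{V,\sigma}=\gamma_{S,V,8K^\square_\sigma}$.
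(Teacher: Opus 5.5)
Your architecture is the paper's: separation of variables, ordering the classes on a shell $n^2+m^2=R^2$ by $F_\square=n^{-2}+m^{-2}$, a count of low-index classes, a uniform displacement bound that limits the number of shells, $\mathfrak F_S\le 8$ per class, and Corollary~\ref{cor:modal_complexity_observation}. The gap is the step you leave as ``should'': that lattice spacing makes $\delta_\sigma\Delta F$ dominate your $O(n^{-4})$ error.

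At the edge ($n<\delta R$) this holds, since $\Delta F\gtrsim n^{-3}$. In the bulk ($n\ge\delta R$) your error is $O(R^{-4})$. The shell profile $f(x)=x^{-2}+(1-x^2)^{-1}$, with $x=n/R$, is stationary at the diagonal. So the obvious input, $x'-x\ge R^{-1}$ plus convexity of $f$, only gives $\Delta F=R^{-2}(f(x)-f(x'))\gtrsim R^{-4}$. The paper's spacing bound $P'-P\ge(2n+1)(2n'+1)$, with $P=n^2m^2$, gives the same order. The main term and the error are then of the same order, and near the diagonal no strict ordering follows.

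The paper closes this by expanding one order further: $k_n^2=\pi^2n^2+4\sigma-A^\square_\sigma n^{-2}+B^\square_\sigma n^{-4}+O(n^{-6})$. On the shell, $n^{-4}+m^{-4}=F_\square^2-2F_\square/R^2$, so the quartic terms contribute only $\Delta F\cdot O(n^{-2})$. The gap $\Delta F\gtrsim R^{-4}$ then has only an $O(R^{-6})$ remainder to beat.

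Alternatively, you can keep your truncation, but then you must prove a sharper arithmetic gap. For $n<n'\le m'<m$ on one shell, put $a=n'-n$, $b=m-m'$, $e=m-n'$; note $e\ge b$. The shell equation becomes $2n(a-b)=2be-(a-b)^2$. This rules out $a=b$, and in both remaining cases it yields $ae>n$. Since $P'-P=ae(n+n')(m+n')$, you get $\Delta F\gtrsim R^{-3}$ in the bulk, which does absorb $O(R^{-4})$.

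Finally, the case you flagged as delicate (one index below $n_0$) needs no argument. Each $n<n_0$ determines at most one $m$ on a shell. So $B_\sigma\le n_0+1$ as soon as the classes with $\min\{n,m\}\ge n_0$ are strictly ordered. The monotonicity of $k_n^2-\pi^2n^2$ is not needed.
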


The square exhibits the same arithmetic shell structure as the rational
rectangle family, with the additional exchange symmetry
$n\leftrightarrow m$.
For a rectangle
\[
    R_{a,b}=(0,a)\times(0,b),
    \qquad
    \frac{a^2}{b^2}\in\mathbb Q,
\]
let $\kappa_{\sigma,a,b}^{\mathrm{rect}}(\Lambda)$ denote the number of
distinct modal classes contributing to the eigenspace associated with
$\Lambda$, and set
\begin{equation}
    K_{\sigma,a,b}^{\mathrm{rect}}
    :=
    \sup_{\Lambda\in
    \operatorname{Spec}(-\Delta_{\sigma,R_{a,b}})}
    \kappa_{\sigma,a,b}^{\mathrm{rect}}(\Lambda).
\label{eq:rectangle_Ksigma_def}
\end{equation}

\begin{theorem}[Fixed-parameter Robin non-localization on rational rectangles]
\label{thm:rational_rectangle_fixed_robin}
Let
\[
    R_{a,b}=(0,a)\times(0,b),
    \qquad
    \frac{a^2}{b^2}\in\mathbb Q.
\]
For every fixed $\sigma>0$,
\begin{equation}
    K_{\sigma,a,b}^{\mathrm{rect}}<\infty.
\label{eq:rectangle_global_complexity_main}
\end{equation}
Consequently, for every measurable set $V\subset R_{a,b}$ with $|V|>0$,
there exists $c_{V,\sigma,a,b}>0$ such that
\begin{equation}
    \|u\|_{L^2(V)}^2
    \geq
    c_{V,\sigma,a,b}
    \|u\|_{L^2(R_{a,b})}^2
\label{eq:rational_rectangle_observation_main}
\end{equation}
for every Robin eigenvalue of $R_{a,b}$ and every $u$ in the corresponding
eigenspace.  The constants are not asserted to be uniform in $\sigma$ or
in the aspect ratio.
\end{theorem}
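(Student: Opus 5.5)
We prove \Cref{thm:rational_rectangle_fixed_robin} by separation of variables, followed by the shell-splitting strategy used for \Cref{thm:global_robin_complexity}. The observation estimate then follows from \Cref{cor:modal_complexity_observation}.

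\textbf{Modal structure.} The Robin problem on $R_{a,b}$ separates. Its eigenfunctions are products $\phi_n^{(a)}(x)\phi_m^{(b)}(y)$ with eigenvalue $\Lambda_{n,m}=\mu_n^{(a)}+\mu_m^{(b)}$. Here $\mu_n^{(a)}=k_n^2$ are the one-dimensional Robin eigenvalues on $(0,a)$, determined by the secular equation $\tan(k a)=2\sigma k/(k^2-\sigma^2)$. Each factor $\phi_n^{(a)}(x)=k_n\cos(k_nx)+\sigma\sin(k_nx)$ is a combination of two plane waves. Hence each modal class $\mathcal R_{(n,m)}$ is spanned by at most four plane waves, so $N_{\mathrm{mod}}\le4$, uniformly in $(n,m)$. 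By \Cref{cor:modal_complexity_observation}, it therefore suffices to prove $K^{\mathrm{rect}}_{\sigma,a,b}<\infty$.

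\textbf{One-dimensional asymptotics.} For fixed $\sigma$, the secular equation gives
\[
k_n=\frac{\pi n}{a}+\frac{2\sigma}{\pi n}+O(n^{-3})
\]
for $n\ge1$, and hence
\[
\mu_n^{(a)}=\Bigl(\frac{\pi n}{a}\Bigr)^2+\frac{4\sigma}{a}+O(n^{-2}).
\]
Uniformly in $n$ we have $0\le\mu_n^{(a)}-(\pi n/a)^2\le C_\sigma$, and the $n=0$ mode behaves separately, with $\mu_0\approx 2\sigma/a$ for small $\sigma a$ and bounded in general.

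\textbf{Bounded number of shells.} Write $a^2/b^2=p/q$ in lowest terms. The Neumann levels are $\frac{\pi^2}{a^2q}(qn^2+pm^2)$, so they lie on a lattice of spacing $\pi^2/(a^2q)$. Since $|\Lambda_{n,m}-\Lambda^N_{n,m}|\le 2C_\sigma$ uniformly, any Robin eigenvalue $\Lambda$ can only receive contributions from Neumann shells $\{qn^2+pm^2=\ell\}$ with $\ell$ in a window of width $O_\sigma(1)$. That gives at most $N_\sigma$ shells.

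\textbf{Bounded coincidences within a shell.} This is the main obstacle. On a fixed shell $qn^2+pm^2=\ell$, we expand the Robin correction to second order:
\[
\Lambda_{n,m}-\Lambda^N_{n,m}=\frac{4\sigma}{a}+\frac{4\sigma}{b}-\frac{c_a}{n^2}-\frac{c_b}{m^2}+O(n^{-4}+m^{-4}),
\]
with explicit constants $c_a,c_b>0$ depending on $\sigma,a,b$. Coincidences on the shell are therefore governed by the profile $f(n,m)=c_a n^{-2}+c_b m^{-2}$. Parametrize the shell by $n=\sqrt{\ell/q}\cos\theta$ and $m=\sqrt{\ell/p}\sin\theta$. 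Then $f=\ell^{-1}g(\theta)$, where
\[
g(\theta)=\frac{qc_a}{\cos^2\theta}+\frac{pc_b}{\sin^2\theta}
\]
is strongly convex on $(0,\pi/2)$ with a unique interior minimum.

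Consecutive lattice points on the shell have angular spacing at least of order $\ell^{-1/2}\cdot(\text{shell radius})^{-1}\sim\ell^{-1}$. Away from the minimum, $|g'|$ is bounded below, so $g$ is strictly monotone on each side of the minimum and distinct admissible points produce corrections differing by at least of order $\ell^{-2}$. We must show that this difference beats the remainder $O(n^{-4}+m^{-4})$. That holds in the bulk region where $n,m\gtrsim\ell^{1/2}$, where the remainder is $O(\ell^{-2})$ with a small constant only after a third-order expansion. I would try to push the one-dimensional asymptotic expansion one more order, so that the remainder becomes $O(\ell^{-3})$.

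Near the minimum, strong convexity means that level sets of $g$ contain at most two points on each side among points separated by $\gtrsim\ell^{-1}$ in angle. This bounds the coincidences there by an absolute constant.

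The edge regions, where $n$ or $m$ is $O(1)$ or $o(\ell^{1/2})$, are handled separately. There the correction $c_a/n^2$ is of order one and varies by a definite amount between neighbouring small $n$. Only boundedly many small indices can fall within the displacement window, and equality of the full correction can occur for only finitely many $n$ per level. Finitely many low-index configurations are absorbed into $B_\sigma$. Combining these gives at most $B_\sigma$ classes per shell, so $K^{\mathrm{rect}}_{\sigma,a,b}\le B_\sigma N_\sigma$.

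When $a=b$ (the square, \Cref{thm:square_fixed_robin}), we pass to unordered pairs. The profile becomes symmetric and the same argument applies.
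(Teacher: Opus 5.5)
The overall architecture of your proposal is the paper's: separation of variables with at most four plane waves per product mode, a uniform Robin--Neumann displacement combined with the arithmetic spacing of the reference levels to bound the number of contributing shells, and within a shell $qn^2+pm^2=J$ (your $\ell$; put $R=\sqrt J$) a strongly convex leading profile with a unique interior minimum, two monotone sectors, and a bounded low-index core.

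\textbf{Bulk regime.} The gap is in the within-shell step, which you rightly flag as the main obstacle; there your bookkeeping does not close.

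First, the spacing is misestimated. Distinct admissible points on the shell have distinct integers $n$. So in $x=n/R$, and in your $\theta$ since $|\Delta\theta|\ge|\Delta\cos\theta|$, they are separated by at least order $R^{-1}=J^{-1/2}$, not $J^{-1}$. With your $J^{-1}$, the leading gap near the stationary point is only $J^{-1}(\Delta\theta)^2\sim J^{-3}$. That is not beaten even by the $O(J^{-3})$ remainder you hope for after one more order. The appeal to level sets of your $g$ near the minimum does not help either, since coincidences concern the full perturbed eigenvalue, not the profile. With the correct spacing, strong convexity and $h'(x_*)=0$ give $|h(x)-h(x')|\ge\frac{\mu_{\sigma,a,b}}{2}(x'-x)^2$ within a sector. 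Hence the leading gap is $\gtrsim R^{-2}(x'-x)^2\ge R^{-4}$ uniformly, including at the minimum.

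Second, pushing the expansion one order further introduces the explicit term $B_{\sigma,a}n^{-4}+B_{\sigma,b}m^{-4}$, which is itself of size $R^{-4}$. You must show its differences are dominated. The paper does this with a Lipschitz bound $O(R^{-4}|x'-x|)$, whose ratio to $R^{-2}(x'-x)^2$ is $O(R^{-1})$, again by the spacing.

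\textbf{Edge regime.} Your edge argument covers only bounded $n$. For $N\le n\ll R$ the correction $c_a/n^2$ is small, not of order one. The claim that equality can hold for only finitely many $n$ per level is unjustified, and this intermediate range is exactly where an ordering argument is required.

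The paper uses monotonicity along the shell: $H_L'(t)\le -A_{\sigma,a}t^{-3}$ for $t<2\eta R$. This yields gaps $\gtrsim n^{-3}$, or $\gtrsim n^{-2}$ when $n'\ge 2n$. All remaining terms are $O(n^{-4})$ there because $m\gtrsim R$. Pairs straddling the edge/bulk overlap need the same comparison, and symmetrically near the other axis. Only after this is the exceptional set genuinely $\{\min\{n,m\}<N\}$, with at most $2N$ points per shell.

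As a side remark, once the spacing is corrected, your original second-order expansion already gives a bounded (though not strictly ordered) count in the bulk. A window of width $O(R^{-2})$ for $h$ has preimage of length $O(R^{-1})$ in each sector, and such an interval contains $O(1)$ admissible points.
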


Theorem~\ref{thm:rational_rectangle_fixed_robin} includes the square,
already covered by Theorem~\ref{thm:square_fixed_robin}.  For a rational
squared aspect ratio, the Neumann spectrum is organized by weighted
quadratic shells.  In the nonsquare case, the proof uses a discrete
splitting argument on these shells near the unique stationary point of
the leading Robin correction.

\begin{remark}
The rationality assumption on the squared aspect ratio is used to organize
the Neumann spectrum into exact weighted quadratic shells, which serve as
the reference sets for the splitting argument.  If
$a^2/b^2\notin\mathbb Q$, distinct modal pairs have distinct Neumann
eigenvalues, but no corresponding arithmetic shell decomposition is
available and the gaps between different reference levels need not admit
a uniform lower bound.  The Robin coincidence problem is then governed by
Diophantine properties of the aspect ratio rather than by splitting within
a fixed arithmetic shell.  We do not consider this case here.
\end{remark}

\subsection{Structure of the proof}
\label{subsec:proof_structure}

Arguments for different boundary conditions require different
treatments.

For the Dirichlet and Neumann problems, reflection unfolds each
eigenspace onto a fixed flat torus.  Although every reflection orbit is
finite, a spectral shell may contain arbitrarily many arithmetic orbits,
so that no uniform bound on the number of frequencies in a complete
eigenspace is available. Instead, the argument uses a Jarn\'ik-type
lattice lemma to decompose each shell into uniformly small frequency
clusters.  A two-dimensional Fourier-cluster estimate then gives the
required observation inequality.

For Robin boundary conditions with a small parameter, the Neumann estimate
is stable under perturbation.  McCartin's secular equations give
$O(\sqrt{\sigma})$ control of the Robin modal spaces, uniformly in the
modal indices, while the small-parameter spectral separation prevents
distinct desymmetrized classes from coinciding.  The observation estimate
is therefore stable under the resulting perturbation of the modal
subspaces, with a constant uniform for
$0\leq \sigma\leq \sigma_0$.

The situation is different when $\sigma>0$ is fixed. Here, distinct modal
classes may contribute to the same Robin eigenspace, and the problem is to
bound their number uniformly over the spectrum.  We first establish a
general shell-comparison principle which reduces this question to
high-frequency ordering within a reference shell, together with a uniform
bound on the displacement from the reference spectrum.

For the equilateral triangle, McCartin's secular equations admit, with
$\sigma$ fixed, a high-frequency expansion whose leading nonconstant
correction on a Neumann shell is a positive multiple of
\[
    F(m,n)
    =
    \frac{1}{m^2}
    +\frac{1}{n^2}
    +\frac{1}{(m+n)^2}.
\]
The ordering induced by this term, combined with separate estimates in
the bulk and near the edges of the shell, gives an eventual strict ordering
of the Robin eigenvalues within each Neumann shell. Thus, only a uniformly
bounded set of low transverse indices can contribute additional
coincidences.  The uniform displacement of the Robin spectrum from the
Neumann spectrum also bounds the number of Neumann shells that can
contribute to a single Robin eigenvalue.  It follows that
$K_\sigma<\infty$.

This spectral bound has a direct consequence for observation.  Each modal
class is contained in the span of a uniformly bounded number of plane
waves, and therefore $K_\sigma<\infty$ gives a uniform bound on the Fourier
complexity of every complete Robin eigenspace.  The multidimensional
Tur\'an--Nazarov inequality then yields the observation estimate on every
measurable set of positive measure.  No separation condition on the
frequencies is required.

The same principle applies to rectangles, although the spectral splitting
takes a different form.  On the square, separation of variables reduces
the problem to the one-dimensional Robin frequency sequence.  Its
fixed-$\sigma$ asymptotic expansion splits the sum-of-two-squares shells.
Together with the bounded exceptional part of the spectrum and the
uniform Robin--Neumann displacement, this gives
$K_\sigma^{\square}<\infty$.  Since each unordered modal class has
uniformly bounded plane-wave complexity, the measurable-set observation
estimate follows as above.

For a rectangle satisfying $a^2/b^2\in\mathbb Q$, write
$a^2=p\ell^2$ and $b^2=q\ell^2$.  The corresponding Neumann shells are
\[
    qn^2+pm^2=J.
\]
The leading Robin correction on such a shell is described by a strongly
convex angular profile with a unique interior minimum.  On either side
of this minimum, the profile is strictly monotone.  The spacing of the
admissible lattice points on the shell, together with separate estimates
near the two coordinate axes, gives at most two high-frequency
coincidences on each shell.  After the finitely many exceptional indices
are included, the number of modal pairs contributing to any Robin
eigenvalue is therefore uniformly bounded:
\[
    K_{\sigma,a,b}^{\mathrm{rect}}<\infty.
\]
Each separated product mode is a linear combination of at most four plane
waves, so the same Fourier-complexity argument gives an observation on every
measurable subset of a positive measure.

Thus, spectral multiplicity plays different roles in the three cases.
For Dirichlet and Neumann boundary conditions, the possibly unbounded
arithmetic multiplicity is treated directly by Fourier clustering.  Near
the Neumann endpoint, the observation estimate persists by perturbative
stability of the modal subspaces.  For a fixed positive Robin parameter,
spectral simplicity is replaced by a uniform bound on the number of modal
classes contained in a single eigenspace.  The triangle and rational
rectangles obtain this bound by different shell-splitting arguments,
showing that the observation principle depends on bounded spectral
complexity rather than on the particular separation structure of either
geometry.


\section{Dirichlet and Neumann non-localization}
\label{sec:dirichlet_neumann}

The proofs of Theorems~\ref{thm:dirichlet_unl} and
\ref{thm:neumann_unl} are based on the reflection structure of the
equilateral triangle.  Unfolding identifies each Dirichlet or Neumann
eigenfunction with a Laplace eigenfunction on the flat torus associated
with the triangular lattice.  At a fixed eigenvalue, the corresponding
Fourier frequencies lie on a single Euclidean circle.

The number of points of the hexagonal dual lattice on such a circle is
not uniformly bounded with respect to the eigenvalue.  Consequently,
the finite-exponential Tur\'an--Nazarov estimate used later for the
fixed-Robin problem does not give a uniform observation bound in this
setting, since its constant depends on the number of frequencies.
We instead work directly with complete toral eigenspaces.  A
Jarn\'ik-type decomposition of the lattice points into uniformly small
frequency clusters, followed by a two-dimensional Fourier-cluster
estimate, yields the required observation inequality.

\subsection{The triangular lattice and the unfolded torus}
\label{subsec:unfolding}

Let
\begin{equation}
v_1
=
\left(\frac{3L}{2},\frac{\sqrt{3}L}{2}\right),
\qquad
v_2
=
\left(0,\sqrt{3}L\right),
\label{eq:lattice_generators}
\end{equation}
and define the translation lattice of the affine reflection tiling by
\begin{equation}
\Lambda
=
\mathbb{Z}v_1+\mathbb{Z}v_2.
\label{eq:triangular_lattice}
\end{equation}
The vectors $v_1$ and $v_2$ are translations in the affine reflection
group generated by the three side reflections.  Its fundamental
parallelogram has area
\begin{equation}
|\det(v_1,v_2)|
=
\frac{3\sqrt{3}}{2}L^2
=
6|T|,
\label{eq:reflection_lattice_covolume}
\end{equation}
so it consists, up to boundary sets of measure zero, of six reflected
copies of the triangular alcove $T$.  We denote by
\begin{equation}
\mathbb{T}_{\Lambda}^{2}
=
\mathbb{R}^{2}/\Lambda
\label{eq:flat_torus}
\end{equation}
the corresponding flat torus.
The dual lattice is
\begin{equation}
\Lambda^{*}
=
\left\{
\xi\in\mathbb{R}^{2}:
\xi\cdot a\in2\pi\mathbb{Z}
\text{ for every }a\in\Lambda
\right\}.
\label{eq:dual_lattice}
\end{equation}
A convenient dual basis, characterized by
$b_j\cdot v_k=2\pi\delta_{jk}$, is
\begin{equation}
b_1
=
\left(\frac{4\pi}{3L},0\right),
\qquad
b_2
=
\left(-\frac{2\pi}{3L},\frac{2\sqrt{3}\pi}{3L}\right),
\label{eq:dual_basis}
\end{equation}
so that
\[
\Lambda^{*}
=
\mathbb{Z}b_1+\mathbb{Z}b_2.
\]
For $\xi\in\Lambda^{*}$, the function
\[
e_{\xi}(x)=e^{i\xi\cdot x}
\]
is a toral eigenfunction satisfying
\begin{equation}
-\Delta e_{\xi}
=
|\xi|^{2}e_{\xi}.
\label{eq:torus_plane_wave}
\end{equation}
Consequently, the eigenspace of the toral Laplacian corresponding to
$\lambda\geq0$ is
\begin{equation}
\mathcal{H}_{\lambda}
=
\operatorname{span}
\left\{
e^{i\xi\cdot x}:
\xi\in\Lambda^{*},
\ |\xi|^{2}=\lambda
\right\}.
\label{eq:torus_eigenspace}
\end{equation}
In lattice coordinates, if
\[
\xi=mb_1+nb_2,
\qquad m,n\in\mathbb{Z},
\]
then
\begin{equation}
|\xi|^{2}
=
\frac{16\pi^{2}}{9L^{2}}
\left(m^{2}-mn+n^{2}\right).
\label{eq:hexagonal_quadratic_form}
\end{equation}
Thus, the multiplicity of a toral eigenvalue is governed by the number
of representations of an integer by the quadratic form
\[
Q(m,n)=m^{2}-mn+n^{2}.
\]

\begin{remark}
\label{rem:unbounded_multiplicity}
The cardinality of a level set of $Q$ is not uniformly bounded with
respect to the spectral level.  Thus a complete Dirichlet or Neumann
eigenspace need not have uniformly bounded Fourier complexity, even
though each individual reflection orbit is finite.  In particular, the
finite-exponential observation principle used for fixed positive Robin
parameter does not apply uniformly in this setting, since its constant
depends on the number of frequencies.

The argument below instead treats the complete toral eigenspace
$\mathcal{H}_{\lambda}$ directly.  The lattice points on each spectral
circle are decomposed into uniformly small frequency clusters, and the
resulting two-dimensional Fourier-cluster estimate is independent of
the total number of points on the circle.
\end{remark}

\subsection{Reflection extension}
\label{subsec:reflection_extension}

Let $r_j$, $j=1,2,3$, denote reflection across the three sides of
$T$. Successive reflections tile the plane by congruent copies of
$T$. A function on $T$ satisfying homogeneous Dirichlet or Neumann
boundary conditions can therefore be extended across each side by
odd or even reflection, respectively.

\begin{lemma}[Reflection extension]
\label{lem:reflection_extension}
Let $B\in\{D,N\}$ and let
\[
u\in E_{\lambda}^{B}.
\]
Then, there exists a nonzero function
\[
U\in\mathcal{H}_{\lambda}
\]
such that
\begin{equation}
U|_{T}=u.
\label{eq:extension_restriction}
\end{equation}
Moreover, $U$ is odd with respect to reflection across each side of
$T$ when $B=D$, and even with respect to reflection across each side
when $B=N$.
\end{lemma}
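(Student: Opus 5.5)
We will build $U$ by unfolding $u$ over the reflection tiling and then identify the result as a toral eigenfunction. Let $W$ be the affine Weyl group generated by $r_1,r_2,r_3$. Every point of $\mathbb R^2$ outside the union of reflecting lines lies in a unique alcove $w(T)$, $w\in W$. Let $\varepsilon:W\to\{\pm1\}$ be the sign character (the parity of the word length, well defined since $W$ is a Coxeter group) when $B=D$, and the trivial character when $B=N$. Set
\[
U(x):=\varepsilon(w)\,u(w^{-1}x),\qquad x\in w(T).
\]
This is well defined a.e., and $U|_T=u$. By construction, $U\circ r_j=\varepsilon(r_j)U$ for each side reflection of $T$, and indeed for the reflection in any wall of the tiling. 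Since $u\ne0$, the extension $U$ is nonzero.

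The next step is to show that $U$ is smooth on $\mathbb R^2$ and satisfies $-\Delta U=\lambda U$ there. Elliptic regularity on a convex polygon gives $u\in H^2(T)$, and $u$ is smooth up to the open sides because the boundary is flat there. Across an open side the two pieces match as follows. In the Dirichlet case they match in value, since $u=0$ and the odd reflection also vanishes, and in normal derivative, since the odd reflection preserves $\partial_\nu u$ up to the orientation flip. In the Neumann case the value matches trivially and the normal derivatives both vanish. Hence $U\in H^2_{\mathrm{loc}}$ away from the vertices, and $-\Delta U=\lambda U$ holds distributionally away from the vertex set.

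The vertices form a discrete set of points. Since $U\in H^1_{\mathrm{loc}}(\mathbb R^2)$ (it is even bounded), a point is removable for the equation in two dimensions: test against $\phi(1-\chi_\delta)$ with a logarithmic cutoff $\chi_\delta$ whose $H^1$ norm tends to $0$. It follows that $-\Delta U=\lambda U$ weakly on $\mathbb R^2$, and elliptic regularity makes $U$ real-analytic.

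Next we establish periodicity. Each $v\in\Lambda$ is a translation $t_v$ in $W$, and a translation is a product of an even number of reflections, so $\varepsilon(t_v)=1$. Since $U(t_v x)=\varepsilon(t_v)U(x)$ follows from the defining formula and $\varepsilon$ being a character, $U$ is $\Lambda$-periodic. Here one checks that $t_v$ really lies in $W$ for $v=v_1,v_2$. This can be done, for example, by writing $v_j$ as the composition of reflections in two parallel walls at distance $|v_j|/2$: the walls are lines at heights $0$ and $\sqrt3L/2$ for $v_2$, and $v_1$ is obtained by rotating this by $\pm60^\circ$.

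Thus $U$ is a smooth eigenfunction of the Laplacian on $\mathbb T^2_\Lambda$. Expanding $U$ in the Fourier basis $\{e_\xi\}_{\xi\in\Lambda^*}$ and applying \eqref{eq:torus_plane_wave}, only frequencies with $|\xi|^2=\lambda$ survive, so $U\in\mathcal H_\lambda$ by \eqref{eq:torus_eigenspace}.

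We expect the main obstacle to be the regularity across the walls and at the vertices, that is, justifying that the glued function is a genuine weak eigenfunction on $\mathbb R^2$. The cleanest route is to verify the weak identity $\int U(-\Delta\phi)=\lambda\int U\phi$ for $\phi\in C_c^\infty(\mathbb R^2)$ directly. Split the integral over the alcoves and apply Green's formula on each triangle, which is valid since $u\in H^2(T)$. The boundary terms then cancel pairwise across each shared edge by the parity relations above, and vertices contribute nothing because they have measure zero on the edges. This avoids the removable-singularity argument altogether.
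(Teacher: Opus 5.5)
Your proposal is correct and follows the paper's proof: unfold $u$ over the affine reflection group with the sign (determinant) character for $D$ and the trivial character for $N$, check the weak eigenvalue equation on $\mathbb R^2$ by summing over alcoves so that edge terms cancel, obtain $\Lambda$-periodicity because translations have trivial character, and conclude with the Fourier expansion on the torus. The differences are only technical. You use $H^2$ regularity on the convex triangle and Green's formula where the paper works in $H^1_{\mathrm{loc}}$, and you check explicitly that $t_{v_1},t_{v_2}\in W$ where the paper simply asserts that the translation subgroup is $\Lambda$. Your well-definedness step also implicitly uses the simple transitivity of $W$ on alcoves, which the paper states explicitly.
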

\begin{proof}
Let $W_{\mathrm{aff}}$ denote the affine reflection group generated by
the reflections $r_1,r_2,r_3$ in the three sides of $T$. The images
$w(T)$, $w\in W_{\mathrm{aff}}$, form the equilateral triangular tiling
of $\mathbb R^2$, and the action is simply transitive on the interiors
of the triangular alcoves.
Define
\[
    \chi_D(w):=\det(Dw)\in\{-1,1\},
    \qquad
    \chi_N(w):=1,
    \qquad
    w\in W_{\mathrm{aff}},
\]
where $Dw$ denotes the linear part of the affine isometry $w$. Since
each generating reflection has determinant $-1$, the character
$\chi_D$ records precisely the parity of the number of reflections and
is independent of the choice of a reflection word representing $w$.
For $B\in\{D,N\}$ define, on the interior of every reflected alcove,
\begin{equation}
    U(wx):=\chi_B(w)u(x),
    \qquad
    x\in T,
    \quad
    w\in W_{\mathrm{aff}}.
\label{eq:group_reflection_extension}
\end{equation}
Because distinct alcove interiors are disjoint, this defines $U$
unambiguously almost everywhere in $\mathbb R^2$.

The extensions across adjacent sides are compatible with the boundary
conditions. In the Dirichlet case, $u\in H_0^1(T)$ has vanishing trace
on each side, so the odd reflection belongs locally to $H^1$ across
the reflecting line. In the Neumann case, the even reflection belongs
locally to $H^1$, and the weak Neumann condition gives the corresponding
flux compatibility. Consequently,
\[
    U\in H^1_{\mathrm{loc}}(\mathbb R^2).
\]
Let $\varphi\in C_c^\infty(\mathbb R^2)$. Its support meets only
finitely many reflected triangles. Applying the weak eigenvalue
equation on each such triangle and summing, the contributions from
every common edge cancel pairwise, by odd reflection in the Dirichlet
case and by even reflection together with the Neumann condition in the
Neumann case. The vertices contribute no boundary term. Hence,
\[
    \int_{\mathbb R^2}\nabla U\cdot\nabla\overline\varphi\\,dx
    =
    \lambda\int_{\mathbb R^2}U\overline\varphi\\,dx.
\]
Thus, $-\Delta U=\lambda U$ in the sense of distributions on
$\mathbb R^2$. The standard interior elliptic regularity then implies that
$U$ is smooth in $\mathbb R^2$.

The translation subgroup of $W_{\mathrm{aff}}$ is the lattice
$\Lambda=\mathbb Zv_1+\mathbb Zv_2$ defined in
Eq. \ref{eq:triangular_lattice}. If $\tau_a(x)=x+a$ with $a\in\Lambda$,
then $D\tau_a=I$, and therefore
\[
    \chi_D(\tau_a)=1=\chi_N(\tau_a).
\]
Using Eq. (\ref{eq:group_reflection_extension}), we obtain
\[
    U(x+a)=U(x),
    \qquad a\in\Lambda.
\]
Hence, $U$ descends to an eigenfunction on the flat torus
$\mathbb T_\Lambda^2$.
Finally, Fourier expansion on $\mathbb T_\Lambda^2$ gives
\[
    U(x)
    =
    \sum_{\substack{\xi\in\Lambda^*\\|\xi|^2=\lambda}}
        \widehat U(\xi)e^{i\xi\cdot x}.
\]
Therefore, $U\in\mathcal H_\lambda$. Since $U|_T=u$ and $u\neq0$, the
extension is nonzero.
\end{proof}

\subsection{A toral observation estimate}
\label{subsec:toral_observation}

The analytic input needed below is an observation estimate for
individual eigenspaces of a two-dimensional flat torus.

\begin{proposition}[Toral eigenspace observation estimate]
\label{prop:torus_observation}
Let $\mathbb{T}_{\Lambda}^{2}$ be the flat torus defined in
Eq. (\ref{eq:flat_torus}), and let $\omega\subset\mathbb{T}_{\Lambda}^{2}$
be a nonempty open set. Then, there exists a constant
\[
C_{\omega}>0
\]
such that, for every toral eigenvalue $\lambda$ and every
$U\in\mathcal{H}_{\lambda}$,
\begin{equation}
\|U\|_{L^{2}(\mathbb{T}_{\Lambda}^{2})}^{2}
\leq
C_{\omega}
\|U\|_{L^{2}(\omega)}^{2}.
\label{eq:torus_observation}
\end{equation}
The constant $C_{\omega}$ is independent of $\lambda$ and of the
multiplicity of $\mathcal{H}_{\lambda}$.
\end{proposition}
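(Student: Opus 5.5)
We follow the two-dimensional strategy indicated in Section~\ref{subsec:proof_structure}. Since $\omega$ is open and nonempty, after shrinking we may assume $\omega$ contains a Euclidean ball $B(x_0,2r)$ of radius $2r$ in a fundamental domain. Fix a nonnegative bump $\chi\in C_c^\infty(B(x_0,2r))$ with $\chi\equiv1$ on $B(x_0,r)$. It then suffices to prove
\[
    \|U\|^2_{L^2(\mathbb T^2_\Lambda)}\le C\int\chi^2|U|^2\,dx
\]
uniformly in $\lambda$.

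Write $U=\sum_{\xi\in\mathcal E_\lambda}a_\xi e_\xi$, where $\mathcal E_\lambda=\{\xi\in\Lambda^*:|\xi|^2=\lambda\}$, and expand
\[
    \int\chi^2|U|^2=\sum_{\xi,\eta}a_\xi\overline{a_\eta}\,\widehat{\chi^2}(\eta-\xi).
\]
The diagonal contributes $\widehat{\chi^2}(0)\sum|a_\xi|^2=\|\chi\|_2^2|\Lambda|^{-1}\|U\|^2$. The off-diagonal terms decay rapidly in $|\xi-\eta|$, since $\widehat{\chi^2}$ is Schwartz.

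\textbf{Step 1 (clusters).} The key arithmetic input is a Jarn\'ik-type lemma for a circle of radius $R=\sqrt\lambda$: any arc of length $\le c R^{1/3}$ contains at most two lattice points of $\Lambda^*$, since three lattice points on a short arc span a triangle of area $\ge\frac12\operatorname{covol}(\Lambda^*)$, while a triangle inscribed in an arc of length $\ell$ has area $\lesssim\ell^3/R$. Iterating this bound (or using the Bombieri--Pila / Cilleruelo--C\'ordoba refinement) we will show that, for any fixed scale $M$, $\mathcal E_\lambda$ decomposes into clusters $\mathcal C_1,\dots,\mathcal C_J$ with:
\begin{itemize}
\item $\#\mathcal C_j\le N_0$, uniformly in $\lambda$;
\item $\operatorname{dist}(\mathcal C_j,\mathcal C_k)\ge M$ for $j\ne k$.
\end{itemize}
For bounded $\lambda$ this is trivial with $N_0\approx\#\mathcal E_\lambda$. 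For large $\lambda$, points within distance $M\ll R^{1/3}$ of each other lie on a short arc, so clusters formed by chaining at step $M$ have bounded size. Here we take $M$ large but fixed, then $\lambda\ge\lambda_M$. Since only finitely many eigenvalues are below $\lambda_M$, each finite-dimensional, we handle those by unique continuation (analyticity) plus compactness.

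\textbf{Step 2 (decoupling).} Write $U=\sum_j U_j$ with $U_j$ supported on $\mathcal C_j$. Then
\[
    \int\chi^2|U|^2=\sum_j\int\chi^2|U_j|^2+\sum_{j\ne k}\int\chi^2 U_j\overline{U_k}.
\]
The cross terms are bounded by Schur's test:
\[
    \sup_\xi\sum_{|\eta-\xi|\ge M}|\widehat{\chi^2}(\eta-\xi)|\le\varepsilon(M)\to0,
\]
using only that lattice points in an annulus $|\eta-\xi|\in[k,k+1]$ number $O(k)$, uniformly in $\lambda$. This gives $|\text{cross}|\le\varepsilon(M)\|U\|^2$.

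\textbf{Step 3 (per cluster).} For each cluster, $U_j$ is a sum of at most $N_0$ exponentials. Lemma~\ref{lem:finite_complexity_observation}, applied on a fundamental domain with $V=B(x_0,r)$, gives
\[
    \int\chi^2|U_j|^2\ge\gamma\,\|U_j\|^2,
\]
with $\gamma$ depending only on $N_0$ and $r$, and in particular not on the frequencies. Summing and using orthogonality, $\sum_j\|U_j\|^2=\|U\|^2$, yields
\[
    \int\chi^2|U|^2\ge(\gamma-\varepsilon(M))\|U\|^2.
\]
The subtlety is that $N_0$ may depend on $M$: the Jarn\'ik chaining gives a uniform bound independent of $M$ once $\lambda\ge\lambda_M$, since all points of a chain lie on an arc of length $\le N_0M\ll R^{1/3}$, forcing $N_0\le2$. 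Thus $\gamma=\gamma(N_0=2,r)$ is fixed first, and then $M$ is chosen with $\varepsilon(M)<\gamma/2$.

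\textbf{Main obstacle.} The main obstacle is making this chaining rigorous. A chain of $M$-close points could a priori wander along the circle, so we must show the chain terminates. Three consecutive chained points already violate the area bound when $M^3/R\ll\operatorname{covol}$, which caps every cluster at two points. This is exactly where the two-dimensionality and the curvature of the circle enter.
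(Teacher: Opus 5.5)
Your proof is correct and has the same architecture as the paper's argument. The paper proves this statement only through the stronger measurable-set version, Proposition~\ref{prop:positive_measure_torus}, and that proof uses the same four steps: Jarn\'ik clustering of $\{\xi\in\Lambda^*:|\xi|^2=\lambda\}$ into clusters of at most two points (your three-consecutive-points area argument is exactly Lemma~\ref{lem:lattice_jarnik}), a uniform lower bound on the within-cluster part of the quadratic form, a tail bound on the cross-cluster part, and analyticity plus compactness for the finitely many low eigenspaces.

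The differences are technical:
\begin{itemize}
\item Your smooth cutoff $\chi$ makes $\widehat{\chi^2}$ summable, so a Schur bound at a fixed separation $M\le cR^{1/3}$ suffices. The paper works with $\mathbf 1_\omega$, whose Fourier coefficients are only square-summable, so it needs Cauchy--Schwarz together with the fact that each nonzero difference $\zeta-\xi$ arises from at most two ordered pairs on the circle.
\item That $\ell^2$ route is what lets the paper treat arbitrary measurable $\omega$, which a ball-based cutoff cannot reach. For the open $\omega$ in this statement, your version is slightly simpler.
\item For two-point clusters you do not need the Tur\'an--Nazarov lemma. The explicit $2\times2$ form bound $\widehat{\chi^2}(0)-\sup_{\zeta\neq0}|\widehat{\chi^2}(\zeta)|>0$ would do the same job.
\end{itemize}
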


\begin{remark}
\label{rem:torus_observation_reference}
The essential feature of
Proposition~\ref{prop:torus_observation} is that its observation constant
is independent of the eigenvalue.  This is stronger than a spectral
inequality whose constant grows with the spectral parameter, and it is
this uniformity that permits the passage to uniform non-localization on
the triangle.
\end{remark}

We first apply Proposition~\ref{prop:torus_observation} to open
observation sets in the triangle.

\begin{lemma}[Observation after unfolding]
\label{lem:observation_after_unfolding}
Let $V\subset T$ be a nonempty open set. There exists a constant
$c_V>0$ such that, for $B\in\{D,N\}$, every eigenvalue $\lambda$ of
$-\Delta_B$, and every $u\in E_{\lambda}^{B}$,
\begin{equation}
\|u\|_{L^{2}(V)}^{2}
\geq
c_V\|u\|_{L^{2}(T)}^{2}.
\label{eq:triangle_open_observation}
\end{equation}
\end{lemma}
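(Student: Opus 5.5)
The plan is to transfer the toral estimate of Proposition~\ref{prop:torus_observation} to the triangle using the reflection extension of Lemma~\ref{lem:reflection_extension}. Fix a nonempty open $V\subset T$ and let $\pi:\mathbb R^2\to\mathbb T_\Lambda^2$ be the quotient map. Take $\omega:=\pi(V)$. This set is open and nonempty in $\mathbb T_\Lambda^2$, since $\pi$ is an open map. Proposition~\ref{prop:torus_observation} then gives a constant $C_\omega$, independent of $\lambda$, and we set $c_V:=1/(6C_\omega)$.

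Let $u\in E_\lambda^B$ with $u\neq0$, and let $U\in\mathcal H_\lambda$ be its extension from Lemma~\ref{lem:reflection_extension}. The key bookkeeping fact concerns a fundamental domain of $\Lambda$. By \eqref{eq:reflection_lattice_covolume}, such a domain is, up to null sets, the union of six alcoves $w_1(T),\dots,w_6(T)$ with $w_k\in W_{\mathrm{aff}}$. On each alcove, $U(w_kx)=\chi_B(w_k)u(x)$ with $|\chi_B|=1$, and each $w_k$ is an isometry. Hence
\[
\|U\|_{L^2(\mathbb T_\Lambda^2)}^2=6\|u\|_{L^2(T)}^2 .
\]

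Next we need the injectivity point: $\pi|_T$ is injective. Distinct points of $T$ cannot differ by a lattice vector $a\in\Lambda$, because the translation $\tau_a\in W_{\mathrm{aff}}$ maps $T$ to an alcove whose interior is disjoint from $T$ unless $a=0$. This uses simple transitivity on alcove interiors. Consequently $\pi$ maps $V$ bijectively and measure-preservingly onto $\omega$, and since $U|_T=u$,
\[
\|U\|_{L^2(\omega)}^2=\|u\|_{L^2(V)}^2 .
\]

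Combining the two identities with \eqref{eq:torus_observation} gives
\[
6\|u\|_{L^2(T)}^2\le C_\omega\|u\|_{L^2(V)}^2 ,
\]
which is \eqref{eq:triangle_open_observation}. The constant depends only on $V$, because the same $\omega$ and $C_\omega$ serve both $B=D$ and $B=N$ and every $\lambda$.

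Given Proposition~\ref{prop:torus_observation}, the lemma has no real obstacle; the only care needed is in the two measure identities above. The substantive difficulty is Proposition~\ref{prop:torus_observation} itself, which the paper proves separately via Jarn\'ik-type clustering.
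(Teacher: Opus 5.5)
Your argument is correct and follows the paper's route: extend $u$ by reflection, identify $\|U\|_{L^2(\mathbb T_\Lambda^2)}^2$ with $6\|u\|_{L^2(T)}^2$, and apply Proposition~\ref{prop:torus_observation}. The only difference is the choice of $\omega$: the paper takes the union of the six reflected copies of $V$, so both norms scale by $6$, no injectivity of $\pi|_T$ is needed, and $c_V=C_\omega^{-1}$; you take the single copy $\pi(V)$ and correctly supply the injectivity argument, and your final inequality actually gives $c_V=6/C_\omega$, so your stated $c_V=1/(6C_\omega)$ is merely a weaker constant that still works.
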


\begin{proof}
Let $u\in E_{\lambda}^{B}$ and let $U\in\mathcal{H}_{\lambda}$ be
the extension furnished by \cref{lem:reflection_extension}. Let
$\mathcal{F}$ be a fundamental domain for the lattice $\Lambda$.
By Eq. (\ref{eq:reflection_lattice_covolume}), the domain $\mathcal{F}$ is
tiled, up to sets of measure zero, by exactly six reflected copies of
$T$. Since reflection changes at most the sign of the extension,
\begin{equation}
\|U\|_{L^{2}(\mathcal{F})}^{2}
=
6\|u\|_{L^{2}(T)}^{2}.
\label{eq:global_norm_relation}
\end{equation}
Likewise, let $\omega\subset\mathcal{F}$ be the union of the
corresponding reflected copies of $V$. Then,
\begin{equation}
\|U\|_{L^{2}(\omega)}^{2}
=
6\|u\|_{L^{2}(V)}^{2}.
\label{eq:local_norm_relation}
\end{equation}
Since $V$ is nonempty and open, so is $\omega$ on the torus.
Applying Proposition \ref{prop:torus_observation} gives
\[
6\|u\|_{L^{2}(T)}^{2}
=
\|U\|_{L^{2}(\mathcal{F})}^{2}
\leq
C_{\omega}
\|U\|_{L^{2}(\omega)}^{2}
=
6C_{\omega}\|u\|_{L^{2}(V)}^{2}.
\]
Cancelling the factor $6$ yields
\[
\|u\|_{L^{2}(V)}^{2}
\geq
C_{\omega}^{-1}\|u\|_{L^{2}(T)}^{2}.
\]
Thus, the inequality (\ref{eq:triangle_open_observation}) holds with
$c_V=C_{\omega}^{-1}$.
\end{proof}

\subsection{Dirichlet and Neumann non-localization}
\label{subsec:DN_proofs}

We can now prove the first two main results, for open observation
sets, directly from the unfolding construction.

\begin{proof}[Proof of Theorem \ref{thm:dirichlet_unl} for open $V$]
Let $V\subset T$ be nonempty and open. By
Lemma \ref{lem:observation_after_unfolding}, there exists $c_V>0$,
independent of $\lambda$, such that
\[
\|u\|_{L^{2}(V)}^{2}
\geq
c_V\|u\|_{L^{2}(T)}^{2}
\]
for every $u\in E_{\lambda}^{D}$ and every Dirichlet eigenvalue
$\lambda$. Hence,
\[
\inf_{\lambda\in\operatorname{Spec}(-\Delta_D)}
\inf_{0\neq u\in E_{\lambda}^{D}}
\frac{\|u\|_{L^{2}(V)}^{2}}
     {\|u\|_{L^{2}(T)}^{2}}
\geq c_V>0.
\]
\end{proof}

\begin{proof}[Proof of Theorem \ref{thm:neumann_unl} for open $V$]
The argument is identical, using the even reflection extension in
Lemma \ref{lem:reflection_extension}. Thus, there exists $c_V>0$ such that
\[
\|u\|_{L^{2}(V)}^{2}
\geq
c_V\|u\|_{L^{2}(T)}^{2}
\]
for every $u\in E_{\lambda}^{N}$ and every Neumann eigenvalue
$\lambda$.
\end{proof}

\subsection{Positive-measure observation sets}
\label{subsec:positive_measure_sets}

Theorems~\ref{thm:dirichlet_unl} and
\ref{thm:neumann_unl} concern arbitrary measurable sets of positive
measure.  An estimate for open observation sets alone does not imply
this statement, since a measurable set of positive measure need not
contain a nonempty open subset.  We therefore establish the corresponding
toral observation estimate directly for measurable sets.

The proof uses the two-dimensional Fourier-cluster argument of
Burq, Germain, Sorella, and Zhu~\cite{BurqGermainSorellaZhu2026}.
The lattice-geometric input for the triangular torus is established
below.

\begin{lemma}[Jarn\'ik-type clustering on a fixed planar lattice]
\label{lem:lattice_jarnik}
Let $\Gamma\subset\mathbb R^2$ be a fixed full-rank lattice and let
\[
    S_\rho(\Gamma)
    :=
    \{\xi\in\Gamma:|\xi|=\rho\}.
\]
There exists a constant $c_\Gamma>0$ such that, for every $\rho>0$,
the finite set $S_\rho(\Gamma)$ admits a partition
\[
    S_\rho(\Gamma)=\bigcup_\alpha \Omega_\alpha
\]
with
\begin{equation}
    \#\Omega_\alpha\leq 2
\label{eq:lattice_cluster_size}
\end{equation}
and, whenever $\alpha\neq\beta$,
\begin{equation}
    \operatorname{dist}(\Omega_\alpha,\Omega_\beta)
    \geq
    c_\Gamma\rho^{1/3}.
\label{eq:lattice_cluster_separation}
\end{equation}
\end{lemma}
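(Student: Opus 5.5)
The plan is to use Jarník's theorem on arcs of circles: a strictly convex arc of length shorter than $c\rho^{1/3}$ contains at most two lattice points. More precisely, we show there is $c_\Gamma>0$ such that any three distinct points of $S_\rho(\Gamma)$ have diameter at least $2c_\Gamma\rho^{1/3}$. From this the partition follows by a graph-theoretic argument.

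\textbf{Step 1 (area of lattice triangles).} Let $\xi_1,\xi_2,\xi_3\in S_\rho(\Gamma)$ be distinct. They lie on a circle, so they are not collinear. The triangle they span has vertices in $\Gamma$, hence its area is at least $\tfrac12\operatorname{covol}(\Gamma)$. This holds because twice the area equals $|\det(\xi_2-\xi_1,\xi_3-\xi_1)|$, a nonzero integer multiple of $\operatorname{covol}(\Gamma)$.

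\textbf{Step 2 (area of inscribed triangles).} For a triangle with side lengths $a,b,c$ inscribed in a circle of radius $\rho$, the area is $abc/(4\rho)$. If its diameter is at most $d$, then $abc\le d^3$. Hence $\tfrac12\operatorname{covol}(\Gamma)\le d^3/(4\rho)$, so $d\ge(2\operatorname{covol}(\Gamma)\rho)^{1/3}$. Setting $c_\Gamma:=\tfrac13(2\operatorname{covol}(\Gamma))^{1/3}$ gives: any three distinct points of $S_\rho(\Gamma)$ contain a pair at distance at least $3c_\Gamma\rho^{1/3}$.

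\textbf{Step 3 (clustering).} Form the graph on $S_\rho(\Gamma)$ joining $\xi,\eta$ when $|\xi-\eta|<c_\Gamma\rho^{1/3}$. Let the $\Omega_\alpha$ be its connected components. Distinct components are then at distance at least $c_\Gamma\rho^{1/3}$, which is \eqref{eq:lattice_cluster_separation}.

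It remains to show each component has at most two points. Suppose a component had three points. By connectedness it contains a path $\xi_1\sim\xi_2\sim\xi_3$ of three distinct points. Its diameter is then less than $2c_\Gamma\rho^{1/3}<3c_\Gamma\rho^{1/3}$, contradicting Step 2. This proves \eqref{eq:lattice_cluster_size}.

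\textbf{Main obstacle.} The only delicate point is the chaining issue: a component could a priori be long even though each of its edges is short. Choosing the threshold as a fraction of the three-point diameter bound handles this, since any three-point path then has small diameter. For small $\rho$ the statement is trivial or covered by the same argument, because no uniformity beyond the constant $c_\Gamma$ is needed.
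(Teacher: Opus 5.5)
Your proof is correct and follows essentially the same route as the paper. Both use the lower bound $\tfrac12\operatorname{covol}(\Gamma)$ for the area of noncollinear lattice triangles, the circumradius formula $abc/(4\rho)$ to exclude three points joined by two short edges, and connected components of the short-distance graph as clusters. The only difference is cosmetic: you pass through a three-point diameter bound with $c_\Gamma=\tfrac13(2\operatorname{covol}(\Gamma))^{1/3}$, whereas the paper bounds the product of side lengths directly by $2t^3$ and takes $c_\Gamma=(\operatorname{covol}(\Gamma)/2)^{1/3}$.
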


\begin{proof}
Let $A_\Gamma>0$ denote the covolume of $\Gamma$. If
$p,q,r\in\Gamma$ are distinct and noncollinear, then
\[
    |\triangle pqr|
    =
    \frac12|\det(q-p,r-p)|
    \geq
    \frac{A_\Gamma}{2},
\]
because the determinant of two lattice vectors is an integer multiple
of the determinant of a lattice basis.
Suppose now that $p,q,r\in S_\rho(\Gamma)$ are distinct and satisfy
\[
    |p-q|<t,
    \qquad
    |q-r|<t.
\]
Three distinct points on a circle are noncollinear, and the
circumradius formula gives
\[
    |\triangle pqr|
    =
    \frac{|p-q|\,|q-r|\,|r-p|}{4\rho}
    <
    \frac{t^3}{2\rho},
\]
since $|r-p|\leq |r-q|+|q-p|<2t$. Set
\[
    c_\Gamma:=\left(\frac{A_\Gamma}{2}\right)^{1/3}
\]
and let $t=c_\Gamma\rho^{1/3}$. Then,
\[
    \frac{t^3}{2\rho}=\frac{A_\Gamma}{4}
    <\frac{A_\Gamma}{2},
\]
so the preceding upper bound contradicts
$|\triangle pqr|\geq A_\Gamma/2$. Hence, there cannot exist
three distinct points of $S_\rho(\Gamma)$ joined by two successive
edges of length less than $c_\Gamma\rho^{1/3}$.

Form the graph on $S_\rho(\Gamma)$ in which two distinct vertices are
adjacent exactly when their distance is less than
$c_\Gamma\rho^{1/3}$, and let the sets $\Omega_\alpha$ be its connected
components. A connected component containing at least three vertices
contains a simple path with three distinct vertices, which was just
excluded. Thus, the inequality (\ref{eq:lattice_cluster_size}) holds. Distinct connected
components have no edge between them, which is precisely the inequality 
(\ref{eq:lattice_cluster_separation}).
\end{proof}

\begin{proposition}[Positive-measure observation on the triangular flat torus]
\label{prop:positive_measure_torus}
Let
$
    \mathbb T_\Lambda^2=\mathbb R^2/\Lambda
$
be the flat torus associated with the equilateral-triangle reflection
lattice. For every measurable set
$\omega\subset\mathbb T_\Lambda^2$ with $|\omega|>0$, there exists a
constant $C_{\omega,\Lambda}>0$ such that
\begin{equation}
    \|U\|_{L^2(\mathbb T_\Lambda^2)}^2
    \leq
    C_{\omega,\Lambda}
    \|U\|_{L^2(\omega)}^2
\label{eq:positive_measure_torus}
\end{equation}
for every eigenvalue $\lambda$ of
$-\Delta_{\mathbb T_\Lambda^2}$ and every
$U\in\mathcal H_\lambda$. The constant is independent of $\lambda$
and of the multiplicity of $\mathcal H_\lambda$.
\end{proposition}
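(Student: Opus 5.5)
Write $U=\sum_{\xi\in S}\widehat U(\xi)e^{i\xi\cdot x}$ with $S=S_\rho(\Lambda^*)$ and $\rho=\sqrt\lambda$. The plan is to follow the two-dimensional Fourier-cluster scheme of Burq--Germain--Sorella--Zhu, using Lemma~\ref{lem:lattice_jarnik} applied to $\Gamma=\Lambda^*$ as the lattice input.

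\textbf{Bounded levels.} For $\rho\le R_0$, with $R_0$ to be fixed later, there are only finitely many eigenspaces. Each is finite-dimensional and consists of real-analytic functions, so a nonzero element cannot vanish on a set of positive measure. By compactness of the unit sphere, each such eigenspace satisfies \eqref{eq:positive_measure_torus} with some constant, and we take the maximum of these finitely many constants. Alternatively, Lemma~\ref{lem:finite_complexity_observation} applies directly, since the number of frequencies is bounded by $\#S_{R_0}$.

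\textbf{Large levels.} For $\rho\ge R_0$, partition $S=\bigcup_\alpha\Omega_\alpha$ as in Lemma~\ref{lem:lattice_jarnik}, with $\#\Omega_\alpha\le2$ and cluster separation at least $c\rho^{1/3}$. Set $U_\alpha=\sum_{\xi\in\Omega_\alpha}\widehat U(\xi)e_\xi$. Then the $U_\alpha$ are $L^2(\mathbb T^2_\Lambda)$-orthogonal.

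Expand
\[
\|U\|_{L^2(\omega)}^2=\sum_\alpha\|U_\alpha\|_{L^2(\omega)}^2+\sum_{\alpha\ne\beta}\langle U_\alpha,U_\beta\rangle_{L^2(\omega)}.
\]

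\emph{Diagonal terms.} Each $U_\alpha$ is a sum of at most two exponentials, so Lemma~\ref{lem:finite_complexity_observation} with $N=2$ gives
\[
\|U_\alpha\|_{L^2(\omega)}^2\ge\gamma\|U_\alpha\|^2_{L^2(\mathcal F)},
\]
uniformly in the frequencies. This applies on a fundamental domain $\mathcal F\supset\omega$ after lifting $\omega$.

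\emph{Off-diagonal terms.} Each cross term equals
\[
\sum_{\xi\in\Omega_\alpha,\ \eta\in\Omega_\beta}\widehat U(\xi)\overline{\widehat U(\eta)}\,\widehat{1_\omega}(\eta-\xi),
\]
where every difference satisfies $|\xi-\eta|\ge c\rho^{1/3}$.

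\textbf{Main obstacle: controlling the off-diagonal sum.} The difficulty is that $\widehat{1_\omega}$ decays only in $\ell^2$, not uniformly, so a plain Schur bound fails. I would first replace $1_\omega$ by a smoothed weight. Choose $\chi\in C^\infty(\mathbb T^2_\Lambda)$ with $0\le\chi\le1$ and $\|\chi-1_\omega\|_{L^1}$ small. This is where measurability enters: by Lusin/regularity one finds $\chi\le 1_{\omega'}$ for a compact $\omega'\subset\omega$ up to small error. More robustly, I would use the Chebyshev step from the proof of Lemma~\ref{lem:finite_complexity_observation} per cluster, combined with the fact that
\[
\sum_\alpha\|U_\alpha\|_\infty^2\lesssim\|U\|_2^2
\]
(clusters have bounded size), to reduce to a weight $\chi$ whose Fourier coefficients satisfy $|\widehat\chi(k)|\le C_M(1+|k|)^{-M}$.

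With such a $\chi$, the off-diagonal part is bounded via Schur's test by
\[
\sup_\xi\sum_{\eta\in S,\ |\eta-\xi|\ge c\rho^{1/3}}|\widehat\chi(\eta-\xi)|\;\|U\|_2^2.
\]
Since the points of $S$ are $\gtrsim1$-separated lattice points, this supremum is $O(\rho^{-(M-2)/3})\to0$. Choosing $R_0$ large then absorbs the off-diagonal terms into $\tfrac{\gamma}{2}\|U\|^2$.

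The delicate step is making $\chi$ both smooth and bounded below by a fixed multiple of a positive-measure subset's indicator, so that the diagonal bound survives with a uniform constant. My first attempt would replace $\omega$ by $\chi=1_{\omega}*\phi_\epsilon$, where $\phi_\epsilon$ is a smooth mollifier at a fixed scale $\epsilon$. This gives $\chi\ge\tfrac12$ on a set $\omega_\epsilon$ of measure at least $\tfrac12|\omega|$ (Lebesgue density), and $\chi$ is dominated by an average of translates of $1_\omega$. Translation covariance of the Lemma~\ref{lem:finite_complexity_observation} constant (via $|\omega+h|=|\omega|$ and $\mathcal F$ fixed) then transfers the diagonal bound from $\chi$ back to $1_\omega$. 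Summing, $\|U\|_{L^2(\omega)}^2\ge\tfrac\gamma2\|U\|^2$, which gives \eqref{eq:positive_measure_torus} with $C_{\omega,\Lambda}$ the worse of the two regime constants.
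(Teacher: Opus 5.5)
\textbf{There is a genuine gap: the cross-cluster terms for the actual weight $1_\omega$ are never bounded.} Your low-frequency step is fine. So is your within-cluster bound (Lemma~\ref{lem:finite_complexity_observation} with $N=2$ on a fundamental domain), and you are right that Schur's test fails for $\widehat{1_\omega}$. The trouble is in the smoothing step.

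What you bound from below is $\int|U|^2\chi$ with $\chi=1_\omega*\phi_\epsilon$. This equals the average $\int\phi_\epsilon(h)\,\|U\|_{L^2(\omega+h)}^2\,dh$ over translates of $\omega$. A lower bound on this average gives no lower bound on $\|U\|_{L^2(\omega)}^2$. Passing from $\chi$ to $1_\omega$ would need $1_\omega\geq c\chi$ a.e. That is impossible for any continuous $\chi\not\equiv0$ when $\omega$ contains no ball up to null sets (e.g.\ a fat Cantor set), which is exactly the case this proposition is meant to cover.

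Translation covariance of the Tur\'an--Nazarov constant only re-proves the diagonal bound, which you already had for $1_\omega$. The cross terms $\langle U_\alpha,U_\beta\rangle_{L^2(\omega)}$ have coefficients $\widehat{1_\omega}(\eta-\xi)$. They are different numbers from the $\chi$-cross terms, so your final ``summing'' combines the diagonal of one weight with the off-diagonal of another.

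The fallbacks do not repair this. Approximating $1_\omega$ by $\chi$ in $L^1$ would need a uniform bound on $\|U\|_{L^\infty}/\|U\|_{L^2}$ over $\mathcal H_\lambda$. No such bound holds: take $U=\sum_{\xi\in S_\rho}e_\xi$ at $x=0$, with $\#S_\rho$ unbounded. The estimate $\sum_\alpha\|U_\alpha\|_\infty^2\lesssim\|U\|_2^2$ gives no control of products across clusters.

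\textbf{The missing idea: no smoothing is needed.} Replace Schur's test by a Hilbert--Schmidt (Cauchy--Schwarz) bound on the cross-cluster block. The key fact is that each $\eta\neq0$ is the difference $\zeta-\xi$ of at most two ordered pairs on the circle. Indeed, $|\xi|=|\xi+\eta|=\rho$ forces $2\langle\xi,\eta\rangle+|\eta|^2=0$, a line meeting the circle at most twice. Let $\mu=\nu(\omega)^{-1}1_\omega\,d\nu$ and let $\mathcal P_\rho$ be the ordered pairs lying in distinct clusters. Then
\[
\Bigl|\sum_{(\xi,\zeta)\in\mathcal P_\rho}a_\xi\overline{a_\zeta}\,\widehat\mu(\zeta-\xi)\Bigr|
\le\|a\|_{\ell^2}^2\Bigl(2\sum_{|\eta|\ge c\rho^{1/3}}|\widehat\mu(\eta)|^2\Bigr)^{1/2}.
\]
The right-hand side tends to $0$ as $\rho\to\infty$ simply because $\widehat\mu\in\ell^2(\Gamma)$ by Parseval.

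This is how the paper closes the proof. It also gets the within-cluster bound more cheaply, from $1-|\widehat\mu(\eta)|\ge\gamma_\mu>0$, rather than from Tur\'an--Nazarov.

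The same two-pairs count also yields Zygmund's uniform bound $\|U\|_{L^4}\lesssim\|U\|_{L^2}$ on $\mathcal H_\lambda$. That bound would make your smoothing legitimate if $\chi$ is chosen close to $1_\omega$ in $L^2$. Either way, some input exploiting the circle geometry of the whole frequency set is needed, and your proposal contains none.
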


\begin{proof}
Let $\Gamma=\Lambda^*$ be the dual lattice.  With the convention in
Eq. (\ref{eq:dual_lattice}), the toral characters are
$x\mapsto e^{i\langle\xi,x\rangle}$, $\xi\in\Gamma$.  For an
eigenvalue corresponding to radius $\rho$,
every $U\in\mathcal H_\lambda$ has an expansion
\begin{equation}
    U(x)
    =
    \sum_{\xi\in S_\rho(\Gamma)}
    a_\xi e^{i\langle\xi,x\rangle}.
\label{eq:torus_shell_expansion}
\end{equation}
Let $d\nu:=|\mathbb T_\Lambda^2|^{-1}dx$ be normalized Haar probability
measure on the torus. The characters
$x\mapsto e^{i\langle\xi,x\rangle}$, $\xi\in\Gamma$, are then an
orthonormal basis of $L^2(d\nu)$, and Parseval gives the exact identity
\begin{equation}
    \|U\|_{L^2(d\nu)}^2
    =
    \sum_{\xi\in S_\rho(\Gamma)}|a_\xi|^2.
\label{eq:normalized_parseval}
\end{equation}
If $\rho=0$, then $\mathcal H_0$ consists only of constants and is
included in the finite low-frequency sector treated below. We therefore
assume for the moment that $\rho>0$.
Set
\[
    d\mu
    :=
    \frac{\mathbf 1_\omega}{\nu(\omega)}\,d\nu.
\]
Then, $\mu$ is a probability measure and its density with respect to
$\nu$ belongs to $L^2(d\nu)$. Define
\[
    \widehat\mu(\eta)
    :=
    \int_{\mathbb T_\Lambda^2}
        e^{-i\langle\eta,x\rangle}\,d\mu(x),
    \qquad \eta\in\Gamma.
\]
By Parseval applied to the $L^2(d\nu)$ density of $\mu$,
\[
    (\widehat\mu(\eta))_{\eta\in\Gamma}\in\ell^2(\Gamma),
    \qquad
    \widehat\mu(\eta)\to0
    \quad\text{as }|\eta|\to\infty.
\]
Moreover,
\begin{equation}
    \gamma_\mu
    :=
    1-
    \sup_{\eta\in\Gamma\setminus\{0\}}
    |\widehat\mu(\eta)|
    >0.
\label{eq:gamma_mu_positive}
\end{equation}
Indeed, for every nonzero $\eta$ one has
$|\widehat\mu(\eta)|<1$: equality in the triangle inequality would
force the nonconstant character
$e^{-i\langle\eta,x\rangle}$ to be constant $\mu$-almost everywhere,
whereas each of its level sets has two-dimensional Haar measure zero.
Since $\widehat\mu(\eta)\to0$, only finitely many nonzero frequencies
can satisfy $|\widehat\mu(\eta)|\geq1/2$; the maximum over that finite
set is strictly smaller than one. This proves the inequality
(\ref{eq:gamma_mu_positive}).

We shall also use the following elementary geometric observation.  For
each nonzero $\eta\in\Gamma$, there are at most two ordered pairs
$(\xi,\zeta)\in S_\rho(\Gamma)^2$ such that
\[
    \zeta-\xi=\eta.
\]
Indeed, if $\zeta=\xi+\eta$, then
$|\xi|=|\zeta|=\rho$ implies
\[
    2\langle \xi,\eta\rangle+|\eta|^2=0.
\]
Thus $\xi$ belongs to a fixed affine line, whose intersection with the
circle $|\xi|=\rho$ contains at most two points.

Apply Lemma \ref{lem:lattice_jarnik} to
$S_\rho(\Gamma)=\bigcup_\alpha\Omega_\alpha$. Expanding against $\mu$
gives
\begin{equation}
    \int |U|^2\,d\mu
    =
    \sum_{\xi,\zeta\in S_\rho(\Gamma)}
    a_\xi\overline{a_\zeta}\,
    \widehat\mu(\zeta-\xi).
\label{eq:mu_quadratic_form}
\end{equation}
For a singleton cluster the corresponding diagonal contribution is
$|a_\xi|^2$. For a two-point cluster $\{\xi,\zeta\}$, the associated
$2\times2$ Hermitian form has diagonal entries $1$ and off-diagonal
entry $\widehat\mu(\zeta-\xi)$; its smallest eigenvalue is
$1-|\widehat\mu(\zeta-\xi)|\geq\gamma_\mu$. Hence, the sum of all
within-cluster contributions is at least
\begin{equation}
    \gamma_\mu
    \sum_{\xi\in S_\rho(\Gamma)}|a_\xi|^2
    =
    \gamma_\mu\|U\|_{L^2(d\nu)}^2.
\label{eq:within_cluster_lower}
\end{equation}
For pairs belonging to distinct clusters, the inequality 
(\ref{eq:lattice_cluster_separation}) gives
\[
    |\xi-\zeta|
    \geq
    c_\Gamma\rho^{1/3}.
\]
Let $\mathcal P_\rho$ denote the set of ordered pairs
$(\xi,\zeta)$ belonging to distinct clusters. The corresponding
cross-cluster contribution is
\[
    Q_{\mathrm{cross}}
    =
    \sum_{(\xi,\zeta)\in\mathcal P_\rho}
    a_\xi\overline{a_\zeta}\,
    \widehat\mu(\zeta-\xi).
\]
By Cauchy--Schwarz and the equation (\cref{eq:normalized_parseval}),
\begin{align}
    |Q_{\mathrm{cross}}|
    &\leq
    \left(
        \sum_{(\xi,\zeta)\in\mathcal P_\rho}
        |a_\xi|^2|a_\zeta|^2
    \right)^{1/2}
    \left(
        \sum_{(\xi,\zeta)\in\mathcal P_\rho}
        |\widehat\mu(\zeta-\xi)|^2
    \right)^{1/2}\\
    &\leq
    \|U\|_{L^2(d\nu)}^2
    \left(
        \sum_{(\xi,\zeta)\in\mathcal P_\rho}
        |\widehat\mu(\zeta-\xi)|^2
    \right)^{1/2},
\label{eq:cross_cluster_cauchy}
\end{align}
because
\[
    \sum_{(\xi,\zeta)\in\mathcal P_\rho}
        |a_\xi|^2|a_\zeta|^2
    \leq
    \left(\sum_\xi|a_\xi|^2\right)^2.
\]
For each nonzero $\eta\in\Gamma$, at most two ordered pairs on the
circle satisfy $\zeta-\xi=\eta$. Since every pair in
$\mathcal P_\rho$ also satisfies
$|\zeta-\xi|\geq c_\Gamma\rho^{1/3}$,
\begin{equation}
    \sum_{(\xi,\zeta)\in\mathcal P_\rho}
    |\widehat\mu(\zeta-\xi)|^2
    \leq
    2\sum_{\substack{\eta\in\Gamma\\
    |\eta|\geq c_\Gamma\rho^{1/3}}}
    |\widehat\mu(\eta)|^2.
\label{eq:cross_cluster_difference_count}
\end{equation}
Consequently,
\begin{equation}
    |Q_{\mathrm{cross}}|
    \leq
    \mathcal R_\mu(\rho)
    \|U\|_{L^2(d\nu)}^2,
    \qquad
    \mathcal R_\mu(\rho)
    :=
    \left(
        2\sum_{\substack{\eta\in\Gamma\\
        |\eta|\geq c_\Gamma\rho^{1/3}}}
        |\widehat\mu(\eta)|^2
    \right)^{1/2}.
\label{eq:cross_cluster_tail}
\end{equation}
Because $\widehat\mu\in\ell^2(\Gamma)$,
$\mathcal R_\mu(\rho)\to0$ as $\rho\to\infty$. Choose $\rho_0>0$
so that
\begin{equation}
    \mathcal R_\mu(\rho)\leq\frac{\gamma_\mu}{2}
    \qquad\text{for all }\rho\geq\rho_0.
\label{eq:rho0_tail_choice}
\end{equation}
Combining both (\ref{eq:within_cluster_lower}) and (\ref{eq:cross_cluster_tail}) yields
\begin{equation}
    \int |U|^2\,d\mu
    \geq
    \frac{\gamma_\mu}{2}
    \|U\|_{L^2(d\nu)}^2
\label{eq:high_frequency_mu_observation}
\end{equation}
for every toral eigenspace with $\rho\geq\rho_0$. Since
$d\mu=\nu(\omega)^{-1}\mathbf1_\omega d\nu$, this is equivalent to
\begin{equation}
    \|U\|_{L^2(d\nu)}^2
    \leq
    \frac{2}{\gamma_\mu\,\nu(\omega)}
    \|U\|_{L^2(\omega,d\nu)}^2.
\label{eq:high_frequency_nu_observation}
\end{equation}
It remains to consider the finitely many eigenspaces corresponding to
$\rho<\rho_0$.  Fix one such eigenspace $\mathcal H_\lambda$.  If
$0\ne U\in\mathcal H_\lambda$, then $U$ is a nonzero real-analytic
trigonometric polynomial on the torus, and hence its zero set has
two-dimensional Lebesgue measure zero.  Since $|\omega|>0$, it follows
that
\[
    \|U\|_{L^2(\omega)}>0.
\]
The continuous map
\[
    U\longmapsto \|U\|_{L^2(\omega)}^2
\]
therefore has a positive minimum on the $L^2$ unit sphere of
$\mathcal H_\lambda$.  Taking the minimum over the finitely many
eigenspaces with $\rho<\rho_0$ gives a constant
$c_{\mathrm{low}}>0$.

Combining this estimate with
\eqref{eq:high_frequency_nu_observation}, and taking the minimum of the
low- and high-frequency constants, gives a positive constant independent
of $\lambda$.  Finally, replacing $d\nu$ by Lebesgue measure multiplies
the local and global squared norms by the same fixed normalization
factor, and hence leaves their ratio unchanged.  This proves
\eqref{eq:positive_measure_torus}.
\end{proof}

The positive-measure triangular estimate now follows from
Proposition \ref{prop:positive_measure_torus} by the same unfolding argument.

\begin{proof}[Completion of the proofs of
Theorems \ref{thm:dirichlet_unl} and \ref{thm:neumann_unl}]
Let $V\subset T$ be measurable with $|V|>0$, and let
$u\in E_\lambda^B$, where $B\in\{D,N\}$. Let $U\in\mathcal H_\lambda$
be the reflection extension of $u$ furnished by
Lemma \ref{lem:reflection_extension}, and form the reflected observation set
$\omega$ in a fundamental domain of the torus exactly as in the proof
of Lemma \ref{lem:observation_after_unfolding}. Then, $|\omega|>0$, and
\cref{eq:global_norm_relation,eq:local_norm_relation} give
$$
    \|U\|_{L^2(\mathbb T_\Lambda^2)}^2
    =6\|u\|_{L^2(T)}^2,
    \qquad
    \|U\|_{L^2(\omega)}^2
    =6\|u\|_{L^2(V)}^2.
$$
Applying Proposition \ref{prop:positive_measure_torus} to $U$ yields
$$
    6\|u\|_{L^2(T)}^2
    \le
    C_{\omega,\Lambda}\,
    6\|u\|_{L^2(V)}^2.
$$
After cancelling the factor $6$, we obtain
$$
    \|u\|_{L^2(V)}^2
    \ge
    C_{\omega,\Lambda}^{-1}
    \|u\|_{L^2(T)}^2.
$$
The constant depends only on the fixed observation set $V$ and the
triangular reflection lattice, and is independent of $\lambda$, its
multiplicity, and the choice of $u$ in the corresponding eigenspace.
This completes the proofs of
Theorems \ref{thm:dirichlet_unl} and \ref{thm:neumann_unl}
for arbitrary measurable observation sets of positive measure.
\end{proof}

\begin{remark}
\label{rem:observation_set_hypothesis}
The preceding argument depends only on the geometry of a fixed
two-dimensional dual lattice.  In particular, the passage to measurable
observation sets of positive measure does not use the arithmetic
parametrization
$
    m^2-mn+n^2
$
of the triangular spectrum.
\end{remark}

\subsection{Passage to Robin boundary conditions}
\label{subsec:DN_to_robin}

The Neumann observation estimate is uniform over complete eigenspaces
and over the spectral parameter.  To transfer this estimate to Robin
boundary conditions as $\sigma\to0$, it is therefore necessary to
control the corresponding Robin modal subspaces uniformly in the
spectral level.  Convergence of individual eigenvalues or eigenfunctions
does not provide such control.

The relevant comparison is thus a subspace estimate rather than a
pointwise perturbation statement.  Each Robin modal class must remain
uniformly close to its Neumann counterpart as $\sigma\downarrow0$, with
an error bound independent of the modal indices.  This distinction is
essential because the spectral level is allowed to tend to infinity
simultaneously with the Robin parameter tending to zero.  A perturbation
estimate whose constant deteriorates with frequency would therefore not
yield a uniform observation inequality.

The small-Robin analysis below proves precisely this frequency-uniform
modal comparison.  Combined with the complete-eigenspace Neumann
estimate, it yields an observation constant that is uniform
simultaneously in the spectral level and for all sufficiently small
Robin parameters.

\section{Robin spectral structure and boundary perturbations}
\label{sec:robin_structure}

We consider the Robin problem
\begin{equation}
    -\Delta u=\Lambda u
    \qquad \text{in }T,
    \qquad
    \partial_\nu u+\sigma u=0
    \qquad \text{on }\partial T,
\label{eq:robin_problem_sec4}
\end{equation}
where $\sigma\geq0$ is constant along $\partial T$.

The Robin spectrum of the equilateral triangle admits an explicit
parametrization due to McCartin~\cite{McCartin2004Robin}.  Rudnick and
Wigman~\cite{RudnickWigman2022} subsequently obtained uniform estimates
for the displacement of the Robin spectrum from the Neumann spectrum
and analyzed its multiplicities.  We recall the parts of these results
needed below and supplement them with estimates for the corresponding
modal subspaces.

The latter estimates are required because the small-Robin observation
problem is uniform both in the spectral level and over complete
eigenspaces.  In particular, convergence of individual Robin
eigenvalues or eigenfunctions as $\sigma\to0$ is not sufficient; the
Robin modal subspaces must be compared with their Neumann limits
uniformly in the modal indices.

\subsection{McCartin's parametrization}
\label{subsec:mccartin_parametrization}

Let
$
    r=\frac{L}{2\sqrt{3}}
$
be the inradius of $T$.  We write $\lambda$ for a generic spectral
value and $\Lambda_{m,n}(\sigma)$ for the Robin eigenvalue associated
with the McCartin branch indexed by $0\leq m\leq n$.  To distinguish
the side length $L$ from the phase variables below, the latter will
always carry the indices $(m,n)$ when they are first introduced.\\
Following McCartin~\cite{McCartin2004Robin}, for each admissible pair
$0\leq m\leq n$ and $\sigma\geq0$, let
\[
    \bigl(
        L_{m,n}(\sigma),
        M_{m,n}(\sigma),
        N_{m,n}(\sigma)
    \bigr)
\]
denote the corresponding solution of the secular system, characterized
by
\begin{equation}
    L_{m,n}(\sigma)\in
    \left(-\frac{\pi}{2},0\right],
    \qquad
    M_{m,n}(\sigma),N_{m,n}(\sigma)\in
    \left[0,\frac{\pi}{2}\right),
\label{eq:LMN_ranges}
\end{equation}
and
\[
    \bigl(
        L_{m,n}(0),
        M_{m,n}(0),
        N_{m,n}(0)
    \bigr)
    =(0,0,0).
\]
The existence and uniqueness of this phase triple follow from
McCartin's parametrization.  The phases satisfy
\begin{align}
&
\bigl(
2L-M-N-(m+n)\pi
\bigr)\tan L
=
3r\sigma,
\label{eq:secular_L}
\\
&
\bigl(
2M-N-L+m\pi
\bigr)\tan M
=
3r\sigma,
\label{eq:secular_M}
\\
&
\bigl(
2N-L-M+n\pi
\bigr)\tan N
=
3r\sigma.
\label{eq:secular_N}
\end{align}
Here and below, the subscripts $(m,n)$ on $L,M,N$ are omitted whenever
no ambiguity arises.

Set
\begin{equation}
    \mu
    =
    m+\frac{2M-N-L}{\pi},
    \qquad
    \nu
    =
    n+\frac{2N-L-M}{\pi},
\label{eq:mu_nu}
\end{equation}
and
\begin{equation}
    \ell=-\mu-\nu.
\label{eq:ell_parameter}
\end{equation}
The corresponding Robin eigenvalue is
\begin{equation}
    \Lambda_{m,n}(\sigma)
    =
    \frac{4\pi^2}{27r^2}
    \bigl(
        \mu^2+\mu\nu+\nu^2
    \bigr).
\label{eq:robin_eigenvalue}
\end{equation}
At $\sigma=0$, the phase variables vanish, so that
\[
    \mu=m,
    \qquad
    \nu=n,
    \qquad
    \ell=-m-n.
\]
Consequently,
\begin{equation}
    \Lambda_{m,n}(0)
    =
    \frac{4\pi^2}{27r^2}
    \bigl(
        m^2+mn+n^2
    \bigr),
\label{eq:neumann_limit_eigenvalue}
\end{equation}
the corresponding Neumann eigenvalue.

\subsection{Robin modal functions}
\label{subsec:robin_modal_functions}

The Robin eigenfunctions decompose into symmetric and antisymmetric
modal families with respect to the altitude $x=\sqrt{3}r$.  In the
notation of the preceding subsection, McCartin's formula
\cite{McCartin2004Robin,RudnickWigman2022} takes the form
\begin{align}
T_{m,n,\sigma}^{s/a}(x,y)
={}&
\cos\!\left(
    \frac{\pi\ell}{3r}(3r-y)-\delta_1
\right)
\left\{\begin{matrix}\cos\\ \sin\end{matrix}\right\}
\!\left(
    \frac{\sqrt3\pi(\mu-\nu)}{9r}
    (x-\sqrt3 r)
\right)
\notag\\
&+
\cos\!\left(
    \frac{\pi\mu}{3r}(3r-y)-\delta_2
\right)
\left\{\begin{matrix}\cos\\ \sin\end{matrix}\right\}
\!\left(
    \frac{\sqrt3\pi(\nu-\ell)}{9r}
    (x-\sqrt3 r)
\right)
\notag\\
&+
\cos\!\left(
    \frac{\pi\nu}{3r}(3r-y)-\delta_3
\right)
\left\{\begin{matrix}\cos\\ \sin\end{matrix}\right\}
\!\left(
    \frac{\sqrt3\pi(\ell-\mu)}{9r}
    (x-\sqrt3 r)
\right),
\label{eq:mccartin-modal-formula}
\end{align}
where the upper and lower choices give the symmetric and antisymmetric
modes, respectively.  The coefficients of the three separated terms
are fixed, while their frequencies and phases depend on
$(m,n,\sigma)$ through $\ell,\mu,\nu$ and $\delta_1,\delta_2,\delta_3$.

\begin{remark}
\label{rem:modal-parity-degeneracy}
The dimensions of the modal spaces remain unchanged at the Neumann
endpoint, apart from the systematic diagonal degeneracy.  If $m=n$,
symmetry and uniqueness of the McCartin branch give $M=N$, and hence
$\mu=\nu$ and $\delta_2=\delta_3$.  The first term in the
antisymmetric formula vanishes, while the remaining two terms cancel.
Thus
\[
    T_{m,m,\sigma}^{a}\equiv0,
\]
whereas the symmetric mode is nonzero.
For $m=0<n$ and $\sigma=0$,
\[
    (\ell,\mu,\nu)=(-n,0,n).
\]
The three transverse frequency differences are then $-n$, $2n$, and
$-n$.  The term of frequency $2n$ cannot be canceled by the other two,
so both $T_{0,n,0}^{s}$ and $T_{0,n,0}^{a}$ are nonzero.  Finally,
$T_{0,0,0}^{s}\equiv3$ and $T_{0,0,0}^{a}\equiv0$.  Hence the modal
space has dimension one for $m=n$ and dimension two for $m<n$ near
$\sigma=0$.
\end{remark}
Expanding the trigonometric factors in
\eqref{eq:mccartin-modal-formula} into complex exponentials gives
\begin{equation}
    T_{m,n,\sigma}^{\varepsilon}(x)
    =
    \sum_{q=1}^{N_0}
    a_{q}^{\varepsilon}(m,n,\sigma)
    e^{i\xi_q(m,n,\sigma)\cdot x},
    \qquad
    \varepsilon\in\{s,a\},
\label{eq:robin_finite_exponential}
\end{equation}
where $N_0$ is independent of $m,n$, and $\sigma$.  Thus each modal
representative has uniformly bounded Fourier complexity.  A complete
Robin eigenspace, however, may contain contributions from several
distinct modal classes.

For a Robin eigenvalue $\Lambda$, let
\begin{equation}
    \mathcal I_\sigma(\Lambda)
    =
    \left\{
        (m,n):
        \Lambda_{m,n}(\sigma)=\Lambda
    \right\}/\!\sim ,
\label{eq:robin_index_classes}
\end{equation}
where $\sim$ identifies the systematic symmetry copies belonging to
the same modal class, and set
\begin{equation}
    \kappa_\sigma(\Lambda)
    =
    \#\mathcal I_\sigma(\Lambda).
\label{eq:robin_complexity_sec4}
\end{equation}
The quantity $\kappa_\sigma(\Lambda)$ counts the distinct modal classes
that contribute to the eigenspace associated with $\Lambda$.

\begin{lemma}[Uniform Fourier complexity of a Robin modal class]
\label{lem:robin-modal-class-complexity}
There exists an integer $N_{\mathrm{mod}}\geq1$, independent of
$m,n$, and $\sigma$, such that every desymmetrized Robin modal space
$\mathcal R_{\mathfrak c}(\sigma)$ is contained in the span of at most
$N_{\mathrm{mod}}$ plane waves.  Moreover, for every Robin eigenvalue
$\Lambda$,
\begin{equation}
    E_{\Lambda,\sigma}
    =
    \sum_{\mathfrak c\in\mathcal I_\sigma(\Lambda)}
    \mathcal R_{\mathfrak c}(\sigma).
\label{eq:fixed-robin-complete-modal-decomposition}
\end{equation}
Consequently, every $u\in E_{\Lambda,\sigma}$ admits a representation
\begin{equation}
    u(x)=\sum_{j=1}^{M}a_j e^{i\xi_j\cdot x},
    \qquad
    M\leq
    N_{\mathrm{mod}}\,\kappa_\sigma(\Lambda),
\label{eq:modal-class-frequency-count}
\end{equation}
after coincident frequencies are combined.
\end{lemma}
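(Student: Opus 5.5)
Three things have to be shown: every desymmetrized modal space is uniformly plane-wave representable; the eigenspace $E_{\Lambda,\sigma}$ is exactly the sum of the modal spaces in $\mathcal I_\sigma(\Lambda)$; and the frequency count follows from these two. We take them in that order.

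\emph{Uniform Fourier complexity.} We work directly from McCartin's formula \eqref{eq:mccartin-modal-formula}. Each of the three summands is a product of two cosines (or a cosine and a sine), each with a linear argument in $(x,y)$. Writing $\cos\theta=\tfrac12(e^{i\theta}+e^{-i\theta})$ and $\sin\theta=\tfrac1{2i}(e^{i\theta}-e^{-i\theta})$, each product becomes four exponentials $e^{i\xi\cdot x}$. The constant phases $\delta_j$, the shifts $3r$ and $\sqrt3r$, and the fixed numerical prefactors only enter the coefficients $a_q^\varepsilon$, never the number of terms. Hence each of $T^s_{m,n,\sigma}$ and $T^a_{m,n,\sigma}$ lies in the span of at most $12$ plane waves, which is \eqref{eq:robin_finite_exponential} with $N_0=12$. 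A desymmetrized modal class $\mathfrak c$ is spanned by the symmetric and antisymmetric representatives of one index pair $(m,n)$, since the systematic symmetry copies are identified by $\sim$; this gives at most two functions. We may therefore take $N_{\mathrm{mod}}=24$. The count is visibly independent of $m$, $n$ and $\sigma$.

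\emph{The decomposition \eqref{eq:fixed-robin-complete-modal-decomposition}.} The inclusion $\supseteq$ holds because each $T^{\varepsilon}_{m,n,\sigma}$ with $\Lambda_{m,n}(\sigma)=\Lambda$ is a Robin eigenfunction with eigenvalue $\Lambda$, by McCartin's construction. The reverse inclusion is the substantive point. We invoke McCartin's completeness theorem \cite{McCartin2004Robin}, as also used in \cite{RudnickWigman2022}: the family $\{T^{\varepsilon}_{m,n,\sigma}\}$ taken over $0\le m\le n$ and $\varepsilon\in\{s,a\}$, discarding the identically vanishing ones from Remark~\ref{rem:modal-parity-degeneracy}, is complete in $L^2(T)$. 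The argument is then standard. Eigenfunctions of the self-adjoint operator $-\Delta_\sigma$ for distinct eigenvalues are orthogonal. So any $u\in E_{\Lambda,\sigma}$ is orthogonal to every modal function with $\Lambda_{m,n}(\sigma)\ne\Lambda$. Expanding $u$ in the complete family (after Gram--Schmidt within each finite-dimensional eigenspace), only modal functions with $\Lambda_{m,n}(\sigma)=\Lambda$ can appear. Hence $u\in\sum_{\mathfrak c\in\mathcal I_\sigma(\Lambda)}\mathcal R_{\mathfrak c}(\sigma)$. This sum is finite because $E_{\Lambda,\sigma}$ is finite-dimensional, and because \eqref{eq:robin_eigenvalue} grows quadratically in $(m,n)$, so only finitely many index pairs can share a given eigenvalue.

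\emph{Conclusion.} We apply the first step of the proof of Corollary~\ref{cor:modal_complexity_observation}: take the union of the at most $N_{\mathrm{mod}}$ frequencies of each of the $\kappa_\sigma(\Lambda)$ classes and merge coincident frequencies. This yields \eqref{eq:modal-class-frequency-count}.

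The main obstacle is the completeness input in the second step. Uniform complexity is bookkeeping, but the identification of $E_{\Lambda,\sigma}$ with a sum of McCartin modes requires that McCartin's symmetric and antisymmetric families exhaust the Robin spectrum with multiplicity. We cite this rather than reprove it. If a self-contained check were wanted, the plan would be a continuity argument: show the Robin modal spaces vary continuously in $\sigma$ and match the complete Neumann system at $\sigma=0$, then use the dimension count of Remark~\ref{rem:modal-parity-degeneracy} together with the counting functions to rule out missing eigenfunctions.
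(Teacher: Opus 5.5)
Your proposal is correct and follows the paper's route: you expand McCartin's modal formula into a bounded number of exponentials and bound the number of representatives in each desymmetrized class. You then obtain the decomposition from McCartin's completeness of the modal system, together with orthogonality of eigenspaces for distinct eigenvalues, and merge coincident frequencies. You make explicit what the paper leaves implicit ($N_0=12$, $N_{\mathrm{mod}}=24$, and the orthogonal-complement argument for the inclusion $E_{\Lambda,\sigma}\subseteq\sum_{\mathfrak c}\mathcal R_{\mathfrak c}(\sigma)$), but the ingredients and the cited completeness input are the same.
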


\begin{proof}
By McCartin's construction, the Robin eigenspaces are spanned by the
modal representatives associated with the indexed spectral classes.
Each desymmetrized class contains only a uniformly bounded number of
symmetry and parity representatives.  By
\eqref{eq:robin_finite_exponential}, each such representative is a
linear combination of at most $N_0$ plane waves.  This gives the
uniform bound $N_{\mathrm{mod}}$ for a single modal class and, by
completeness of the modal system,
\eqref{eq:fixed-robin-complete-modal-decomposition}.  Summing over the
$\kappa_\sigma(\Lambda)$ contributing classes and combining coincident
frequencies gives \eqref{eq:modal-class-frequency-count}.
\end{proof}

\subsection{Robin--Neumann spectral displacement}
\label{subsec:robin_neumann_gap}

We shall use the following uniform bound on the displacement of the
Robin spectrum from the Neumann spectrum, due to Rudnick and
Wigman~\cite{RudnickWigman2022}.

\begin{proposition}[Uniform Robin--Neumann spectral displacement]
\label{prop:robin_neumann_gap}
For every $\sigma>0$ and every admissible pair $(m,n)$,
\begin{equation}
    0
    <
    \Lambda_{m,n}(\sigma)-\Lambda_{m,n}(0)
    <
    \frac{4\sigma}{r}.
\label{eq:uniform_RN_gap}
\end{equation}
Consequently, for every $\sigma_1>0$,
\begin{equation}
    0\leq
    \Lambda_{m,n}(\sigma)-\Lambda_{m,n}(0)
    \leq
    \frac{4\sigma_1}{r},
    \qquad
    0\leq\sigma\leq\sigma_1,
\label{eq:uniform_RN_gap_compact}
\end{equation}
uniformly in $(m,n)$.
\end{proposition}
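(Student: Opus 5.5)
The plan is to treat the two inequalities in \eqref{eq:uniform_RN_gap} separately: positivity from a variational monotonicity argument, and the upper bound directly from McCartin's secular system \eqref{eq:secular_L}--\eqref{eq:secular_N}. The compact-range statement \eqref{eq:uniform_RN_gap_compact} then follows at once. The strict upper bound $4\sigma/r<4\sigma_1/r$ holds for $0<\sigma\le\sigma_1$, and equality $0$ holds at $\sigma=0$.

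\emph{Positivity.} Fix an admissible pair $(m,n)$ and follow the branch $\sigma\mapsto\Lambda_{m,n}(\sigma)$. This branch is real-analytic in $\sigma$, since the phase triple depends analytically on $\sigma$ by the implicit function theorem applied to the secular system with the normalization \eqref{eq:LMN_ranges}. Differentiate the quadratic form \eqref{eq:robin_quadratic_form} along the branch, Hellmann--Feynman style, with $u=T^{\varepsilon}_{m,n,\sigma}$ as in \eqref{eq:mccartin-modal-formula}:
\[
    \frac{d}{d\sigma}\Lambda_{m,n}(\sigma)
    =
    \frac{\int_{\partial T}|u|^2\,ds}{\int_T|u|^2\,dx}.
\]
This quantity is strictly positive, because a nonzero Robin eigenfunction cannot vanish identically on $\partial T$. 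Indeed, otherwise $u$ and $\partial_\nu u=-\sigma u$ would both vanish on $\partial T$, and unique continuation would force $u\equiv0$. Integrating from $0$ to $\sigma$ gives the strict lower bound.

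\emph{Upper bound.} Rather than estimate the boundary trace uniformly, which is delicate since a crude trace inequality costs a factor $\sqrt{\Lambda}$, I would work directly with the phases. Write $a:=\pi\mu$, $b:=\pi\nu$, $c:=-\pi\ell=\pi(\mu+\nu)$, using \eqref{eq:mu_nu} and \eqref{eq:ell_parameter}. Then
\[
    \mu^2+\mu\nu+\nu^2=\tfrac12(\mu^2+\nu^2+\ell^2),
\]
so \eqref{eq:robin_eigenvalue} becomes
\[
    \Lambda_{m,n}(\sigma)=\frac{2}{27r^2}\,(a^2+b^2+c^2).
\]
The secular equations read
\[
    c\tan(-L)=a\tan M=b\tan N=3r\sigma,
\]
up to the sign conventions in \eqref{eq:LMN_ranges}. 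Moreover, $a-\pi m$, $b-\pi n$ and $c-\pi(m+n)$ are the linear phase combinations $2M-N-L$, $2N-L-M$ and $M+N-2L$. The key step is to write the increment as
\[
    \sum (a-a_0)(a+a_0),
\]
with $a_0=\pi m$ and similarly for $b_0,c_0$. I would then rearrange this sum so that each phase $\varphi\in\{-L,M,N\}$ appears multiplied by its own secular coefficient. Since these combinations have zero sum, the cross terms should telescope into expressions of the form $\varphi\cdot(\text{coefficient})$ plus quadratic phase terms of definite sign. Using $\varphi\le\tan\varphi$ on $[0,\pi/2)$ together with the secular identities, each term $\varphi\cdot a$ is at most $3r\sigma$. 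This yields an increment of at most
\[
    \frac{2}{27r^2}\cdot C\cdot 3r\sigma
\]
for a small explicit constant $C$. The aim is to land at or below $4\sigma/r$, with strict inequality coming from $\tan\varphi>\varphi$ for $\varphi>0$.

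\emph{Main obstacle.} The hard part is doing this uniformly, including the low-index branches $m=0$ and $m=n$. For those branches a phase is not small: for $m=0$ the equation $(2M-N-L)\tan M=3r\sigma$ gives $M\sim\sqrt{\sigma}$, which is the square-root regime. The bound must therefore use only the exact secular identities and the inequality $\varphi\tan\varphi\ge\varphi^2$, never an expansion of the phases in $1/m$. If the algebraic rearrangement does not close with the constant $4$, I would fall back on Rudnick and Wigman's argument~\cite{RudnickWigman2022}, which the statement cites. That argument establishes exactly this uniform bound and can be invoked directly.
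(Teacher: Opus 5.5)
Your proposal is correct and takes a genuinely different route from the paper. The paper gives no argument of its own: it cites Rudnick--Wigman for the strict bound and reads off the compact-range version.

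Your upper-bound rearrangement does close, and with exactly the constant $4\sigma/r$, so the hedge and the fallback are unnecessary. In your notation ($a=\pi\mu$, $b=\pi\nu$, $c=\pi(\mu+\nu)$, $a_0=\pi m$, $b_0=\pi n$, $c_0=\pi(m+n)$) you have $a-a_0=2M-N-L$, $b-b_0=2N-L-M$ and $c-c_0=M+N-2L$. Together with $c=a+b$ and $c_0=a_0+b_0$, these give the two telescoping identities
\[
a(a-a_0)+b(b-b_0)+c(c-c_0)=3(aM+bN-cL),
\]
\[
a_0(a-a_0)+b_0(b-b_0)+c_0(c-c_0)=3(a_0M+b_0N-c_0L).
\]
Hence, with $\Delta:=(a-a_0)^2+(b-b_0)^2+(c-c_0)^2$,
\[
(a^2+b^2+c^2)-(a_0^2+b_0^2+c_0^2)=6(aM+bN-cL)-\Delta=6(a_0M+b_0N-c_0L)+\Delta .
\]

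For $\sigma>0$ the secular identities force $M,N,-L>0$. Then $aM<a\tan M=3r\sigma$, and likewise for $bN$ and $-cL$. The first form is therefore $<54r\sigma$, and the prefactor $2/(27r^2)$ turns this into exactly $4\sigma/r$. The second form is at least $(M+N-2L)^2>0$, which gives the strict positivity directly.

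This makes your Hellmann--Feynman step unnecessary. If you keep it, note that the phase triple is \emph{not} analytic at $\sigma=0$ on the $m=0$ branches. There the first row of the Jacobian of the secular system vanishes, and the phases grow like $\sqrt\sigma$ (cf.\ Remark~\ref{rem:sqrt-robin-scale}). You may therefore use differentiability only for $\sigma>0$, together with continuity at $\sigma=0$.

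The citation buys brevity. Your route buys a self-contained proof in a few lines from the exact secular system. It is uniform over all branches, including $m=0$ and $m=n$, and it shows where the constant $4$ and the strictness come from.
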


\begin{proof}
The strict estimate \eqref{eq:uniform_RN_gap} is proved in
Rudnick and Wigman~\cite{RudnickWigman2022};
\eqref{eq:uniform_RN_gap_compact} follows immediately.
\end{proof}

The spectral displacement estimate alone does not control the
corresponding eigenspaces.  For the small-Robin problem we shall also
need a comparison of the Robin and Neumann modal subspaces that is
uniform in the modal indices.

\subsection{A two-regime shell-splitting criterion}
\label{subsec:abstract_shell_splitting}

The shellwise ordering argument requires separate estimates in the
edge and bulk regimes.  In each regime, strict ordering follows once
the lower-order terms are dominated by the leading spectral gap.

\begin{lemma}[Comparison near a stationary point]
\label{lem:stationary_profile_comparison}
Let $I=[a,\theta_*]$ and let $f,g\in C^2(I)$.  Suppose that
\begin{equation}
    f'(\theta)<0
    \quad (a\leq\theta<\theta_*),
    \qquad
    f'(\theta_*)=g'(\theta_*)=0,
    \qquad
    f''(\theta_*)>0.
\label{eq:abstract_stationary_assumptions}
\end{equation}
Then, there exists $C>0$ such that
\begin{equation}
    |g(\theta')-g(\theta)|
    \leq
    C\bigl(f(\theta)-f(\theta')\bigr),
    \qquad
    a\leq\theta<\theta'\leq\theta_*.
\label{eq:abstract_stationary_comparison}
\end{equation}
\end{lemma}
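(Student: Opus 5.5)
Both sides of \eqref{eq:abstract_stationary_comparison} vanish at $\theta_*$ to second order, so the plan is to compare them with $(\theta'-\theta)$ times a factor measuring the distance to $\theta_*$. Since $f,g\in C^2(I)$ on the compact interval $I$, set $G:=\max_I|g''|$. Because $f'(\theta_*)=0$ and $f''(\theta_*)>0$, there is $\delta>0$ with $f''\geq f''(\theta_*)/2=:2\beta>0$ on $J:=[\theta_*-\delta,\theta_*]\cap I$. We then split into two regimes: inside $J$ (near the stationary point) and away from it.

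\emph{Near regime.} For $s\in J$, the mean value theorem and $f'(\theta_*)=0$ give
\[
    -f'(s)=\int_s^{\theta_*}f''\geq 2\beta(\theta_*-s),
\]
and likewise, from $g'(\theta_*)=0$,
\[
    |g'(s)|\leq G(\theta_*-s).
\]
Hence $|g'(s)|\leq (G/(2\beta))(-f'(s))$ on $J$.

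\emph{Far regime.} On the compact set $[a,\theta_*-\delta]$ (if nonempty), $f'$ is continuous and strictly negative by hypothesis, so $-f'\geq m>0$ there. Moreover $|g'|\leq \max_I|g'|=:G_1$, so $|g'|\leq (G_1/m)(-f')$ there.

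Combining the two regimes, $|g'(s)|\leq C(-f'(s))$ for all $s\in[a,\theta_*]$ with $C:=\max\{G/(2\beta),G_1/m\}$. Integrating from $\theta$ to $\theta'$ then gives
\[
    |g(\theta')-g(\theta)|
    \leq\int_\theta^{\theta'}|g'|
    \leq C\int_\theta^{\theta'}(-f')
    =C\bigl(f(\theta)-f(\theta')\bigr),
\]
which is \eqref{eq:abstract_stationary_comparison}.

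The only delicate point is the degeneracy at $\theta_*$, where $f'$ vanishes. This is handled by the pointwise ratio bound in the near regime, which uses $g'(\theta_*)=0$ and $f''(\theta_*)>0$ together. The $C^2$ hypothesis is used only to make $G$ finite and to keep $f''$ bounded below by continuity on $J$.
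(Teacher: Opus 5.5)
Your proof is correct and follows the paper's argument essentially step for step. Like the paper, you get $|g'|\le C(-f')$ near $\theta_*$ from $f''\ge c_0>0$ together with $f'(\theta_*)=g'(\theta_*)=0$, and away from $\theta_*$ from compactness and the strict negativity of $f'$, and then integrate from $\theta$ to $\theta'$; the only trivial point is that if $g$ is constant your $C$ is $0$, so take $\max\{C,1\}$ to meet the requirement $C>0$.
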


\begin{proof}
Choose $\varepsilon>0$ such that
$f''\geq c_0>0$ on
$[\theta_*-\varepsilon,\theta_*]$.  Since
$f'(\theta_*)=g'(\theta_*)=0$,
\[
    -f'(t)\geq c_0(\theta_*-t),
    \qquad
    |g'(t)|\leq C_0(\theta_*-t)
\]
on this interval.  Hence, $|g'|\leq C_1(-f')$ near $\theta_*$.  On
$[a,\theta_*-\varepsilon]$, the same estimate follows from the strict
negativity of $f'$ and compactness.  Integration from $\theta$ to
$\theta'$ gives \eqref{eq:abstract_stationary_comparison}.
\end{proof}

\begin{proposition}[Two-regime shell splitting]
\label{prop:abstract_two_regime_splitting}
Fix $\tau$.  For each sufficiently large $R$, let $\mathcal A_R$ be a
finite totally ordered family of spectral branches, with order denoted
by $\alpha\prec\beta$, and write
\begin{equation}
    \mathcal A_R
    =
    \mathcal E_R\,\dot\cup\,\mathcal B_R,
\label{eq:abstract_edge_bulk_partition}
\end{equation}
where $\mathcal E_R$ and $\mathcal B_R$ are respectively an initial
and a final segment.  Let $\Lambda_\alpha(\tau)$ be the perturbed
eigenvalue associated with $\alpha\in\mathcal A_R$, and let
$H_R:\mathcal A_R\to\mathbb R$ be strictly decreasing in the branch
order.  For $\alpha\prec\beta$, set
\begin{equation}
    \Delta H_R(\alpha,\beta)
    :=
    H_R(\alpha)-H_R(\beta)>0.
\label{eq:abstract_H_gap_definition}
\end{equation}
Suppose that there exists $A_\tau>0$ such that the following estimates
hold.

In the edge regime, let
$s:\mathcal E_R\to[1,\infty)$ and assume that
\begin{align}
    \Lambda_\beta(\tau)-\Lambda_\alpha(\tau)
    &=
    A_\tau\Delta H_R(\alpha,\beta)
    +\mathcal E^{\rm e}_{\alpha\beta},
\label{eq:abstract_edge_difference}\\
    \Delta H_R(\alpha,\beta)
    &\geq
    d_\tau^{\rm e}(s(\alpha)),
    \qquad
    |\mathcal E^{\rm e}_{\alpha\beta}|
    \leq
    r_\tau^{\rm e}(s(\alpha)),
\label{eq:abstract_edge_gap}
\end{align}
for $\alpha\in\mathcal E_R$ and $\alpha\prec\beta$, where
$d_\tau^{\rm e},r_\tau^{\rm e}>0$ and
\begin{equation}
    \frac{r_\tau^{\rm e}(s)}
         {d_\tau^{\rm e}(s)}
    \longrightarrow0
    \qquad (s\to\infty).
\label{eq:abstract_edge_scale_condition}
\end{equation}
In the bulk regime, suppose that there are ordered coordinates
$\theta_\alpha\in I=[a,\theta_*]$, functions
$f,g_\tau\in C^2(I)$, positive scales $a_R,b_R,d_R$, and
$e_R\geq0$ such that, whenever
$\alpha\prec\beta$ belong to $\mathcal B_R$,
\begin{align}
    \Lambda_\beta(\tau)-\Lambda_\alpha(\tau)
    ={}&
    A_\tau a_R
    \bigl(
        f(\theta_\alpha)-f(\theta_\beta)
    \bigr)
\notag\\
    &+
    b_R
    \bigl(
        g_\tau(\theta_\beta)-g_\tau(\theta_\alpha)
    \bigr)
    +\mathcal E^{\rm b}_{\alpha\beta},
\label{eq:abstract_bulk_difference}
\end{align}
with
\begin{align}
    |g_\tau(\theta_\beta)-g_\tau(\theta_\alpha)|
    &\leq
    C_\tau
    \bigl(
        f(\theta_\alpha)-f(\theta_\beta)
    \bigr),
\label{eq:abstract_bulk_profile_control}\\
    f(\theta_\alpha)-f(\theta_\beta)
    &\geq d_R,
    \qquad
    |\mathcal E^{\rm b}_{\alpha\beta}|
    \leq e_R.
\label{eq:abstract_bulk_discrete_gap}
\end{align}
Assume further that
\begin{equation}
    \frac{b_R}{a_R}\longrightarrow0,
    \qquad
    \frac{e_R}{a_Rd_R}\longrightarrow0
    \qquad (R\to\infty).
\label{eq:abstract_scale_conditions}
\end{equation}
Then, there exist $S_\tau,R_\tau<\infty$ such that
\begin{equation}
    \Lambda_\alpha(\tau)<\Lambda_\beta(\tau)
\label{eq:abstract_eventual_order}
\end{equation}
for every ordered edge pair with
$s(\alpha)\geq S_\tau$ and every ordered bulk pair with
$R\geq R_\tau$.
\end{proposition}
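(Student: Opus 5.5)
The proof treats the edge and bulk regimes separately; in each, the leading gap times $A_\tau$ absorbs all the remaining terms once the scale is large enough.

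\emph{Edge regime.} Let $\alpha\in\mathcal E_R$ and $\alpha\prec\beta$. Since $\Delta H_R(\alpha,\beta)>0$ and $A_\tau>0$, \eqref{eq:abstract_edge_difference} and \eqref{eq:abstract_edge_gap} give
\[
    \Lambda_\beta(\tau)-\Lambda_\alpha(\tau)
    \geq
    A_\tau d_\tau^{\rm e}(s(\alpha))-r_\tau^{\rm e}(s(\alpha))
    =
    d_\tau^{\rm e}(s(\alpha))
    \Bigl(A_\tau-\frac{r_\tau^{\rm e}(s(\alpha))}{d_\tau^{\rm e}(s(\alpha))}\Bigr).
\]
By \eqref{eq:abstract_edge_scale_condition}, choose $S_\tau$ such that $r_\tau^{\rm e}(s)/d_\tau^{\rm e}(s)\leq A_\tau/2$ for all $s\geq S_\tau$. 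The right-hand side is then at least $\tfrac12 A_\tau d_\tau^{\rm e}(s(\alpha))>0$. This is strict ordering, and $S_\tau$ does not depend on $R$ because the bound involves only $s(\alpha)$. Note that $\beta$ may lie in either $\mathcal E_R$ or $\mathcal B_R$; the hypotheses are imposed for every $\beta\succ\alpha$ whenever $\alpha$ is an edge branch, so no case split is needed.

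\emph{Bulk regime.} Let $\alpha\prec\beta$ both lie in $\mathcal B_R$ and write $\delta:=f(\theta_\alpha)-f(\theta_\beta)\geq d_R>0$. By \eqref{eq:abstract_bulk_difference} together with \eqref{eq:abstract_bulk_profile_control} and \eqref{eq:abstract_bulk_discrete_gap},
\[
    \Lambda_\beta(\tau)-\Lambda_\alpha(\tau)
    \geq
    A_\tau a_R\delta-b_RC_\tau\delta-e_R
    \geq
    a_R\delta\Bigl(A_\tau-C_\tau\frac{b_R}{a_R}-\frac{e_R}{a_Rd_R}\Bigr),
\]
where the last step uses $e_R\leq (e_R/d_R)\,\delta$. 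By \eqref{eq:abstract_scale_conditions}, choose $R_\tau$ such that for $R\geq R_\tau$ both $C_\tau b_R/a_R\leq A_\tau/4$ and $e_R/(a_Rd_R)\leq A_\tau/4$. The bracket is then at least $A_\tau/2$, so the difference is positive, since $a_R,\delta>0$. The key point is that $C_\tau$ is a single constant, uniform over all pairs and all $R$, which is what makes the $b_R$ term subordinate.

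\emph{Main obstacle.} This proposition is a purely quantitative absorption argument, so neither step is difficult. The only subtlety is ensuring uniformity in the bulk regime: the constant $C_\tau$ in \eqref{eq:abstract_bulk_profile_control} must not depend on $R$ or on the particular pair. In the intended application this is supplied by Lemma \ref{lem:stationary_profile_comparison}, applied to $f$ and $g_\tau$ on the fixed interval $I$, because the stationary point satisfies $g_\tau'(\theta_*)=0$. Here, however, it is simply assumed, and the proof is finished once $S_\tau$ and $R_\tau$ are chosen as above.
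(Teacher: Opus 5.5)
Your proof is correct and matches the paper's argument step for step. In the edge regime you bound the difference below by $d_\tau^{\rm e}(s)\bigl(A_\tau-r_\tau^{\rm e}(s)/d_\tau^{\rm e}(s)\bigr)$, and in the bulk you absorb the $b_R$ and $e_R$ terms into $A_\tau a_R\Delta f$ using the profile control and $\Delta f\geq d_R$; your use of $e_R\leq (e_R/d_R)\,\delta$ is slightly cleaner than the paper's division by $a_Rd_R$, which tacitly needs $A_\tau-C_\tau b_R/a_R\geq0$.
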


\begin{proof}
For an edge pair,
\eqref{eq:abstract_edge_difference} and
\eqref{eq:abstract_edge_gap} give
\[
    \Lambda_\beta(\tau)-\Lambda_\alpha(\tau)
    \geq
    d_\tau^{\rm e}(s)
    \left(
        A_\tau
        -
        \frac{r_\tau^{\rm e}(s)}
             {d_\tau^{\rm e}(s)}
    \right),
\]
where $s=s(\alpha)$.  This is positive for all sufficiently large
$s$ by \eqref{eq:abstract_edge_scale_condition}.
For a bulk pair, set
\[
    \Delta f
    =
    f(\theta_\alpha)-f(\theta_\beta)>0.
\]
By \eqref{eq:abstract_bulk_difference}--%
\eqref{eq:abstract_bulk_discrete_gap},
\[
    \Lambda_\beta(\tau)-\Lambda_\alpha(\tau)
    \geq
    a_R\Delta f
    \left(
        A_\tau-C_\tau\frac{b_R}{a_R}
    \right)
    -e_R.
\]
Since $\Delta f\geq d_R$,
\[
    \frac{
        \Lambda_\beta(\tau)-\Lambda_\alpha(\tau)
    }{
        a_Rd_R
    }
    \geq
    A_\tau
    -
    C_\tau\frac{b_R}{a_R}
    -
    \frac{e_R}{a_Rd_R},
\]
which is positive for all sufficiently large $R$ by
\eqref{eq:abstract_scale_conditions}.
\end{proof}

\begin{corollary}[Bounded one-shell coincidence complexity]
\label{cor:abstract_exceptional_core}
Assume the hypotheses of
Proposition~\ref{prop:abstract_two_regime_splitting}.  Suppose, in
addition, that on every shell at most $L_\tau<\infty$ branches fail
the ordering thresholds of the proposition.  Then every spectral
value is attained by at most
\begin{equation}
    B_\tau:=L_\tau+1
\label{eq:abstract_one_shell_complexity}
\end{equation}
branch classes on a single shell.
\end{corollary}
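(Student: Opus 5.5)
The idea is to split the branches on one shell into the at most $L_\tau$ branches that fail the ordering thresholds and the remaining ones, which are strictly ordered; the ordered ones can then supply at most one branch to any value.

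\emph{Setup on one shell.} Fix a shell, that is, $R$ and the family $\mathcal A_R=\mathcal E_R\,\dot\cup\,\mathcal B_R$, and a spectral value $\Lambda$. Let $\mathcal X_R\subset\mathcal A_R$ be the exceptional branches, those failing the thresholds of Proposition~\ref{prop:abstract_two_regime_splitting}:
\begin{itemize}
\item edge branches $\alpha$ with $s(\alpha)<S_\tau$;
\item all bulk branches when $R<R_\tau$.
\end{itemize}
By hypothesis $\#\mathcal X_R\le L_\tau$. Set $\mathcal G_R:=\mathcal A_R\setminus\mathcal X_R$.

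\emph{Key step: $\Lambda_\alpha(\tau)$ is injective on $\mathcal G_R$.} Take $\alpha\prec\beta$ in $\mathcal G_R$. Since $\mathcal E_R$ is an initial segment, either both lie in $\mathcal E_R$, or both lie in $\mathcal B_R$, or $\alpha\in\mathcal E_R$ and $\beta\in\mathcal B_R$.
\begin{itemize}
\item If $\alpha\in\mathcal E_R$, the edge estimates \eqref{eq:abstract_edge_difference}--\eqref{eq:abstract_edge_gap} are stated for every $\beta\succ\alpha$, including bulk $\beta$. Because $s(\alpha)\ge S_\tau$, the proposition gives $\Lambda_\alpha(\tau)<\Lambda_\beta(\tau)$.
\item If both lie in $\mathcal B_R$, then $R\ge R_\tau$ (otherwise they would be exceptional), and the bulk conclusion again gives strict inequality.
\end{itemize}
Hence $\Lambda_\alpha(\tau)$ is strictly increasing along the total order on $\mathcal G_R$. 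So at most one branch of $\mathcal G_R$ attains $\Lambda$.

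\emph{Counting.} The branches on this shell attaining $\Lambda$ are at most the $L_\tau$ exceptional ones plus one ordered one, giving at most $L_\tau+1=B_\tau$. If several branches form one systematic symmetry class, counting classes instead of branches only lowers this number.

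\emph{Main obstacle.} The only delicate point is the mixed edge--bulk pair. It is covered because the edge hypotheses quantify over all $\beta\succ\alpha$ in $\mathcal A_R$, not only over $\beta\in\mathcal E_R$. If that reading were too strong, I would instead compare the largest good edge branch with the smallest good bulk branch. In that case the bound could degrade to $L_\tau+2$, which is still uniform.
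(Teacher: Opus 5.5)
Your proposal is correct and follows the same route as the paper, whose proof simply observes that outside the at most $L_\tau$ exceptional branches the perturbed eigenvalues are strictly ordered, so a fixed value occurs there at most once, which gives $L_\tau+1$. Your check that mixed edge--bulk pairs are also covered (the edge hypotheses quantify over all $\beta\succ\alpha$, and $\mathcal E_R$ is an initial segment) is the detail the paper leaves implicit, so the $L_\tau+2$ fallback is not needed.
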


\begin{proof}
Outside the exceptional set the perturbed eigenvalues are strictly
ordered, so a fixed spectral value can occur there at most once.
At most $L_\tau$ further occurrences can belong to the exceptional
set.
\end{proof}

\begin{corollary}[Global coincidence bound]
\label{cor:abstract_bounded_displacement}
Let $\mathcal A$ be a discrete family of branches with a
reference-shell map
\[
    Q:\mathcal A\longrightarrow\mathcal J
\]
and reference levels
$\{E_j:j\in\mathcal J\}\subset\mathbb R$.  For $L>0$, set
\begin{equation}
    \mathcal N_0(L)
    :=
    \sup_{x\in\mathbb R}
    \#\{
        j\in\mathcal J:
        E_j\in[x-L,x]
    \}.
\label{eq:abstract_reference_density}
\end{equation}
Fix $\tau$ and suppose that
$\mathcal N_0(D_\tau)<\infty$ for some $D_\tau<\infty$, that
\begin{equation}
    0<
    \Lambda_\alpha(\tau)-E_{Q(\alpha)}
    <D_\tau
    \qquad
    (\alpha\in\mathcal A),
\label{eq:abstract_one_sided_displacement}
\end{equation}
and that every reference shell contributes at most $B_\tau$ branch
classes to a single perturbed spectral value.  Then
\begin{equation}
    \sup_{\Lambda}
    \#\{
        \alpha:
        \Lambda_\alpha(\tau)=\Lambda
    \}
    \leq
    B_\tau\,\mathcal N_0(D_\tau)
    <\infty.
\label{eq:abstract_global_coincidence_bound}
\end{equation}
\end{corollary}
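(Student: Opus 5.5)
Fix a value $\Lambda$ and let $\mathcal A(\Lambda):=\{\alpha\in\mathcal A:\Lambda_\alpha(\tau)=\Lambda\}$ be the set of branches attaining it. The plan is to sort these branches by their reference shell: first bound the number of admissible shells using the one-sided displacement, then apply the per-shell bound $B_\tau$ on each.

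\emph{Step 1: localize the reference levels.} For every $\alpha\in\mathcal A(\Lambda)$, the hypothesis \eqref{eq:abstract_one_sided_displacement} gives $0<\Lambda-E_{Q(\alpha)}<D_\tau$. Hence $E_{Q(\alpha)}\in(\Lambda-D_\tau,\Lambda)\subset[\Lambda-D_\tau,\Lambda]$. So the set of shells
\[
    \mathcal J(\Lambda):=\{Q(\alpha):\alpha\in\mathcal A(\Lambda)\}
\]
is contained in $\{j\in\mathcal J:E_j\in[x-D_\tau,x]\}$ with $x=\Lambda$. By the definition \eqref{eq:abstract_reference_density} with $L=D_\tau$, this gives $\#\mathcal J(\Lambda)\le\mathcal N_0(D_\tau)<\infty$. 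The bound is uniform in $\Lambda$ because $\mathcal N_0$ is a supremum over all $x$.

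\emph{Step 2: count within each shell and sum.} Partition $\mathcal A(\Lambda)=\bigsqcup_{j\in\mathcal J(\Lambda)}\{\alpha\in\mathcal A(\Lambda):Q(\alpha)=j\}$. By hypothesis, each reference shell contributes at most $B_\tau$ branch classes to the single value $\Lambda$; this is exactly the conclusion of Corollary~\ref{cor:abstract_exceptional_core} when one works in that setting. Summing over $j\in\mathcal J(\Lambda)$ gives $\#\mathcal A(\Lambda)\le B_\tau\,\mathcal N_0(D_\tau)$. Taking the supremum over $\Lambda$ then yields \eqref{eq:abstract_global_coincidence_bound}.

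\emph{Counting convention.} The only point needing care is that the quantity being counted matches the per-shell hypothesis. If the per-shell bound is stated for branch classes (desymmetrized) rather than individual branches, the left side of \eqref{eq:abstract_global_coincidence_bound} should be read as counting classes, with $Q$ assumed constant on classes. Then the same partition argument applies verbatim, and no other obstacle arises.
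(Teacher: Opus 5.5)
Your proof is correct and follows the paper's argument: the one-sided displacement confines every contributing reference level to $(\Lambda-D_\tau,\Lambda)$, so at most $\mathcal N_0(D_\tau)$ shells occur, and each contributes at most $B_\tau$ classes. Your remark that the left-hand side must count classes, with $Q$ constant on classes, to match the per-shell hypothesis is a fair clarification that the paper leaves implicit.
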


\begin{proof}
If $\Lambda_\alpha(\tau)=\Lambda$, then
\eqref{eq:abstract_one_sided_displacement} implies
\[
    E_{Q(\alpha)}
    \in
    (\Lambda-D_\tau,\Lambda).
\]
Hence, at most $\mathcal N_0(D_\tau)$ reference levels can contribute
to $\Lambda$.  Since each corresponding shell contributes at most
$B_\tau$ branch classes, the conclusion follows.
\end{proof}

\begin{remark}[Equilateral-triangle specialization]
\label{rem:abstract_criterion_triangle_map}
For the equilateral Robin spectrum, $\tau=\sigma$ and the leading
shell statistic is $H_R=F$.  In the edge regime, the leading discrete
gap is of order $m^{-3}$, while the remainder is
$O_\sigma(m^{-4})$.  In the bulk,
\[
    a_R=R^{-2},
    \qquad
    b_R=R^{-4},
    \qquad
    d_R\asymp R^{-2},
    \qquad
    e_R=O_\sigma(R^{-6}).
\]
Lemma~\ref{lem:stationary_profile_comparison} gives the required
profile comparison at the symmetric diagonal.

The exceptional branches lie in a bounded range of transverse
indices, while Proposition~\ref{prop:robin_neumann_gap} gives a
uniform Robin--Neumann displacement.  The Neumann reference levels
are
\[
    E_Q
    =
    \frac{4\pi^2}{27r^2}Q,
    \qquad
    Q\in\mathbb Z_{\geq0},
\]
and therefore,
\begin{equation}
    \mathcal N_0(L)
    \leq
    \left\lfloor
        \frac{27r^2}{4\pi^2}L
    \right\rfloor+1.
\label{eq:triangle_reference_density}
\end{equation}
Thus, the hypotheses of the preceding corollaries reduce to the
shellwise estimates established below.
\end{remark}

\subsection{Fixed-parameter angular expansion}
\label{subsec:fixed_sigma_angular_expansion}

We now keep $\sigma>0$ fixed and analyze the high-frequency geometry
of the exact secular system.  This is a different asymptotic regime
from the small-$\sigma$ perturbation used later.  Set
\begin{equation}
    h:=3r\sigma,
    \qquad
    \mathcal Q(m,n):=m^2+mn+n^2,
\label{eq:h_Q_def}
\end{equation}
and introduce the three-index notation
\[
    m_1=m,\qquad m_2=n,\qquad m_3=-(m+n),
\]
\[
    \mu_1=\mu,\qquad \mu_2=\nu,\qquad
    \mu_3=-\mu-\nu,
\]
with $M_1=M$, $M_2=N$, and $M_3=L$.  Then
\eqref{eq:secular_L}--\eqref{eq:secular_N} and
\eqref{eq:mu_nu} give the exact identities
\begin{equation}
    \mu_j\tan M_j=\frac{h}{\pi},
    \qquad
    \mu_j
    =m_j+\frac{2M_j-M_i-M_k}{\pi},
\label{eq:exact_three_index_system}
\end{equation}
for $\{i,j,k\}=\{1,2,3\}$.  Since the phases remain in the boxes
\eqref{eq:LMN_ranges}, this is equivalently
\begin{equation}
    M_j=\arctan\!\left(\frac{h}{\pi\mu_j}\right).
\label{eq:exact_arctan_phase}
\end{equation}
Thus, with
\[
    \Phi_h(t):=\frac1\pi
    \arctan\!\left(\frac{h}{\pi t}\right),
\]
the two positive coordinates satisfy the exact fixed-point system
\begin{align}
    \mu
    &=m+2\Phi_h(\mu)-\Phi_h(\nu)+\Phi_h(\mu+\nu),
\label{eq:fixed_point_mu}\\
    \nu
    &=n+2\Phi_h(\nu)-\Phi_h(\mu)+\Phi_h(\mu+\nu).
\label{eq:fixed_point_nu}
\end{align}
Define the angular statistic
\begin{equation}
    F(m,n)
    :=
    \frac1{m^2}+\frac1{n^2}+\frac1{(m+n)^2}
\label{eq:F_angular_statistic}
\end{equation}
for $m\geq1$, and
\begin{equation}
    A_\sigma
    :=
    \frac{4\sigma^2(1+r\sigma)}{\pi^2}.
\label{eq:A_sigma}
\end{equation}
In particular,
\begin{equation}
    A_\sigma>0
    \qquad\text{for every fixed }\sigma>0.
\label{eq:A_sigma_positive}
\end{equation}

\begin{lemma}[Fixed-$\sigma$ high-frequency angular expansion]
\label{lem:fixed_sigma_angular_expansion}
Fix $\sigma>0$.\\
\emph{(i) Bulk expansion.}
For every $\delta\in(0,1/\sqrt3)$, there exist
$R_{\sigma,\delta}$, a function
$g_{\sigma,\delta}\in C^2([\delta,1/\sqrt3])$, and a constant
$C_{\sigma,\delta}$ such that, whenever
\[
    R^2=\mathcal Q(m,n),\qquad
    \delta\leq x:=\frac mR\leq\frac1{\sqrt3},
    \qquad R\geq R_{\sigma,\delta},
\]
one has
\begin{equation}
\begin{split}
    \Lambda_{m,n}(\sigma)
    ={}&
    \frac{4\pi^2}{27r^2}R^2
    +\frac{4\sigma}{r}
    -A_\sigma R^{-2}f(x)
    +R^{-4}g_{\sigma,\delta}(x)
    +\mathcal E_{\sigma,R}(x),
\end{split}
\label{eq:bulk_fixed_sigma_expansion}
\end{equation}
where
\begin{equation}
    f(x)=\frac1{x^2(1-x^2)^2}
\label{eq:f_bulk}
\end{equation}
and
\begin{equation}
    \max_{0\leq j\leq2}
    \sup_{x\in[\delta,1/\sqrt3]}
    \left|\partial_x^j\mathcal E_{\sigma,R}(x)\right|
    \leq C_{\sigma,\delta}R^{-6}.
\label{eq:bulk_remainder_C2}
\end{equation}
The coefficient function is induced by a symmetric $C^2$ function of
$ (x,y)$ on the normalized shell.  In particular,
\begin{equation}
    g_{\sigma,\delta}'(1/\sqrt3)=0.
\label{eq:g_stationary_diagonal}
\end{equation}
\emph{(ii) Edge expansion.}
There exist $M_\sigma$ and $C_\sigma$ such that, uniformly for
$m\geq M_\sigma$ and $n\geq m$,
\begin{equation}
    \Lambda_{m,n}(\sigma)
    =
    \Lambda_{m,n}(0)
    +\frac{4\sigma}{r}
    -A_\sigma F(m,n)
    +\mathcal R_\sigma(m,n),
\label{eq:edge_fixed_sigma_expansion}
\end{equation}
with
\begin{equation}
    |\mathcal R_\sigma(m,n)|
    \leq C_\sigma m^{-4}.
\label{eq:edge_remainder}
\end{equation}
\end{lemma}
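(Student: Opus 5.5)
The plan is to solve the exact fixed-point system \eqref{eq:fixed_point_mu}--\eqref{eq:fixed_point_nu} perturbatively in the small quantities $\Phi_h(\mu_j)$, and then substitute the result into \eqref{eq:robin_eigenvalue}. To do this I would not expand $\mu^2+\mu\nu+\nu^2$ around $(m,n)$ directly. Expanding terms such as $(2m+n)\,\delta\mu$ multiplies errors of size $O(m^{-3})$ by the large factor $n$, and that would destroy uniformity in the edge regime $n\gg m$.

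\textbf{Step 1: symmetric rewriting.} Since $\mu_1+\mu_2+\mu_3=0$, we have $\mu^2+\mu\nu+\nu^2=\tfrac12\sum_j\mu_j^2$. By \eqref{eq:exact_three_index_system}, $\mu_j=m_j+3M_j/\pi-S/\pi$ with $S:=M_1+M_2+M_3$. Using $\sum_j m_j=0$ this gives the exact identity
\[
\sum_j\mu_j^2=\sum_j m_j^2+\frac{6}{\pi}\sum_j m_jM_j+\frac{9}{\pi^2}\sum_j M_j^2-\frac{3}{\pi^2}S^2 .
\]
The point is that each product $m_jM_j=m_j\arctan(h/(\pi\mu_j))$ is bounded individually. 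Writing $m_j=\mu_j-\epsilon_j$ with $\epsilon_j:=(3M_j-S)/\pi=O(1/m)$, it expands as
\[
m_jM_j=\frac{h}{\pi}-\frac{h^3}{3\pi^3\mu_j^2}-\epsilon_j\,\frac{h}{\pi\mu_j}+O(\mu_j^{-4}).
\]
Every correction now carries its own factor $\mu_j^{-2}$, $\epsilon_j/\mu_j$ or $M_iM_k$, and so is $O(m^{-2})$ with the next remainder $O(m^{-4})$. This holds uniformly in $n\geq m$ because $|\mu_j|\geq|\mu_1|\sim m$.

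\textbf{Step 2: leading terms.} The constant part gives $\tfrac{6}{\pi}\cdot 3\cdot\tfrac{h}{\pi}\cdot\tfrac12\cdot\tfrac{4\pi^2}{27r^2}=\tfrac{4h}{3r^2}=\tfrac{4\sigma}{r}$. This is the shift in \eqref{eq:edge_fixed_sigma_expansion}, and it matches the first-order count $(2m+n)\delta\mu+(2n+m)\delta\nu=9h/\pi^2$.

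\textbf{Step 3: second-order terms.} I would collect the $O(m_j^{-2})$ pieces using $M_j=h/(\pi m_j)+O(m_j^{-3})$ and $\epsilon_j=(h/\pi^2)(3/m_j-\sum_k 1/m_k)+O(m^{-3})$. The sources are:
\begin{itemize}
\item the cubic Taylor term of $\arctan$;
\item the $\epsilon_j/\mu_j$ cross term;
\item $\tfrac{9}{\pi^2}\sum_j M_j^2-\tfrac{3}{\pi^2}S^2$.
\end{itemize}
All of these are quadratic forms in $(1/m_1,1/m_2,1/m_3)$. Using $1/m_1+1/m_2+1/m_3=-\,(\text{pairwise products})\cdot\ldots$, i.e. the identity $\bigl(\sum_j 1/m_j\bigr)^2=\sum_j 1/m_j^2$ (valid since $\sum_j m_j=0$), everything collapses to a multiple of $\sum_j m_j^{-2}=F(m,n)$. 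I would verify that the coefficient equals $-A_\sigma=-4\sigma^2(1+r\sigma)/\pi^2$: the $h^2$ part gives $\sigma^2$ and the $h^3$ arctan part gives $r\sigma^3$. The remainder $\mathcal R_\sigma$ then consists of $O(\mu_j^{-4})$ terms together with the replacement errors $\mu_j\to m_j$. These are $O(m^{-4})$ uniformly, which proves (ii) once $M_\sigma$ is chosen so that the fixed-point iteration is a contraction (this needs $h/(\pi^2 m)$ small).

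\textbf{Step 4: bulk expansion (i).} In the bulk $m,n,m+n\asymp R$, so every $m_j^{-1}$ equals $R^{-1}$ times a smooth function of $x=m/R$ on $[\delta,1/\sqrt3]$, with $y=n/R$ determined by $x^2+xy+y^2=1$. The fixed point is real-analytic in $(1/m_j)$, so the Step 1 identity expands as a series in $R^{-2}$. Its $R^{-2}$ coefficient is $-A_\sigma F$ written in the variable $x$; I would check by direct algebra that this equals $-A_\sigma f(x)$, including the normalization of $f$. Its $R^{-4}$ coefficient defines $g_{\sigma,\delta}$. Differentiating the fixed-point equations twice in $x$ (implicit function theorem) gives the $C^2$ remainder bound \eqref{eq:bulk_remainder_C2}. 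Symmetry under $m\leftrightarrow n$, i.e. $x\leftrightarrow y$, fixes the diagonal $x=1/\sqrt3$, and this gives \eqref{eq:g_stationary_diagonal}.

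\textbf{Main obstacle.} I expect the hardest part to be the uniform edge remainder in (ii). It requires showing that no term of the form $n\cdot O(m^{-3})$ survives, and this is exactly why Step 1's rewriting in bounded products $m_jM_j$ is the key device. Close behind it is the bookkeeping needed to confirm that the second-order coefficient is exactly $A_\sigma$.
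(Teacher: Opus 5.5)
Your proposal is correct. For the bulk part (i) it is essentially the paper's argument. The paper rescales $\mu=RU$, $\nu=RV$ and notes that $\Psi_h(z,\rho)=\frac{\sqrt\rho}{\pi}\arctan\bigl(\frac{h\sqrt\rho}{\pi z}\bigr)$ is analytic in $\rho=R^{-2}$. That observation, which rests on the oddness of $\arctan$, is the precise justification for your assertion that only even powers of $R^{-1}$ occur; you state this without proof. The paper then applies the analytic implicit function theorem on $[\delta,1/\sqrt3]$ and uses the $x\leftrightarrow y$ symmetry exactly as you do.

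For the edge part (ii) your route is genuinely different. The paper rescales by $m$, sets $q=m/n\in(0,1]$, and solves the rescaled system analytically on a neighbourhood of $[0,1]\times\{0\}$. It writes $V=1+qW$ and removes the $1/q$ singularity of the rescaled quadratic form through the cancellation $U(0,\rho)-1+2W(0,\rho)=0$. Uniformity in $n/m$ then comes from compactness in $q$, and the coefficient is identified via $m^2F=(q^2+q+1)^2/(q+1)^2$.

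You instead use the exact identity $\sum_j\mu_j^2=\sum_j m_j^2+\frac6\pi\sum_j m_jM_j+\frac9{\pi^2}\sum_j M_j^2-\frac3{\pi^2}S^2$. In it no large index multiplies a small error, which is the same mechanism as the paper's cancellation, made explicit algebraically. You then expand each bounded product $m_jM_j=\mu_jM_j-\epsilon_jM_j$ directly. The identity $(\sum_j m_j^{-1})^2=\sum_j m_j^{-2}$ makes the collapse onto $F$ transparent.

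I checked your bookkeeping:
\begin{itemize}
\item the $\arctan$ cubic contributes $-\frac{2h^3}{\pi^4}F$;
\item the $\epsilon_j/\mu_j$ cross term contributes $-\frac{12h^2}{\pi^4}F$;
\item the quadratic phase terms contribute $+\frac{6h^2}{\pi^4}F$.
\end{itemize}
Together these give $-\frac{h^2(h+3)}{\pi^4}F$ for $\mu^2+\mu\nu+\nu^2$, which is $-A_\sigma F$ after multiplying by $\frac{4\pi^2}{27r^2}$. All neglected terms, such as $\epsilon_j m_j^{-3}$, $\epsilon_j M_j^3$, and the $O(m^{-3})$ corrections to $\epsilon_j$ divided by $\mu_j$, are $O(m^{-4})$ uniformly in $n\ge m$.

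Your route is more elementary. It needs only the a priori bound $|M_j|\le h/(\pi|\mu_j|)$ with $|\mu_j|\ge|m_j|-2$, not an implicit-function argument. It also yields the bulk $R^{-2}$ coefficient for free. The paper's analytic route gives a full convergent expansion uniformly in $q$, which a hand expansion would only reproduce with more work beyond second order. That extra information is not needed for (ii).
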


\begin{proof}
Throughout the proof $h=3r\sigma$ is fixed.  We stress that the
expansion parameter is the inverse frequency, not $\sigma$.  Introduce
\begin{equation}
    \Psi_h(z,\rho)
    :=\frac{\sqrt\rho}{\pi}
      \arctan\!\left(\frac{h\sqrt\rho}{\pi z}\right).
\label{eq:Psi_h_rho}
\end{equation}
Although written with $\sqrt\rho$, this function has the convergent
power series
\begin{equation}
    \Psi_h(z,\rho)
    =\sum_{k\geq0}
      \frac{(-1)^k h^{2k+1}}
      {(2k+1)\pi^{2k+2}z^{2k+1}}\,\rho^{k+1},
\label{eq:Psi_h_series}
\end{equation}
whenever $z$ stays in a compact subset of $(0,\infty)$ and $\rho$ is
sufficiently small.  Thus, $\Psi_h$ extends real-analytically across
$\rho=0$.  This elementary observation is what permits an analytic
implicit-function argument in $\rho=R^{-2}$ rather than in $R^{-1}$.\\
For the bulk regime, write
\[
    m=Rx,\qquad n=Ry,\qquad x^2+xy+y^2=1,
\]
where $y=y(x)>0$ and $x\in[\delta,1/\sqrt3]$.  Put
$\rho=R^{-2}$, $U=R^{-1}\mu$, and $V=R^{-1}\nu$.  Multiplying
\eqref{eq:fixed_point_mu}--\eqref{eq:fixed_point_nu} by $R^{-1}$ gives
the exact system
\begin{align}
 U={}&x+2\Psi_h(U,\rho)-\Psi_h(V,\rho)
        +\Psi_h(U+V,\rho),
\label{eq:bulk_scaled_U}\\
 V={}&y+2\Psi_h(V,\rho)-\Psi_h(U,\rho)
        +\Psi_h(U+V,\rho).
\label{eq:bulk_scaled_V}
\end{align}
Let $\mathcal F=(\mathcal F_1,\mathcal F_2)$ denote the left-hand
side minus the right-hand side of
\eqref{eq:bulk_scaled_U}--\eqref{eq:bulk_scaled_V}.  At $\rho=0$,
\[
    \mathcal F(x,y,0;x)=0,
    \qquad
    D_{(U,V)}\mathcal F(x,y,0;x)=I_2.
\]
Since $x$ ranges over a compact interval on which $x$, $y(x)$, and
$x+y(x)$ are bounded away from zero, the analytic implicit-function
theorem, followed by a finite-cover argument, gives a common
$\rho_{\sigma,\delta}>0$ for which $U$ and $V$ are jointly analytic in
$(\rho,x)$.  In particular, in $C^2([\delta,1/\sqrt3])$,
\begin{align}
 U(x,\rho)
 &=x+\rho u_1(x)+\rho^2u_2(x)+\rho^3u_3(x)
      +O_{\sigma,\delta}(\rho^4),
\label{eq:bulk_U_Taylor}\\
 V(x,\rho)
 &=y+\rho v_1(x)+\rho^2v_2(x)+\rho^3v_3(x)
      +O_{\sigma,\delta}(\rho^4).
\label{eq:bulk_V_Taylor}
\end{align}
The first coefficient pair obtained from
\eqref{eq:bulk_scaled_U}--\eqref{eq:bulk_scaled_V} is
\begin{align}
 u_1
 &=\frac{h}{\pi^2}
 \left(\frac2x-\frac1y+\frac1{x+y}\right),
\label{eq:u1_bulk}\\
 v_1
 &=\frac{h}{\pi^2}
 \left(\frac2y-\frac1x+\frac1{x+y}\right).
\label{eq:v1_bulk}
\end{align}
Set $S(U,V):=U^2+UV+V^2$.  Since
$S(x,y)=x^2+xy+y^2=1$, expansion of $S$ using
\eqref{eq:bulk_U_Taylor}--\eqref{eq:bulk_V_Taylor} gives
\begin{equation}
 S(U,V)
 =1+\rho S_1(x)+\rho^2S_2(x)+\rho^3S_3(x)
   +O_{\sigma,\delta}(\rho^4)
\label{eq:S_bulk_expansion}
\end{equation}
uniformly in $C^2$ with respect to $x$.  From
\eqref{eq:u1_bulk}--\eqref{eq:v1_bulk},
\begin{equation}
 S_1=(2x+y)u_1+(x+2y)v_1=\frac{9h}{\pi^2}.
\label{eq:S1_bulk}
\end{equation}
Equating the $\rho^2$ coefficients in
\eqref{eq:bulk_scaled_U}--\eqref{eq:bulk_scaled_V}, inserting
\eqref{eq:u1_bulk}--\eqref{eq:v1_bulk}, and then forming the combination
that occurs in $S_2$ yields the explicit identity
\begin{align}
 S_2
 &=(2x+y)u_2+(x+2y)v_2+u_1^2+u_1v_1+v_1^2
\notag\\
 &=-\frac{h^2(h+3)}{\pi^4}
   \frac{(x^2+xy+y^2)^2}{x^2y^2(x+y)^2}.
\label{eq:S2_bulk}
\end{align}
No expansion in $\sigma$ is involved in this coefficient calculation.
On the normalized shell $x^2+xy+y^2=1$ and
$1-x^2=y(x+y)$, so the rational factor in
\eqref{eq:S2_bulk} is precisely $f(x)$.
Since
\[
    \mu^2+\mu\nu+\nu^2
    =R^2S(U,V)=\rho^{-1}S(U,V),
\]
\eqref{eq:S_bulk_expansion}--\eqref{eq:S2_bulk} imply
\begin{equation}
\begin{split}
 \mu^2+\mu\nu+\nu^2
 ={}&R^2+\frac{9h}{\pi^2}
 -\frac{h^2(h+3)}{\pi^4}R^{-2}f(x)\\
 &+R^{-4}S_3(x)+O_{\sigma,\delta}(R^{-6}).
\end{split}
\label{eq:Q_fixed_sigma_expansion}
\end{equation}
Multiplication by $4\pi^2/(27r^2)$ and use of $h=3r\sigma$
give
\[
 \frac{4\pi^2}{27r^2}\frac{9h}{\pi^2}=\frac{4\sigma}{r},
 \qquad
 \frac{4\pi^2}{27r^2}
 \frac{h^2(h+3)}{\pi^4}
 =\frac{4\sigma^2(1+r\sigma)}{\pi^2}=A_\sigma.
\]
Thus, \eqref{eq:bulk_fixed_sigma_expansion} holds with
$g_{\sigma,\delta}=(4\pi^2/(27r^2))S_3$ and with the remainder
\eqref{eq:bulk_remainder_C2}.  The exact system is invariant under
interchange of the two positive coordinates.  Hence $S_3$ is the
restriction to the shell of a symmetric $C^2$ function
$G_\sigma(x,y)=G_\sigma(y,x)$.  At
$x_*=y_*=1/\sqrt3$ one has $y'(x_*)=-1$ and
$\partial_xG_\sigma(x_*,x_*)=\partial_yG_\sigma(x_*,x_*)$.  Therefore,
\[
 \frac{d}{dx}G_\sigma(x,y(x))\bigg|_{x=x_*}
 =\partial_xG_\sigma-\partial_yG_\sigma=0,
\]
which proves \eqref{eq:g_stationary_diagonal}.\\
 Next, we prove the edge estimate without losing uniformity when
$n/m\to\infty$.  Put
\[
    \rho=m^{-2},\qquad q=\frac mn\in(0,1],\qquad
    U=m^{-1}\mu,\qquad V=q m^{-1}\nu.
\]
For convenience, define
\begin{equation}
 \Psi_{h,q}(z,\rho)
 :=\frac{\sqrt\rho}{\pi}
   \arctan\!\left(\frac{hq\sqrt\rho}{\pi z}\right),
\label{eq:Psi_hq}
\end{equation}
which is jointly analytic in $(z,q,\rho)$ near
$z=1$, $q\in[0,1]$, $\rho=0$.  The exact fixed-point equations become
\begin{align}
 U={}&1+2\Psi_h(U,\rho)-\Psi_{h,q}(V,\rho)
       +\Psi_{h,q}(qU+V,\rho),
\label{eq:edge_scaled_U}\\
 V={}&1+2q\Psi_{h,q}(V,\rho)-q\Psi_h(U,\rho)
       +q\Psi_{h,q}(qU+V,\rho).
\label{eq:edge_scaled_V}
\end{align}
At $\rho=0$ the solution is $(U,V)=(1,1)$ and the Jacobian in
$(U,V)$ is $I_2$, uniformly for $q\in[0,1]$.  Compactness of the
$q$-interval and the analytic implicit-function theorem therefore give
solutions $U(q,\rho)$ and $V(q,\rho)$ analytic on a common neighborhood
of $[0,1]\times\{0\}$.  Equation \eqref{eq:edge_scaled_V} also shows
that $V-1$ is divisible by $q$ in this analytic ring; write
\begin{equation}
    V(q,\rho)=1+qW(q,\rho)
\label{eq:V_qW}
\end{equation}
with $W$ analytic.  Dividing \eqref{eq:edge_scaled_V} by $q$ and setting
$q=0$ gives
\[
    W(0,\rho)=-\Psi_h(U(0,\rho),\rho),
\]
whereas \eqref{eq:edge_scaled_U} at $q=0$ gives
$U(0,\rho)-1=2\Psi_h(U(0,\rho),\rho)$.  Consequently
\begin{equation}
    U(0,\rho)-1+2W(0,\rho)=0.
\label{eq:edge_divisibility_identity}
\end{equation}
The cancellation in \eqref{eq:edge_divisibility_identity} is exactly
what removes the apparent singularity of the quadratic form at
$q=0$.  Indeed, using \eqref{eq:V_qW},
\begin{align}
 &U^2+\frac{UV}{q}+\frac{V^2}{q^2}
 -\left(1+\frac1q+\frac1{q^2}\right)
\notag\\
 &\qquad=
 U^2-1+UW+W^2+
 \frac{U-1+2W}{q}.
\label{eq:edge_energy_regularization}
\end{align}
The last quotient extends analytically to $q=0$ by
\eqref{eq:edge_divisibility_identity}.  Hence the left-hand side of
\eqref{eq:edge_energy_regularization}, evaluated on the analytic
solution, has a uniform expansion
\begin{equation}
 \rho C_1(q)+\rho^2C_2(q)+O_\sigma(\rho^3),
 \qquad q\in[0,1].
\label{eq:edge_energy_rho_expansion}
\end{equation}
Direct coefficient matching in
\eqref{eq:edge_scaled_U}--\eqref{eq:edge_scaled_V} gives
\begin{equation}
 C_1(q)=\frac{9h}{\pi^2},
 \qquad
 C_2(q)=-\frac{h^2(h+3)}{\pi^4}
 \frac{(q^2+q+1)^2}{(q+1)^2}.
\label{eq:edge_C1_C2}
\end{equation}
Since $n=m/q$,
\begin{equation}
 m^2F(m,n)
 =1+q^2+\frac{q^2}{(1+q)^2}
 =\frac{(q^2+q+1)^2}{(q+1)^2}.
\label{eq:q_F_identity}
\end{equation}
Finally,
\[
 \mu^2+\mu\nu+\nu^2
 =m^2\left(U^2+\frac{UV}{q}+\frac{V^2}{q^2}\right).
\]
Multiplying \eqref{eq:edge_energy_rho_expansion} by $m^2=\rho^{-1}$,
using \eqref{eq:edge_C1_C2}--\eqref{eq:q_F_identity}, and then restoring
the factor $4\pi^2/(27r^2)$ proves
\eqref{eq:edge_fixed_sigma_expansion}--\eqref{eq:edge_remainder}, with
a constant independent of $q$, hence independent of $n/m$.
\end{proof}

\begin{remark}
\label{rem:fixed_sigma_not_small_sigma}
Lemma \ref{lem:fixed_sigma_angular_expansion} is an expansion in the
inverse frequency with $\sigma>0$ held fixed.  No smallness assumption
on $\sigma$ is used.  In particular, the positivity
\eqref{eq:A_sigma_positive} holds for every finite positive Robin
parameter.  We will use only the exact secular equations and this
fixed-parameter expansion; no third-order Taylor expansion in $\sigma$
is needed below.
\end{remark}

\subsection{Within-shell splitting at fixed Robin parameter}
\label{subsec:within_shell_splitting}

The first angular statistic in
\eqref{eq:edge_fixed_sigma_expansion} has a simple monotonicity along a
Neumann shell.  If
\[
    R^2=m^2+mn+n^2,
    \qquad x=\frac mR,
\]
then a direct elimination of $n/R$ gives
\begin{equation}
    F(m,n)=R^{-2}f(x),
    \qquad
    f(x)=\frac1{x^2(1-x^2)^2}.
\label{eq:F_shell_formula}
\end{equation}
Moreover,
\begin{equation}
    f'(x)
    =
    -\frac{2(1-3x^2)}{x^3(1-x^2)^3}<0,
    \qquad 0<x<\frac1{\sqrt3},
\label{eq:f_strict_monotonicity}
\end{equation}
and $f''(x)>0$ on this interval.  This is the angular ordering used by
Rudnick and Wigman in their analysis of equal Neumann shells; see
\cite[Proposition~7.2]{RudnickWigman2022}.

\begin{proposition}[Eventual within-shell simplicity]
\label{prop:eventual_within_shell_simplicity}
Fix $\sigma>0$. There exists an integer $M_\sigma^*\geq1$ such that,
whenever
\begin{equation}
    m^2+mn+n^2
    =
    {m'}^2+m'n'+{n'}^2,
\label{eq:same_neumann_shell}
\end{equation}
and
\begin{equation}
    M_\sigma^*\leq m<m'\leq n'<n,
\label{eq:ordered_same_shell_indices}
\end{equation}
one has
\begin{equation}
    \Lambda_{m,n}(\sigma)
    <
    \Lambda_{m',n'}(\sigma).
\label{eq:eventual_same_shell_ordering}
\end{equation}
\end{proposition}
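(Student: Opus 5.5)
The plan is to apply Proposition~\ref{prop:abstract_two_regime_splitting} on a single Neumann shell $R^2=m^2+mn+n^2$. The branches are ordered by increasing $x=m/R\in(0,1/\sqrt3]$, and the statistic is $H_R=F=R^{-2}f(x)$, which is strictly decreasing in that order by \eqref{eq:F_shell_formula}--\eqref{eq:f_strict_monotonicity}. The hypothesis $m<m'\le n'<n$ on a common shell says exactly that $x<x'\le 1/\sqrt3$, so $(m,n)\prec(m',n')$. Since $m,m'$ are distinct integers, $x'-x\ge R^{-1}$. I would fix a small $\delta>0$, chosen so that $f'(t)\le -c\,t^{-3}$ on $(0,2\delta]$; this is possible because $f'(t)\sim -2t^{-3}$ as $t\to0$. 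I then split the pairs according to whether $x<\delta$ (edge) or $x\ge\delta$ (bulk). In the bulk both $x,x'\in[\delta,1/\sqrt3]$, so part~(i) of Lemma~\ref{lem:fixed_sigma_angular_expansion} applies to both branches. In the edge case I use part~(ii), which holds for all $m\ge M_\sigma$ and $n\ge m$ with no restriction on $n/m$, and so applies to both branches regardless of where $x'$ lies.

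\emph{Edge pairs.} Subtracting \eqref{eq:edge_fixed_sigma_expansion} for the two branches, and using that the Neumann values agree on the shell, gives
\[
\Lambda_{m',n'}(\sigma)-\Lambda_{m,n}(\sigma)=A_\sigma\bigl(F(m,n)-F(m',n')\bigr)+\mathcal R_\sigma(m',n')-\mathcal R_\sigma(m,n).
\]
Since $m'>m$, the remainder is at most $2C_\sigma m^{-4}$. For the gap, monotonicity of $f$ gives $F(m,n)-F(m',n')\ge R^{-2}\bigl(f(x)-f(x+R^{-1})\bigr)$. When $x<\delta$ and $R^{-1}\le\delta$, the point $x+R^{-1}$ stays in $(0,2\delta]$, so the derivative bound yields a gap of at least $c'R^{-2}\cdot R^{-1}x^{-3}\cdot 2^{-3}\gtrsim m^{-3}$. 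So $d^{\rm e}(s)\asymp s^{-3}$ and $r^{\rm e}(s)\asymp s^{-4}$ with $s=m$, and the edge condition \eqref{eq:abstract_edge_scale_condition} holds. Ordering follows once $m\ge S_\sigma$.

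\emph{Bulk pairs.} Subtracting \eqref{eq:bulk_fixed_sigma_expansion} gives the form \eqref{eq:abstract_bulk_difference} with $a_R=R^{-2}$, $b_R=R^{-4}$, $g_\tau=g_{\sigma,\delta}$ and $e_R=2C_{\sigma,\delta}R^{-6}$. The profile control \eqref{eq:abstract_bulk_profile_control} comes from Lemma~\ref{lem:stationary_profile_comparison} on $[\delta,1/\sqrt3]$, using $f'<0$ before $x_*=1/\sqrt3$, $f'(x_*)=0$, $f''>0$, and \eqref{eq:g_stationary_diagonal}.

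The step I expect to be the main obstacle is the discrete gap $d_R$ near the diagonal, where $f'$ degenerates. There I would use convexity: $f'(t)\le -c_0(x_*-t)$ near $x_*$, and integrating from $x$ to $x'$ gives
\[
f(x)-f(x')\ge \tfrac{c_0}{2}(x'-x)^2\ge \tfrac{c_0}{2}R^{-2}.
\]
Away from $x_*$ the gap is even larger, since $|f'|$ is bounded below there. Hence $d_R\gtrsim R^{-2}$. This gives $b_R/a_R=R^{-2}\to0$ and $e_R/(a_Rd_R)=O(R^{-2})\to0$, and Proposition~\ref{prop:abstract_two_regime_splitting} yields ordering for $R\ge R_\sigma$. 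The stronger edge expansion is useless here: its remainder $m^{-4}\sim R^{-4}$ is comparable to this gap, which is why the bulk expansion, with remainder $R^{-6}$, is needed.

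\emph{Choice of $M_\sigma^*$.} In the bulk regime $m\ge\delta R$, so $R\ge R_\sigma$ is guaranteed by $m\ge\delta^{-1}R_\sigma$. The condition $R^{-1}\le\delta$ is implied by $m\ge1$ together with $m<\delta R$ in the edge case, up to adjusting constants. It therefore suffices to take
\[
M_\sigma^*:=\max\bigl\{M_\sigma,\,S_\sigma,\,\lceil\delta^{-1}\max(R_\sigma,R_{\sigma,\delta})\rceil,\,\lceil\delta^{-1}\rceil\bigr\}.
\]
With this choice every pair allowed by \eqref{eq:ordered_same_shell_indices} falls in one of the two ordered regimes, which gives \eqref{eq:eventual_same_shell_ordering}.
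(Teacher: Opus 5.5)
Your proposal is correct and follows the paper's proof. It uses the same edge/bulk split at $x=m/R<\delta$, applies the edge expansion (ii) to both branches with an $m^{-3}$ gap against an $O_\sigma(m^{-4})$ remainder, and applies the bulk expansion (i) with Lemma~\ref{lem:stationary_profile_comparison} feeding Proposition~\ref{prop:abstract_two_regime_splitting}. The differences are minor: you derive the bulk gap $f(x)-f(x')\gtrsim R^{-2}$ directly from $-f'(t)\ge c\,(x_*-t)$ and $x'-x\ge R^{-1}$ (as the paper does for rectangles) rather than citing Rudnick--Wigman, you replace the paper's two edge subcases with the single bound via $f(x)-f(x+R^{-1})$, and in the bulk $R\ge R_\sigma$ follows simply from $R\ge\sqrt3\,m$ rather than from $m\ge\delta R$.
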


\begin{proof}
Let $R^2$ denote the common value in
\eqref{eq:same_neumann_shell}, and put
$x=m/R$, $x'=m'/R$.  By
\eqref{eq:f_strict_monotonicity},
\begin{equation}
    \Delta F
    :=F(m,n)-F(m',n')
    =R^{-2}\bigl(f(x)-f(x')\bigr)>0.
\label{eq:DeltaF_positive}
\end{equation}
We verify the two regimes of Proposition
\ref{prop:abstract_two_regime_splitting} using the expansions in
Lemma \ref{lem:fixed_sigma_angular_expansion}.\\
First consider the edge region $x<\delta$, with $\delta>0$ chosen
small. If $m'\geq2m$, then
\[
    F(m,n)\geq m^{-2},
    \qquad
    F(m',n')\leq \frac{9}{4}{m'}^{-2},
\]
so $\Delta F\geq c m^{-2}$ for an absolute $c>0$.  If instead
$m'<2m$, then $x'<2\delta$.  Choosing $\delta$ so small that
$-f'(t)\geq c_0t^{-3}$ on $(0,2\delta]$, the mean-value theorem and
$m'-m\geq1$ give
\[
    \Delta F
    =R^{-2}[-f'(\xi)]\frac{m'-m}{R}
    \geq c_1m^{-3}
\]
for some $\xi\in(x,x')$.  In both edge subcases,
\eqref{eq:edge_fixed_sigma_expansion}--\eqref{eq:edge_remainder} yield
\[
\begin{split}
    \Lambda_{m',n'}(\sigma)-\Lambda_{m,n}(\sigma)
    &=A_\sigma\Delta F+O_\sigma(m^{-4})>0
\end{split}
\]
for all sufficiently large $m$.
We now treat the bulk region $x\geq\delta$.  By
\eqref{eq:bulk_fixed_sigma_expansion},
\begin{equation}
\begin{split}
    \Lambda_{m',n'}(\sigma)-\Lambda_{m,n}(\sigma)
    ={}&
    A_\sigma R^{-2}\bigl(f(x)-f(x')\bigr)\\
    &+R^{-4}\bigl(g_{\sigma,\delta}(x')
                   -g_{\sigma,\delta}(x)\bigr)
    +O_{\sigma,\delta}(R^{-6}).
\end{split}
\label{eq:bulk_difference_expansion}
\end{equation}
Set $x_*=1/\sqrt3$.  By
\eqref{eq:f_strict_monotonicity}, $f'<0$ on
$[\delta,x_*)$, while $f'(x_*)=0$ and $f''(x_*)>0$.
Together with \eqref{eq:g_stationary_diagonal}, Lemma
\ref{lem:stationary_profile_comparison} therefore gives
\begin{equation}
    |g_{\sigma,\delta}(x')-g_{\sigma,\delta}(x)|
    \leq
    C'_{\sigma,\delta}\bigl(f(x)-f(x')\bigr)
\label{eq:g_difference_vs_f_difference}
\end{equation}
throughout $\delta\leq x<x'\leq x_*$.  Finally,
Proposition~7.2 of Rudnick and Wigman gives, in this bulk regime, the
quantitative lattice separation
\begin{equation}
    \Delta F\geq c_\delta R^{-4}
\label{eq:bulk_DeltaF_lower}
\end{equation}
for distinct representations of the same shell.  Equivalently,
$f(x)-f(x')\geq c_\delta R^{-2}$.  Thus the bulk hypotheses of Proposition
\ref{prop:abstract_two_regime_splitting} hold with
\[
 a_R=R^{-2},\qquad b_R=R^{-4},\qquad
 d_R=c_\delta R^{-2},\qquad
 e_R=C_{\sigma,\delta}R^{-6},
\]
so $b_R/a_R\to0$ and $e_R/(a_Rd_R)\to0$.  In the edge regime the
preceding estimates give a uniform gap $\Delta F\geq c m^{-3}$ and the
remainder is $O_\sigma(m^{-4})$, hence
$r_\sigma^{\rm e}(m)/d_\sigma^{\rm e}(m)\to0$.
Proposition \ref{prop:abstract_two_regime_splitting}, together with
$A_\sigma>0$, therefore gives strict ordering whenever the transverse
index $m$ exceeds a fixed threshold $M_\sigma^*$.  The remaining bounded
range $0\leq m<M_\sigma^*$ is not part of the asymptotic ordering and is
handled by the one-shell coincidence count below.  This proves
\eqref{eq:eventual_same_shell_ordering}.
\end{proof}

\begin{corollary}[Uniform coincidence bound on one Neumann shell]
\label{cor:within_shell_coincidence_bound}
For every fixed $\sigma>0$, there exists a finite integer $B_\sigma$
such that, for every integer $Q\geq0$ and every Robin eigenvalue
$\Lambda$, the number of desymmetrized representatives $0\leq m\leq n$
on that shell contributing to $\Lambda$ satisfies
\begin{equation}
\#\left\{
    (m,n):0\leq m\leq n,\;
    \mathcal Q(m,n)=Q,\;
    \Lambda_{m,n}(\sigma)=\Lambda
\right\}
\leq B_\sigma.
\label{eq:within_shell_Bsigma}
\end{equation}
One may take $B_\sigma=M_\sigma^*+1$ after increasing
$M_\sigma^*$ if necessary.
\end{corollary}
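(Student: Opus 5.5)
Fix a shell $Q=\mathcal Q(m,n)$ and a Robin value $\Lambda$, and split the desymmetrized representatives $0\le m\le n$ with $\mathcal Q(m,n)=Q$ according to the transverse index: the \emph{exceptional} ones with $0\le m<M_\sigma^*$, and the \emph{regular} ones with $m\ge M_\sigma^*$. The plan is to show that each regular representative is determined by its Robin value, and that the exceptional ones are few because each value of $m$ occurs at most once on a shell. Corollary~\ref{cor:abstract_exceptional_core} then applies with $L_\sigma=M_\sigma^*$.

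\textbf{Step 1 (regular part).} On a fixed shell, the representatives with $m\le n$ are totally ordered by $m$. For fixed $m$, $n$ is the unique nonnegative root of $n^2+mn+m^2-Q=0$, so distinct representatives have distinct $m$. Take two regular representatives $(m,n)\ne(m',n')$ with $m<m'$. The shell identity forces $n'<n$: if $n'\ge n$ then $\mathcal Q(m',n')>\mathcal Q(m,n)$. Together with $m'\le n'$, this is exactly the configuration \eqref{eq:ordered_same_shell_indices}. Proposition~\ref{prop:eventual_within_shell_simplicity} then gives $\Lambda_{m,n}(\sigma)<\Lambda_{m',n'}(\sigma)$. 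So $m\mapsto\Lambda_{m,n(m)}(\sigma)$ is strictly increasing on the regular part, and at most one regular representative attains $\Lambda$.

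\textbf{Step 2 (exceptional part).} For each $m\in\{0,\dots,M_\sigma^*-1\}$ there is at most one $n$ with $\mathcal Q(m,n)=Q$, by the root uniqueness of Step 1. So at most $M_\sigma^*$ exceptional representatives lie on the shell, whatever $Q$ is. No spectral information is needed here; the count is purely arithmetic.

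Adding the two steps gives at most $M_\sigma^*+1$ representatives, which yields \eqref{eq:within_shell_Bsigma} with $B_\sigma=M_\sigma^*+1$. This is uniform in $Q$ and $\Lambda$ because $M_\sigma^*$ depends only on $\sigma$. Increasing $M_\sigma^*$ keeps Proposition~\ref{prop:eventual_within_shell_simplicity} valid, since its conclusion only gets weaker, so the phrase ``after increasing $M_\sigma^*$ if necessary'' costs nothing.

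\textbf{Main obstacle.} The delicate point is whether Proposition~\ref{prop:eventual_within_shell_simplicity} really covers \emph{all} ordered pairs with $m\ge M_\sigma^*$. That means pairs that are edge--edge, bulk--bulk, and mixed with $x<\delta\le x'$. Its proof treats the edge regime by a lower bound on $\Delta F$ that does not require $x'<\delta$; the case $m'\ge 2m$ handles mixed pairs. It treats the bulk regime only for $R$ large, but $m\ge M_\sigma^*$ forces $R\ge m$ to be large. If a mixed case were not covered, I would fall back on the edge expansion \eqref{eq:edge_fixed_sigma_expansion}, which is valid for all $m\ge M_\sigma$. For $x'\ge\delta$ one has $\Delta F\gtrsim_\delta m^{-2}$ against an $O(m^{-4})$ remainder, so the mixed ordering follows after enlarging $M_\sigma^*$.
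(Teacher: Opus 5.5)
Your proposal is correct and follows the paper's proof essentially verbatim: Proposition~\ref{prop:eventual_within_shell_simplicity} leaves at most one coincident representative with $m\ge M_\sigma^*$, and uniqueness of the nonnegative root of $n^2+mn+m^2=Q$ bounds the exceptional representatives by $M_\sigma^*$, giving $B_\sigma=M_\sigma^*+1$. Your explicit check that the shell identity forces $m<m'\le n'<n$ is a detail the paper leaves implicit. The fallback in your last paragraph is not needed, because the paper's edge case already covers mixed pairs by splitting into $m'\ge 2m$ or $x'<2\delta$. As written, that fallback also overstates the gap: when $x$ is just below $\delta$ you only get $\Delta F\gtrsim m^{-3}$, not $m^{-2}$, though this still dominates the $O(m^{-4})$ remainder.
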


\begin{proof}
By Proposition \ref{prop:eventual_within_shell_simplicity}, at most one
coincident class on a fixed shell can have $m\geq M_\sigma^*$.  All
remaining classes have $0\leq m<M_\sigma^*$.  This is precisely the
bounded-exceptional-core mechanism of Corollary
\ref{cor:abstract_exceptional_core}.  For fixed $Q$ and fixed $m$, the
quadratic equation
\[
    n^2+mn+m^2=Q
\]
has at most one solution with $n\geq m$.  The claim follows, including
the possible $m=0$ class.
\end{proof}

\subsection{Global Robin coincidence complexity}
\label{subsec:global_robin_complexity}

We now combine the within-shell estimate with the exact
Robin--Neumann displacement.  This is the only point where different
Neumann shells must be compared.

\begin{proof}[Proof of Theorem \ref{thm:global_robin_complexity}]
Fix $\sigma>0$ and a Robin eigenvalue $\Lambda$.  If a modal class
represented by $(m,n)$ contributes to $E_{\Lambda,\sigma}$, then
$\Lambda=\Lambda_{m,n}(\sigma)$.  By
Proposition \ref{prop:robin_neumann_gap} and
\eqref{eq:neumann_limit_eigenvalue},
\begin{equation}
    \Lambda-\frac{4\sigma}{r}
    <
    \frac{4\pi^2}{27r^2}\,\mathcal Q(m,n)
    <
    \Lambda.
\label{eq:cross_shell_window}
\end{equation}
This is precisely the one-sided displacement hypothesis of
Corollary \ref{cor:abstract_bounded_displacement} for the reference levels
\[
    E_Q=\frac{4\pi^2}{27r^2}Q,
    \qquad D_\sigma^{\rm disp}=\frac{4\sigma}{r}.
\]
By \eqref{eq:triangle_reference_density}, their local counting function
satisfies
\begin{equation}
    N_\sigma
    :=\mathcal N_0(D_\sigma^{\rm disp})
    \leq
    \left\lfloor\frac{27r\sigma}{\pi^2}\right\rfloor+1.
\label{eq:N_sigma_shell_count}
\end{equation}
Equivalently, after rescaling by the Neumann shell spacing, the admissible
integer shell values lie in an interval of length
\begin{equation}
    D_\sigma:=\frac{27r\sigma}{\pi^2}.
\label{eq:D_sigma_shell_window}
\end{equation}
Corollary \ref{cor:within_shell_coincidence_bound} provides the one-shell
bound $B_\sigma$, so Corollary \ref{cor:abstract_bounded_displacement}
gives
\begin{equation}
    \kappa_\sigma(\Lambda)
    \leq B_\sigma N_\sigma,
\label{eq:kappa_global_bound}
\end{equation}
and taking the supremum over $\Lambda$ gives
\begin{equation}
    K_\sigma
    \leq
    B_\sigma
    \left(
       \left\lfloor\frac{27r\sigma}{\pi^2}\right\rfloor+1
    \right)
    <\infty.
\label{eq:Ksigma_explicit_bound}
\end{equation}
This proves the theorem.
\end{proof}

\begin{proof}[Proof of Theorem \ref{thm:fixed_robin}]
Fix $\sigma>0$.  Theorem \ref{thm:global_robin_complexity} gives
$K_\sigma<\infty$, while Lemma
\ref{lem:robin-modal-class-complexity} gives
\[
    \mathfrak F_T\bigl(\mathcal R_{\mathfrak c}(\sigma)\bigr)
    \leq N_{\mathrm{mod}}
\]
for every desymmetrized Robin modal class.  Applying Corollary
\ref{cor:modal_complexity_observation} to the complete modal
decomposition
\eqref{eq:fixed-robin-complete-modal-decomposition} yields
\begin{equation}
    \mathfrak F_T(E_{\Lambda,\sigma})
    \leq
    N_{\mathrm{mod}}\kappa_\sigma(\Lambda)
    \leq N_{\mathrm{mod}}K_\sigma
\label{eq:fixed_robin_exp_complexity}
\end{equation}
for every Robin eigenvalue $\Lambda$, and hence
\[
    \|u\|_{L^2(V)}^2
    \geq
    \gamma_{T,V,N_{\mathrm{mod}}K_\sigma}
    \|u\|_{L^2(T)}^2
\]
for every $u\in E_{\Lambda,\sigma}$.  Therefore
\[
    c_V(\sigma)
    \geq
    \gamma_{T,V,N_{\mathrm{mod}}K_\sigma}>0,
\]
which proves Theorem \ref{thm:fixed_robin}.
\end{proof}

\subsection{Small-Robin multiplicity structure}
\label{subsec:small_robin_multiplicity}

For sufficiently small positive Robin parameter, the multiplicity
structure is determined by the symmetry of the triangle.  Rudnick and
Wigman~\cite[Theorem~1.5]{RudnickWigman2022} proved that, after
desymmetrization, the Robin spectrum is simple.

\begin{proposition}[Small-Robin modal separation]
\label{prop:small_robin_modal_separation}
There exists $\sigma_{\mathrm{sep}}>0$ such that, for every
$0<\sigma<\sigma_{\mathrm{sep}}$, distinct desymmetrized Robin index
classes have distinct eigenvalues.  Consequently, each Robin
eigenvalue is associated with a unique desymmetrized modal class.
\end{proposition}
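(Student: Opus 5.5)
The plan is to obtain Proposition~\ref{prop:small_robin_modal_separation} from Rudnick--Wigman's simplicity theorem~\cite[Theorem~1.5]{RudnickWigman2022}. The remaining work is to make their notion of desymmetrization match the modal classes of $\mathcal I_\sigma(\Lambda)$ in \eqref{eq:robin_index_classes}.

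First, I will fix the dictionary. McCartin's branches are indexed by $0\le m\le n$. Each branch carries the symmetric mode $T^s_{m,n,\sigma}$ and, when $m<n$, the antisymmetric mode $T^a_{m,n,\sigma}$. By Remark~\ref{rem:modal-parity-degeneracy}, the modal space therefore has dimension one for $m=n$ and two for $m<n$. These two parity copies are exactly the ``systematic'' multiplicity removed by the equivalence relation $\sim$, and they are also what Rudnick--Wigman factor out.

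Next, I will recall what their theorem gives. There is $\sigma_{\rm RW}>0$ such that for $0<\sigma<\sigma_{\rm RW}$ the map $(m,n)\mapsto\Lambda_{m,n}(\sigma)$ is injective on $\{0\le m\le n\}$. Two distinct classes $(m,n)\ne(m',n')$ can share a Robin eigenvalue in two ways.

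Suppose first that they lie on different Neumann shells, $\mathcal Q(m,n)\ne\mathcal Q(m',n')$. The Neumann levels then differ by at least $4\pi^2/(27r^2)$. Proposition~\ref{prop:robin_neumann_gap} moves each level by less than $4\sigma/r$. So $\sigma<\pi^2/(27r)$ already forces the two Robin eigenvalues to differ.

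Suppose instead that they lie on the same shell. This is the real content of the Rudnick--Wigman result: the first-order small-$\sigma$ correction (the $F$-type statistic, as in Proposition~7.2 of their paper) separates distinct representations uniformly in the shell. I will cite their theorem for this case rather than reprove it.

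Then I will set $\sigma_{\rm sep}=\min\{\sigma_{\rm RW},\pi^2/(27r)\}$. The consequence in the statement follows from Lemma~\ref{lem:robin-modal-class-complexity}: since $E_{\Lambda,\sigma}=\sum_{\mathfrak c\in\mathcal I_\sigma(\Lambda)}\mathcal R_{\mathfrak c}(\sigma)$ is spanned by modal classes, injectivity gives $\#\mathcal I_\sigma(\Lambda)=1$.

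The main obstacle is the bookkeeping step: verifying that Rudnick--Wigman's ``desymmetrized spectrum'' is exactly the family $\{\Lambda_{m,n}(\sigma):0\le m\le n\}$. One point to check is whether they treat the symmetric and antisymmetric families separately, in which case injectivity across parities would need the observation that both parities share the same eigenvalue $\Lambda_{m,n}(\sigma)$. Another is that the endpoint branches $m=0$ and $m=n$ are included. If their statement turns out to be only asymptotic in the spectral level, I would fall back on the following argument. For each fixed $\sigma$ there are finitely many low shells, and the eigenvalues there depend analytically on $\sigma$; within a shell they have distinct first derivatives at $\sigma=0$ (distinct values of the Neumann boundary mass). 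Hence they separate for all small $\sigma>0$, and taking the minimum threshold over these finitely many shells gives $\sigma_{\rm sep}$.
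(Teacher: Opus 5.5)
Your primary route is the paper's own: the paper gives no argument for this proposition beyond invoking \cite[Theorem~1.5]{RudnickWigman2022}. Your extra cross-shell step is correct. Distinct Neumann shells are at least $\frac{4\pi^2}{27r^2}$ apart, while Proposition~\ref{prop:robin_neumann_gap} moves each level by less than $4\sigma/r$. So for $\sigma\le\pi^2/(27r)$ only same-shell coincidences remain, which correctly isolates what the citation is needed for.

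The fallback you sketch would not work, for two reasons.

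\emph{The first derivatives at $\sigma=0$ are not distinct within a shell.} For every class with $m\ge1$, the secular system gives that $mM$, $nN$ and $-(m+n)L$ all equal $h/\pi+O(h^2)$. Hence
$\mu^2+\mu\nu+\nu^2=\mathcal Q(m,n)+\frac3\pi\bigl(mM+nN-(m+n)L\bigr)+O(h^2)=\mathcal Q(m,n)+\frac{9h}{\pi^2}+O(h^2)$,
that is, $\Lambda_{m,n}(\sigma)=\Lambda_{m,n}(0)+\frac{4\sigma}{r}+O(\sigma^2)$, with a slope independent of $(m,n)$. Equivalently, all these Neumann modes have the same normalized boundary mass $4/r$, consistent with the constant $4\sigma/r$ in Lemma~\ref{lem:fixed_sigma_angular_expansion}. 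For example, $(1,9)$ and $(5,6)$ both lie on the shell $\mathcal Q=91$ and both have slope $4/r$. The splitting first appears at order $\sigma^2$, through the term $-\frac{4\sigma^2}{\pi^2}F(m,n)$ contained in $A_\sigma F$. For the same reason, calling the $F$-statistic the ``first-order'' correction is inaccurate.

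\emph{The finite-shell argument is not uniform in $\sigma$.} Suppose the cited result held only above a spectral threshold depending on $\sigma$. Then the set of ``low shells'' would grow as $\sigma\downarrow0$, and a minimum over it would not give a $\sigma$-independent $\sigma_{\mathrm{sep}}$. The gaps $A_\sigma\Delta F$ shrink both as $\sigma\downarrow0$ and along the spectrum, so you need ordering above a threshold that is uniform in $\sigma\in(0,\sigma_1)$. That uniformity is the substantive content of Rudnick--Wigman's theorem. It cannot be extracted from the fixed-$\sigma$ Proposition~\ref{prop:eventual_within_shell_simplicity}, whose threshold $M_\sigma^*$ is not controlled as $\sigma\to0$. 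Given such a uniform threshold, the finitely many remaining shells could be handled. That step would have to use the second-order coefficient and the strict monotonicity of $f$ along the shell, not first derivatives.
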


\begin{lemma}[Complete eigenspaces for small Robin parameter]
\label{lem:small-robin-complete-eigenspace}
Let $0<\sigma<\sigma_{\mathrm{sep}}$ and
$\Lambda\in\operatorname{Spec}(-\Delta_\sigma)$.  Then, there is a
unique desymmetrized index class, represented by an admissible pair
$0\leq m\leq n$, such that
\[
    \Lambda=\Lambda_{m,n}(\sigma).
\]
The corresponding complete eigenspace is
\begin{equation}
    E_{\Lambda,\sigma}
    =
    \mathcal R_{m,n}(\sigma)
    =
    \begin{cases}
    \operatorname{span}\{T_{m,m,\sigma}^{s}\},
        & m=n,\\[1mm]
    \operatorname{span}
    \{T_{m,n,\sigma}^{s},T_{m,n,\sigma}^{a}\},
        & m<n.
    \end{cases}
\label{eq:complete-small-robin-eigenspace}
\end{equation}
\end{lemma}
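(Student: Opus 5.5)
The proof has three steps: uniqueness of the modal class, completeness of the modal system, and the dimension count.

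\emph{Uniqueness of the class.} Fix $0<\sigma<\sigma_{\mathrm{sep}}$ and $\Lambda\in\operatorname{Spec}(-\Delta_\sigma)$. McCartin's parametrization (Section~\ref{subsec:mccartin_parametrization}) exhausts the Robin spectrum. Hence every eigenvalue is of the form $\Lambda_{m,n}(\sigma)$ for some admissible pair $0\le m\le n$, so the set $\mathcal I_\sigma(\Lambda)$ in \eqref{eq:robin_index_classes} is nonempty. By Proposition~\ref{prop:small_robin_modal_separation}, distinct desymmetrized classes have distinct eigenvalues, so $\kappa_\sigma(\Lambda)=1$. Since we restrict to the ordered representatives $0\le m\le n$, the admissible pair $(m,n)$ with $\Lambda=\Lambda_{m,n}(\sigma)$ is unique.

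\emph{Identifying the eigenspace.} The complete modal decomposition \eqref{eq:fixed-robin-complete-modal-decomposition} from Lemma~\ref{lem:robin-modal-class-complexity} gives $E_{\Lambda,\sigma}=\sum_{\mathfrak c\in\mathcal I_\sigma(\Lambda)}\mathcal R_{\mathfrak c}(\sigma)$. With a single class this sum is just $\mathcal R_{m,n}(\sigma)$. By definition, $\mathcal R_{m,n}(\sigma)$ is spanned by the symmetric and antisymmetric McCartin functions $T^{s}_{m,n,\sigma}$ and $T^{a}_{m,n,\sigma}$ of \eqref{eq:mccartin-modal-formula}; other symmetry copies in the class only reproduce these up to reflection. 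This gives the stated spanning sets once degenerate members are discarded.

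\emph{Case distinction.} By Remark~\ref{rem:modal-parity-degeneracy}, if $m=n$ then $M=N$, so $\mu=\nu$, and $T^{a}_{m,m,\sigma}\equiv0$. This leaves $\operatorname{span}\{T^s_{m,m,\sigma}\}$. It remains to check that $T^s_{m,m,\sigma}\neq0$, and that for $m<n$ both $T^s_{m,n,\sigma}$ and $T^a_{m,n,\sigma}$ are nonzero and independent. Independence is automatic because the two functions have opposite parity with respect to the altitude $x=\sqrt3 r$. Nonvanishing is the main point.

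\emph{Main obstacle: nonvanishing for $m<n$.} The remark only verifies nonvanishing at $\sigma=0$, but here $\sigma$ need not tend to zero, and the argument must be uniform in $(m,n)$. I would argue directly from the exponential expansion \eqref{eq:robin_finite_exponential}. For $m<n$ one has $\mu<\nu$ (the phases are small and ordered), so the three transverse frequencies $\mu-\nu$, $\nu-\ell$, $\ell-\mu$ are pairwise distinct in absolute value, apart from the boundary case $m=0$. Plane waves with distinct frequencies are linearly independent, and the separated terms carry fixed nonzero coefficients, so no cancellation can occur. For $m=0$ I would argue as in the remark: $\nu-\ell=\mu+2\nu$ strictly dominates the other two frequencies, so that term cannot be canceled. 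The symmetric case $m=n$ is handled the same way, since the cosine factors never vanish identically.
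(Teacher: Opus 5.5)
Your proof is correct and follows the paper's route: uniqueness of the class comes from Proposition~\ref{prop:small_robin_modal_separation}, and the identification of $E_{\Lambda,\sigma}$ with the single modal space comes from completeness of McCartin's system. The paper simply takes nonvanishing of $T^{s}_{m,n,\sigma},T^{a}_{m,n,\sigma}$ for $m<n$ as part of that system; nonvanishing is not needed for the span identity itself, only for the dimension count used later.

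Your extra nonvanishing check is sound, with two adjustments. First, justify $\mu<\nu$ not by smallness of the phases, which would require shrinking $\sigma_{\mathrm{sep}}$. Use the exact relations $\pi\mu\tan M=\pi\nu\tan N=3r\sigma>0$ and $\nu-\mu=n-m+3(N-M)/\pi$; these give $\mu,\nu>0$ and $\mu<\nu$ for every $\sigma>0$. Second, for $m=n$ the second and third terms of the symmetric mode have equal $x$-frequency magnitude $3\mu$, so distinct frequencies do not separate them; instead they coincide and add rather than cancel.
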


\begin{proof}
McCartin's construction gives a complete orthogonal system consisting
of the symmetric modes $T_{m,n,\sigma}^{s}$ for $0\leq m\leq n$ and
the antisymmetric modes $T_{m,n,\sigma}^{a}$ for $0\leq m<n$; for
$m=n$ the antisymmetric mode vanishes.  By
Proposition~\ref{prop:small_robin_modal_separation}, a Robin eigenvalue
with $0<\sigma<\sigma_{\mathrm{sep}}$ belongs to a unique
desymmetrized index class.  Completeness of McCartin's system then
gives \eqref{eq:complete-small-robin-eigenspace}.
\end{proof}

\begin{remark}
\label{rem:neumann_endpoint}
Proposition~\ref{prop:small_robin_modal_separation} applies only for
$\sigma>0$.  At $\sigma=0$, the Neumann spectrum may have unbounded
arithmetic multiplicity.  Uniform observation on
$[0,\sigma_0]$ therefore uses the Neumann observation theorem at the
endpoint rather than a uniform multiplicity bound.
\end{remark}

\subsection{Frequency perturbations}
\label{subsec:frequency_perturbation}

For each Robin modal class, let
\[
    \Xi_{m,n}(\sigma)
    =
    \{
        \xi_q(m,n,\sigma)
    \}_{q=1}^{N_0}
\]
denote the finite collection of frequencies in
(\ref{eq:robin_finite_exponential}), and let $
    \Xi_{m,n}(0)$ 
be the corresponding Neumann frequency orbit.

The secular equations imply that the Robin phases remain bounded in
their fixed intervals, as shown in (\ref{eq:LMN_ranges}). Consequently,
\begin{equation}
    \mu-m=O(1),
    \qquad
    \nu-n=O(1),
\label{eq:rough_parameter_bound}
\end{equation}
uniformly in the spectral indices for $\sigma$ in a fixed compact
interval. For the stability theorem, however, we require a stronger
estimate in the small-$\sigma$ regime.

\begin{lemma}[Uniform modal-frequency perturbation]
\label{lem:uniform_frequency_perturbation}
There exist $\sigma_*>0$ and $C>0$ such that, for every admissible
index pair $(m,n)$ and every
\[
    0\leq\sigma\leq\sigma_*,
\]
the Robin frequencies can be labeled so that
\begin{equation}
    \max_{1\leq q\leq N_0}
    \left|
        \xi_q(m,n,\sigma)
        -
        \xi_q(m,n,0)
    \right|
    \leq
    C\sqrt{\sigma}.
\label{eq:uniform_frequency_shift}
\end{equation}
The constant $C$ is independent of $(m,n)$. If $m\geq1$, the stronger
estimate
\begin{equation}
    \max_{1\leq q\leq N_0}
    \left|
        \xi_q(m,n,\sigma)
        -
        \xi_q(m,n,0)
    \right|
    \leq
    C\sigma
\label{eq:bulk_frequency_shift}
\end{equation}
holds uniformly in $m,n$.
\end{lemma}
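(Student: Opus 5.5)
The frequencies $\xi_q(m,n,\sigma)$ in \eqref{eq:robin_finite_exponential} are explicit linear functions of $(\ell,\mu,\nu)$, with coefficients of order $r^{-1}$ that depend only on $r$. The phases $\delta_j$ enter only the amplitudes. It therefore suffices to prove
\[
    |\mu-m|+|\nu-n|\leq C\sqrt\sigma
    \quad\text{for all admissible }(m,n),
    \qquad
    |\mu-m|+|\nu-n|\leq C\sigma
    \quad\text{when }m\geq1,
\]
uniformly in the indices; the bound for $\ell+m+n=-(\mu-m)-(\nu-n)$ then follows automatically. Labeling the frequencies $q$ by the same term and sign choice at $\sigma$ and at $0$ then gives \eqref{eq:uniform_frequency_shift} and \eqref{eq:bulk_frequency_shift}. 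By \eqref{eq:mu_nu}, both differences are bounded by $4\pi^{-1}(|L|+|M|+|N|)$. So everything reduces to bounding the phases through the exact system \eqref{eq:exact_three_index_system}--\eqref{eq:exact_arctan_phase}, i.e. $M_j=\arctan(h/(\pi\mu_j))$ with $h=3r\sigma$.

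\emph{Case $m\geq1$.} I would first show that $\mu_j\geq c|m_j|$ for a fixed $c>0$, once $\sigma\leq\sigma_*$. The phases lie in boxes of width $\pi/2$, so $|\mu_j-m_j|\leq 2$ a priori. This is not enough when $m_j=1$. I would bootstrap: each phase satisfies $|M_j|\leq h/(\pi|\mu_j|)$, and a continuity argument in $\sigma$ starting from the branch at $\sigma=0$ (where $\mu_j=m_j$) shows that $|\mu_j-m_j|\leq Ch\leq 1/2$ persists on $[0,\sigma_*]$. Indeed, as long as $|\mu_j|\geq|m_j|-1/2\geq1/2$, we get $|M_j|\leq 2h/\pi$, hence $|\mu_j-m_j|\leq 8h/\pi^2<1/2$ for $\sigma$ small. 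This closes the bootstrap and gives the linear bound $O(\sigma)$, uniform in $(m,n)$ since $|m_j|\geq1$ for all three indices.

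\emph{Case $m=0$.} Here $\mu_1=\mu=2M-N-L\over\pi$ and $\mu\tan M=h/\pi$, so $M$ solves $\frac{2M-N-L}{\pi}\tan M=\frac h\pi$. By the previous bootstrap, $N$ and $L$ are $O(\sigma)$, uniformly in $n\geq1$, because $n$ and $n$ plus phases stay away from zero. Since $M\geq0$ and $-L\geq0$, we get $(2M+O(\sigma))\tan M=h$. Using $\tan M\geq M$, this forces $M^2\lesssim\sigma$, i.e. $M=O(\sqrt\sigma)$. This is the sharp square-root regime; it is the analogue of the lowest mode, and there $\mu\asymp\sqrt\sigma$ genuinely. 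The case $m=n=0$ is handled identically, with all three phases of size $O(\sqrt\sigma)$. Feeding $M=O(\sqrt\sigma)$ back into \eqref{eq:mu_nu} gives $|\mu-0|+|\nu-n|=O(\sqrt\sigma)$.

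\emph{Main obstacle.} The delicate point is the uniformity of the bootstrap together with the choice of branch. I must ensure that the continuous McCartin branch selected by \eqref{eq:LMN_ranges} and the initial condition $(0,0,0)$ is the one controlled by the a priori inequalities, and that the threshold $\sigma_*$ does not depend on $(m,n)$. I would handle this by the uniform contraction estimate for the fixed-point map \eqref{eq:fixed_point_mu}--\eqref{eq:fixed_point_nu}. Its derivative is bounded by $Ch/\min_j|\mu_j|^2\leq Ch$ in the $m\geq1$ regime, and in the $m=0$ regime one uses the explicit scalar monotonicity in $M$ instead. Uniqueness of the phase triple then identifies the solution found with McCartin's branch.
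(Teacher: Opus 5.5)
Your reduction is the same as the paper's. The wave vectors are fixed linear functions of $(\ell,\mu,\nu)$, the $\delta_j$ enter only the amplitudes, and everything comes down to bounding $|L|+|M|+|N|$ by $C\sqrt\sigma$ for all pairs and by $C\sigma$ when $m\ge1$.

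The difference is in how you bound the phases. The paper (Lemmas~\ref{lem:uniform-secular-parameters} and~\ref{lem:bulk-secular-linear}) does not bootstrap. It uses the sign constraints in \eqref{eq:LMN_ranges}, namely $a=-L$, $b=M$, $c=N\in[0,\pi/2)$, to bound the linear coefficients in the secular equations from below a priori:
\begin{itemize}
\item $2a+b+c+(m+n)\pi\ge 2a$ gives $2a^2\le h$.
\item For $d=\max\{b,c\}$, the coefficient of $\tan d$ in its own secular equation is at least $d$, which gives $d^2\le h$.
\item For $m\ge1$, one has $2b-c+a+m\pi>\pi/2$, $2c+a-b+n\pi>\pi/2$ and $2a+b+c+(m+n)\pi\ge 2\pi$, which give the linear bounds.
\end{itemize}
This works for every $\sigma\ge0$. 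It needs neither continuity of the branch in $\sigma$ nor a contraction, and it handles $m=0$ and $(m,n)=(0,0)$ without any case split.

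Your continuity/contraction argument is valid for small $\sigma$. For $m\ge1$, on $\{|\mu-m|\le\tfrac12,\ |\nu-n|\le\tfrac12\}$ the fixed-point map is $O(h)$-Lipschitz and maps the set into itself once $8h/\pi^2\le\tfrac12$. Uniqueness of the phase triple then identifies its fixed point with McCartin's branch. This route has the merit of not relying on sign information. Here, though, the signs give $\mu>m-\tfrac12$, $\nu>n-\tfrac12$ and $|\mu_3|\ge m+n$ immediately, so the bootstrap is unnecessary.

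Two steps in your $m=0$ paragraph are thinner than stated.
\begin{itemize}
\item The ``previous bootstrap'' assumed $|m_j|\ge1$ for all three indices. To deduce $N,L=O(\sigma)$ when $m=0$, you first need to know that $M$ cannot push $\mu_2,\mu_3$ toward zero. The signs give $\nu>n-\tfrac12$ and $\mu_3\le -n$ directly.
\item The case $(0,0)$ is not ``identical'', since no index stays away from zero there. Use the $\max\{M,N\}$ argument above, or the symmetry $M=N$ as in Remark~\ref{rem:sqrt-robin-scale}.
\end{itemize}
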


\begin{remark}
\label{rem:key_robin_lemma}
Lemma~\ref{lem:uniform_frequency_perturbation} is substantially
stronger than convergence of each fixed Robin mode to its Neumann
counterpart. Its content is precisely the uniformity with respect to
the spectral indices. This is one of the key estimates needed for the
small-Robin theorem.
\end{remark}

\subsection{A uniform modal stability lemma}
\label{subsec:modal_stability}

Frequency control alone does not immediately give an observation
estimate, since the coefficients and phases in
\cref{eq:robin_finite_exponential} also depend on $\sigma$. We
therefore isolate the functional statement required in the proof.

Let
$
    \mathcal R_{m,n}(\sigma)
$
denote the Robin modal space generated by the symmetry-related
functions associated with the index class $(m,n)$, and let
$\mathcal N_{m,n}
    =
    \mathcal R_{m,n}(0)
$
denote its Neumann limit.
For two closed subspaces $E,F\subset L^2(T)$, define
\begin{equation}
    d(E,F)
    =
    \max
    \left\{
        \sup_{\substack{u\in E\\ \|u\|_2=1}}
        \operatorname{dist}(u,F),
        \;
        \sup_{\substack{v\in F\\ \|v\|_2=1}}
        \operatorname{dist}(v,E)
    \right\}.
\label{eq:subspace_gap}
\end{equation}

\begin{lemma}[Uniform modal-space stability]
\label{lem:uniform_modal_stability}
There exist $\sigma_0>0$ and a function
\[
    \varepsilon:[0,\sigma_0]\to[0,\infty),
    \qquad
    \varepsilon(\sigma)\longrightarrow0
    \quad\text{as }\sigma\to0,
\]
such that
\begin{equation}
    \sup_{m,n}
    d\bigl(
        \mathcal R_{m,n}(\sigma),
        \mathcal N_{m,n}
    \bigr)
    \leq
    \varepsilon(\sigma)
\label{eq:uniform_modal_gap}
\end{equation}
for every $0\leq\sigma\leq\sigma_0$.
\end{lemma}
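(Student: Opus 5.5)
The proof reduces the subspace gap to a uniform perturbation estimate for each spanning mode, measured against a uniform lower bound for the Gram matrix of the Neumann modal basis. By \eqref{eq:robin_finite_exponential}, each Robin modal representative $T^{\varepsilon}_{m,n,\sigma}$ is a sum of at most $N_0$ plane waves $a_q^{\varepsilon}(m,n,\sigma)e^{i\xi_q(m,n,\sigma)\cdot x}$. The amplitudes $a_q^\varepsilon$ are fixed unimodular combinations of the phases $e^{\pm i\delta_j}$.

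The first step is to control the phases. Combining Lemma~\ref{lem:uniform_frequency_perturbation} with the secular equations \eqref{eq:secular_L}--\eqref{eq:secular_N} should give $|\delta_j(m,n,\sigma)-\delta_j(m,n,0)|\le C\sqrt\sigma$ uniformly in $(m,n)$. The phases $\delta_j$ are affine combinations of $L,M,N$. From $\mu_j\tan M_j=h/\pi$ one gets $M_j=O(\sigma)$ when $\mu_j\gtrsim1$, and $M_j=O(\sqrt\sigma)$ in the worst case $m=0$, where $\mu\approx 0$ and $\mu\tan M\approx M^2/\pi$.

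Next I compare plane waves on the bounded set $T$. The elementary bound $|e^{i\xi\cdot x}-e^{i\xi'\cdot x}|\le|\xi-\xi'|\,\operatorname{diam}(T+\text{origin})$ is independent of $|\xi|$; this is the crucial point for uniformity in the modal indices. Together with the phase bound, it yields
\[
\|T^{\varepsilon}_{m,n,\sigma}-T^{\varepsilon}_{m,n,0}\|_{L^2(T)}\le C'\sqrt\sigma
\]
uniformly in $(m,n)$ and $\varepsilon$.

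The second step converts this into the normalized gap $d(\cdot,\cdot)$. For this I need a uniform lower bound on the Neumann modal norms and on the Gram matrix of $\{T^s_{m,n,0},T^a_{m,n,0}\}$. The Neumann modes are explicit trigonometric sums, so their $L^2(T)$ norms can be computed by orthogonality of the unfolded torus characters (Lemma~\ref{lem:reflection_extension}).
\begin{itemize}
\item For $m<n$ the symmetric and antisymmetric modes are orthogonal, by parity about the altitude.
\item Their norms are bounded below by a constant $c_0>0$ independent of $(m,n)$, since the $N_0$ exponentials have distinct frequencies on the torus (compare Remark~\ref{rem:modal-parity-degeneracy}, including the special cases $m=0$ and $m=n$).
\end{itemize}
Then for $u=\alpha T^s_{m,n,\sigma}+\beta T^a_{m,n,\sigma}$ of norm one, the coefficients satisfy $|\alpha|+|\beta|\le C$ once $C'\sqrt\sigma\ll c_0$. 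Its distance to $\mathcal N_{m,n}$ is therefore at most $CC'\sqrt\sigma$, and symmetrically in the other direction. Hence $\varepsilon(\sigma)=C''\sqrt\sigma$ works for $\sigma\le\sigma_0$.

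The dimension issue at $m=n$ is harmless: $T^a_{m,m,\sigma}\equiv0$ for all $\sigma$, so $\mathcal R_{m,m}(\sigma)$ and $\mathcal N_{m,m}$ are both one-dimensional. The symmetric copies in the class are handled the same way, since they are finitely many isometric images.

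The main obstacle is the uniform phase control near $m=0$, where $\mu_1$ is small. Here the $O(\sigma)$ rate fails and one must extract the $\sqrt\sigma$ bound directly from $\mu\tan M=h/\pi$ with $\mu=2M/\pi+O(\sigma)$. The obstacle also includes showing that the coupling terms in the other two equations stay $O(\sigma)$ uniformly in $n$. I would first treat the edge system exactly as in the $q$-scaled analysis of Lemma~\ref{lem:fixed_sigma_angular_expansion}, now with $\sigma$ small instead of the frequency large.
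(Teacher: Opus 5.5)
Your outline is essentially the paper's proof. The paper also uses uniform $O(\sqrt\sigma)$ control of the secular parameters, and compares the McCartin modes pointwise on the bounded set $T$, so frequency shifts cost nothing in $|\xi|$. It then uses a uniform lower bound on the Neumann modal norms, quoting McCartin's explicit values where you would recompute them by Parseval on the unfolded torus. Parity orthogonality of the symmetric and antisymmetric modes finishes the argument with $\varepsilon(\sigma)=C_*\sqrt\sigma$.

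The step you flag as the main obstacle is actually elementary. It needs no perturbative or $q$-scaled analysis; the latter is set up with $\rho=m^{-2}$ and does not apply at $m=0$ anyway. Set $a=-L$, $b=M$, $c=N$, all in $[0,\pi/2)$, and use $\tan t\ge t$. The first secular equation gives $2a^2\le h$. If $b\ge c$, then $2b-c+a+m\pi\ge b$, so $b^2\le b\tan b\le h$; the case $c\ge b$ is symmetric. This bounds all three phases by $C\sqrt h$ for every $(m,n)$ at once. It also covers $(0,0)$, where your relation $\mu=2M/\pi+O(\sigma)$ fails and all three phases are of exact order $\sqrt\sigma$.
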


The proof of Lemma 
\ref{lem:uniform_modal_stability} is deferred to
Section \ref{subsec:proof_modal_stability}. It is the main technical step in
the perturbative Robin analysis.

\subsection{Transfer of observation inequalities}
\label{subsec:observation_transfer}

We next consider the following lemma.

\begin{lemma}[Stability of an observation lower bound]
\label{lem:observation_stability}
Let $V\subset T$ be measurable, and let $E,F\subset L^2(T)$ be
finite-dimensional subspaces. Suppose that
\begin{equation}
    \|v\|_{L^2(V)}^2
    \geq
    c\,\|v\|_{L^2(T)}^2
    \qquad
    \text{, for all }v\in F,
\label{eq:F_observation}
\end{equation}
for some $c>0$. If
\[
    d(E,F)\leq\delta<1,
\]
then every $u\in E$ satisfies
\begin{equation}
    \|u\|_{L^2(V)}
    \geq
    \bigl[\sqrt{c}(1-\delta)-\delta\bigr]
    \|u\|_{L^2(T)}.
\label{eq:safe_stability_bound}
\end{equation}
In particular, the lower bound is positive whenever
\begin{equation}
    \delta<\frac{\sqrt{c}}{1+\sqrt{c}}.
\label{eq:stability_positive_condition}
\end{equation}
\end{lemma}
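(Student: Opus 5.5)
Fix $u\in E$ and normalize $\|u\|_{L^2(T)}=1$. Let $P_F$ denote the orthogonal projection of $L^2(T)$ onto $F$; since $F$ is finite-dimensional, $F$ is closed and $P_F$ is well defined. Set $v:=P_Fu$ and $w:=u-v$. By the definition \eqref{eq:subspace_gap} of the gap, $\|w\|_{L^2(T)}=\operatorname{dist}(u,F)\le d(E,F)\le\delta$. The plan is to transfer the observation inequality \eqref{eq:F_observation} from $v$ to $u$ using the triangle inequality on $L^2(V)$. Only the first half of the symmetric gap, $\sup_{u\in E}\operatorname{dist}(u,F)$, should be needed.

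The first step is to bound $\|v\|_{L^2(T)}$ from below. Since $u=v+w$, we have $\|v\|_{L^2(T)}\ge\|u\|_{L^2(T)}-\|w\|_{L^2(T)}\ge1-\delta$. Pythagoras gives the sharper $\sqrt{1-\delta^2}$, but the weaker bound $1-\delta$ is exactly what \eqref{eq:safe_stability_bound} requires, so we use it. Applying \eqref{eq:F_observation} to $v\in F$ then yields $\|v\|_{L^2(V)}\ge\sqrt c\,\|v\|_{L^2(T)}\ge\sqrt c\,(1-\delta)$.

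The second step is to pass back to $u$ on the observation set. We have
\[
\|u\|_{L^2(V)}\ge\|v\|_{L^2(V)}-\|w\|_{L^2(V)}\ge\sqrt c(1-\delta)-\|w\|_{L^2(T)}\ge\sqrt c(1-\delta)-\delta,
\]
using the trivial bound $\|w\|_{L^2(V)}\le\|w\|_{L^2(T)}$. Undoing the normalization by homogeneity gives \eqref{eq:safe_stability_bound} for every $u\in E$; the case $u=0$ is trivial. For the positivity criterion, note that $\sqrt c(1-\delta)-\delta>0$ is equivalent to $\delta(1+\sqrt c)<\sqrt c$, which is \eqref{eq:stability_positive_condition}.

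There is no real obstacle in this argument. The only point to check is that the projection onto $F$ realizes $\operatorname{dist}(u,F)$, which holds because $F$ is closed. The hypothesis $\delta<1$ is used only to make the bound meaningful, and is automatically implied by \eqref{eq:stability_positive_condition}.
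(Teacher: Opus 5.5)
Your proof is correct and follows essentially the same route as the paper: choose $v\in F$ within distance $\delta$ of the normalized $u$, deduce $\|v\|_{L^2(T)}\ge 1-\delta$, apply the observation inequality on $F$, and absorb $u-v$ using $\|u-v\|_{L^2(V)}\le\|u-v\|_{L^2(T)}$. Your use of the orthogonal projection $P_F$ also makes explicit that the distance to the closed subspace $F$ is attained, a point the paper's phrase ``by the definition of the gap'' leaves implicit.
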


\begin{proof}
By homogeneity it suffices to consider $\|u\|_{L^2(T)}=1$. By the
definition of the gap, there exists $v\in F$ such that
\[
    \|u-v\|_{L^2(T)}\leq\delta.
\]
Hence, $\|v\|_{L^2(T)}\geq1-\delta$. Using the contraction
\[
    \|u-v\|_{L^2(V)}\leq\|u-v\|_{L^2(T)}
\]
and \cref{eq:F_observation}, we obtain
\[
\begin{aligned}
    \|u\|_{L^2(V)}
    &\geq \|v\|_{L^2(V)}-\|u-v\|_{L^2(V)}\\
    &\geq \sqrt{c}\,\|v\|_{L^2(T)}-\delta\\
    &\geq \sqrt{c}(1-\delta)-\delta.
\end{aligned}
\]
Rescaling proves Eqs. (\ref{eq:safe_stability_bound}) and (\ref{eq:stability_positive_condition}) follows immediately.
\end{proof}

\subsection{The perturbative argument}
\label{subsec:robin_mechanism}

Let $\mathcal N_{m,n}$ denote the Neumann modal space associated with
the index class $(m,n)$.  By Theorem~\ref{thm:neumann_unl}, for every
measurable $V\subset T$ with $|V|>0$ there exists $c_V>0$,
independent of $(m,n)$, such that
\begin{equation}
    \|v\|_{L^2(V)}^2
    \geq
    c_V\|v\|_{L^2(T)}^2,
    \qquad
    v\in\mathcal N_{m,n}.
\label{eq:neumann_modal_obs}
\end{equation}
Lemma~\ref{lem:uniform_modal_stability} compares
$\mathcal R_{m,n}(\sigma)$ with $\mathcal N_{m,n}$ uniformly in the
modal indices.  The observation estimate is therefore stable for
sufficiently small $\sigma$: there exist $\sigma_0>0$ and $c_V'>0$
such that
\begin{equation}
    \|u\|_{L^2(V)}^2
    \geq
    c_V'\|u\|_{L^2(T)}^2,
    \qquad
    u\in\mathcal R_{m,n}(\sigma),
    \quad
    0\leq\sigma\leq\sigma_0,
\label{eq:robin_modal_obs}
\end{equation}
uniformly in $(m,n)$.

\begin{proposition}[Small-Robin observation]
\label{prop:small_robin_reduction}
There exists $\sigma_0>0$ such that
\begin{equation}
    \inf_{0<\sigma\leq\sigma_0}
    \inf_{\Lambda\in\operatorname{Spec}(-\Delta_\sigma)}
    \inf_{0\neq u\in E_{\Lambda,\sigma}}
    \frac{\|u\|_{L^2(V)}^2}
         {\|u\|_{L^2(T)}^2}
    >0.
\label{eq:small_robin_positive_sigma}
\end{equation}
Together with the Neumann estimate at $\sigma=0$, this proves
Theorem~\ref{thm:small_robin}.
\end{proposition}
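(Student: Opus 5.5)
The plan is to combine three earlier ingredients: the uniform modal-space stability (Lemma~\ref{lem:uniform_modal_stability}), the transfer lemma (Lemma~\ref{lem:observation_stability}), and the identification of complete eigenspaces for small $\sigma$ (Lemma~\ref{lem:small-robin-complete-eigenspace}).

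Fix a measurable $V\subset T$ with $|V|>0$, and let $c_V>0$ be the Neumann constant of Theorem~\ref{thm:neumann_unl}. Each Neumann modal space $\mathcal N_{m,n}$ is contained in a single Neumann eigenspace $E^N_{\Lambda_{m,n}(0)}$. Hence \eqref{eq:neumann_modal_obs} holds on every $\mathcal N_{m,n}$ with the same constant $c_V$, uniformly in $(m,n)$. This holds even when the Neumann shell contains many modal classes, since we only use the inclusion into the full eigenspace.

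Next, set
\[
\delta_V:=\tfrac12\frac{\sqrt{c_V}}{1+\sqrt{c_V}}.
\]
Using $\varepsilon(\sigma)\to0$ from Lemma~\ref{lem:uniform_modal_stability}, choose $\sigma_0\le\min\{\sigma_{\mathrm{sep}}/2,\sigma_0^{\mathrm{stab}}\}$ so that $\varepsilon(\sigma)\le\delta_V$ for $0\le\sigma\le\sigma_0$. Here $\sigma_0^{\mathrm{stab}}$ is the threshold in that lemma. Lemma~\ref{lem:observation_stability}, applied with $E=\mathcal R_{m,n}(\sigma)$ and $F=\mathcal N_{m,n}$, then gives
\[
\|u\|_{L^2(V)}\ge\bigl[\sqrt{c_V}(1-\delta_V)-\delta_V\bigr]\|u\|_{L^2(T)}
\]
for all $u\in\mathcal R_{m,n}(\sigma)$. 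The bracket is a positive number $\sqrt{c_V'}$ independent of $(m,n)$ and $\sigma$; this is exactly \eqref{eq:robin_modal_obs}.

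Then fix $0<\sigma\le\sigma_0<\sigma_{\mathrm{sep}}$ and a Robin eigenvalue $\Lambda$. By Lemma~\ref{lem:small-robin-complete-eigenspace}, $E_{\Lambda,\sigma}=\mathcal R_{m,n}(\sigma)$ for a unique desymmetrized class $(m,n)$. So every $u\in E_{\Lambda,\sigma}$ satisfies the bound with constant $c_V'$, and the infimum in \eqref{eq:small_robin_positive_sigma} is at least $c_V'>0$. Adding the endpoint $\sigma=0$ via Theorem~\ref{thm:neumann_unl} and taking $c_{V,R}=\min\{c_V,c_V'\}$ yields Theorem~\ref{thm:small_robin}.

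The dimension of $\mathcal R_{m,n}(\sigma)$ needs no separate check, because Lemma~\ref{lem:observation_stability} only requires the one-sided gap $\sup_{u\in E}\operatorname{dist}(u,F)\le\delta$, which $d$ controls. The real obstacle lies entirely in Lemma~\ref{lem:uniform_modal_stability}, which is assumed here. Given that lemma, the only delicate point is that eigenspaces for $\sigma>0$ are single modal classes. For this the restriction $\sigma<\sigma_{\mathrm{sep}}$ is essential: otherwise a Robin eigenspace could be a sum of modal spaces that are individually close to Neumann spaces lying in different Neumann eigenspaces, and the transfer would have to be done on the sum.
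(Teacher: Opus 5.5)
Your proposal is correct and follows essentially the paper's argument. It uses the same three steps: the Neumann bound on each $\mathcal N_{m,n}$ via its inclusion in a Neumann eigenspace; the uniform gap of Lemma~\ref{lem:uniform_modal_stability} fed into Lemma~\ref{lem:observation_stability} to obtain \eqref{eq:robin_modal_obs}; and Lemma~\ref{lem:small-robin-complete-eigenspace}, with $\sigma_0<\sigma_{\mathrm{sep}}$, to identify each complete eigenspace with a single modal space. Your explicit choice of $\delta_V$, which yields the constant $c_V/4$, is the same as the paper's choice \eqref{eq:sigma0-choice} in its completion of Theorem~\ref{thm:small_robin}.
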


\begin{proof}
Choose $\sigma_0>0$ smaller than the separation threshold in
Proposition~\ref{prop:small_robin_modal_separation} and sufficiently
small for \eqref{eq:robin_modal_obs} to hold.  For
$0<\sigma\leq\sigma_0$, Lemma~\ref{lem:small-robin-complete-eigenspace}
identifies every complete Robin eigenspace with a single
desymmetrized modal space:
\[
    E_{\Lambda,\sigma}
    =
    \mathcal R_{m,n}(\sigma)
\]
for the unique index class associated with $\Lambda$.  Applying
\eqref{eq:robin_modal_obs} gives
\eqref{eq:small_robin_positive_sigma}.  The case $\sigma=0$ follows
from Theorem~\ref{thm:neumann_unl}.
\end{proof}

\subsection{Uniform estimates from the secular system}
\label{subsec:proof_modal_stability}

It remains to prove the uniform modal-space convergence required in
Lemma~\ref{lem:uniform_modal_stability}:
\begin{equation}
    \sup_{m,n}
    d\bigl(
        \mathcal R_{m,n}(\sigma),
        \mathcal N_{m,n}
    \bigr)
    \longrightarrow0
    \qquad
    \text{as }\sigma\downarrow0.
\label{eq:central_uniform_limit}
\end{equation}
In particular, convergence for each fixed pair $(m,n)$ is not
sufficient.

The secular equations
\eqref{eq:secular_L}--\eqref{eq:secular_N} give estimates for
$L,M,N$ that are uniform in the modal indices.  These estimates control
the displacements $\mu-m$ and $\nu-n$ and, through McCartin's
representation \eqref{eq:mccartin-modal-formula}, the corresponding
Robin modal functions.  We begin with the secular parameters.

\subsection{Uniform estimates for the secular parameters}
\label{subsec:uniform-secular-estimates}

Set
\begin{equation}
    h:=3r\sigma,
\label{eq:h-robin}
\end{equation}
and
\begin{equation}
    a:=-L,\qquad b:=M,\qquad c:=N.
\label{eq:abc-secular}
\end{equation}
Then, $0\leq a,b,c<\pi/2$, and
\eqref{eq:secular_L}--\eqref{eq:secular_N} become
\begin{align}
    \bigl(2a+b+c+(m+n)\pi\bigr)\tan a &= h,
\label{eq:secular-a}\\
    \bigl(2b-c+a+m\pi\bigr)\tan b &= h,
\label{eq:secular-b}\\
    \bigl(2c+a-b+n\pi\bigr)\tan c &= h.
\label{eq:secular-c}
\end{align}

\begin{lemma}[Uniform secular-parameter bounds]
\label{lem:uniform-secular-parameters}
For every admissible pair $0\leq m\leq n$ and every $\sigma\geq0$,
\begin{equation}
    0\leq a\leq\sqrt{\frac h2},
    \qquad
    0\leq b,c\leq\sqrt h.
\label{eq:abc-sqrt-bound}
\end{equation}
Consequently,
\begin{equation}
    |L|+|M|+|N|
    \leq
    \left(2+\frac1{\sqrt2}\right)\sqrt h,
\label{eq:LMN-uniform-bound}
\end{equation}
and
\begin{align}
    |\mu-m| &\leq \frac4\pi\sqrt h,
\label{eq:mu-uniform-shift}\\
    |\nu-n| &\leq \frac4\pi\sqrt h,
\label{eq:nu-uniform-shift}\\
    |\ell+(m+n)| &\leq \frac8\pi\sqrt h.
\label{eq:ell-uniform-shift}
\end{align}
The bounds are uniform in the modal indices.
\end{lemma}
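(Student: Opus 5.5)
The plan is to read the bounds directly off the three scalar equations \eqref{eq:secular-a}--\eqref{eq:secular-c}. I use only $0\le a,b,c<\pi/2$, the elementary inequality $\tan t\ge t$ on $[0,\pi/2)$, and $0\le m\le n$. The case $h=0$ is trivial: uniqueness of the McCartin branch with $(L,M,N)(0)=(0,0,0)$ gives $a=b=c=0$. So assume $h>0$. Then $\tan a,\tan b,\tan c>0$ (none of the phases can vanish, since each product equals $h>0$). Hence all three coefficients on the left-hand sides are strictly positive.

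\emph{Bound for $a$.} In \eqref{eq:secular-a} the coefficient $2a+b+c+(m+n)\pi$ is at least $2a$. Together with $\tan a\ge a$ this gives $2a^2\le h$, so $a\le\sqrt{h/2}$.

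\emph{Ordering $c\le b$.} This is the only nontrivial step, because the coefficient $2b-c+a+m\pi$ in \eqref{eq:secular-b} contains $-c$ and is not obviously $\ge b$ when $m=0$. I argue by contradiction. Suppose $c>b$. Then $2c-b>2b-c$ and $n\pi\ge m\pi$, so
\[
2c+a-b+n\pi>2b-c+a+m\pi>0 .
\]
Also $\tan c>\tan b>0$, so the left-hand side of \eqref{eq:secular-c} strictly exceeds that of \eqref{eq:secular-b}. This contradicts the fact that both equal $h$. Hence $c\le b$.

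\emph{Bounds for $b$ and $c$.} Using $c\le b$, the coefficient in \eqref{eq:secular-b} satisfies $2b-c+a+m\pi\ge b$. Then $b^2\le b\tan b\le h$, so $b\le\sqrt h$, and therefore $c\le b\le\sqrt h$. This proves \eqref{eq:abc-sqrt-bound}. Summing, $|L|+|M|+|N|=a+b+c\le(2+1/\sqrt2)\sqrt h$, which is \eqref{eq:LMN-uniform-bound}.

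\emph{Shift bounds.} From \eqref{eq:mu_nu} in the variables $a,b,c$ we have $\pi(\mu-m)=2b-c+a$ and $\pi(\nu-n)=2c+a-b$. Since $c\le b$, the quantity $2b-c+a$ lies in $[0,\,2b+a]$, and $2b+a\le(2+1/\sqrt2)\sqrt h\le4\sqrt h$. The quantity $2c+a-b$ lies between $-b$ and $2c+a$, so its absolute value is at most $\max\{\sqrt h,(2+1/\sqrt2)\sqrt h\}\le4\sqrt h$. This gives \eqref{eq:mu-uniform-shift} and \eqref{eq:nu-uniform-shift}. Finally, $\ell+(m+n)=-(\mu-m)-(\nu-n)$ by \eqref{eq:ell_parameter}, so the triangle inequality yields \eqref{eq:ell-uniform-shift}. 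No constant depends on $(m,n)$, so all bounds are uniform in the modal indices.
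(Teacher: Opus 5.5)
Your proof is correct, and apart from one step it follows the paper's. Both arguments bound $a$ via $h\ge 2a\tan a\ge 2a^2$, and then read the remaining bounds off \eqref{eq:secular-b}--\eqref{eq:secular-c} using $\tan t\ge t$.

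The difference is in how the negative term $-c$ (resp.\ $-b$) in the coefficients is handled. The paper never decides which of $b,c$ is larger. It sets $d=\max\{b,c\}$ and uses the equation belonging to the maximum, whose coefficient is then trivially at least $d$. This gives $d\le\sqrt h$, and hence both bounds at once. You instead prove the ordering $c\le b$ by comparing \eqref{eq:secular-b} and \eqref{eq:secular-c} factor by factor.

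\textbf{What the paper's route buys.} It is shorter and uses only non-strict inequalities. So $h=0$ needs no separate treatment and no appeal to the normalization at $\sigma=0$. It also uses only $m,n\ge 0$, not $m\le n$. It then bounds the shifts crudely by $|2b-c+a|\le 2b+c+a\le(3+1/\sqrt2)\sqrt h\le 4\sqrt h$.

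\textbf{What your route buys.} It costs the strict-positivity discussion for $h>0$ and the hypothesis $m\le n$. In return it yields the extra structural fact $N\le M$. From this you get the sign information $\mu-m\ge 0$ and slightly sharper shift constants, e.g.\ $0\le\pi(\mu-m)\le(2+1/\sqrt2)\sqrt h$.
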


\begin{proof}
Since $\tan t\geq t$ on $[0,\pi/2)$,
\eqref{eq:secular-a} gives
\[
    h\geq2a\tan a\geq2a^2,
\]
and hence, $a\leq\sqrt{h/2}$.
Set $d=\max\{b,c\}$.  If $d=b$, then $c\leq b$ and
\[
    2b-c+a+m\pi
    =
    b+(b-c)+a+m\pi
    \geq b.
\]
It follows from \eqref{eq:secular-b} that
\[
    h
    \geq b\tan b
    \geq b^2.
\]
If $d=c$, then $b\leq c$ and similarly
\[
    2c+a-b+n\pi
    =
    c+(c-b)+a+n\pi
    \geq c,
\]
so that
\[
    h
    \geq c\tan c
    \geq c^2
\]
by \eqref{eq:secular-c}.  Thus, $b,c\leq\sqrt h$, proving
\eqref{eq:abc-sqrt-bound}, and
\eqref{eq:LMN-uniform-bound} follows.\\
By \eqref{eq:mu_nu},
\[
    \mu-m=\frac{2b-c+a}{\pi},
    \qquad
    \nu-n=\frac{2c+a-b}{\pi}.
\]
Therefore,
\[
    |\mu-m|
    \leq
    \frac{2b+c+a}{\pi}
    \leq
    \frac4\pi\sqrt h,
\]
and the same estimate holds for $|\nu-n|$.  Since
$\ell=-\mu-\nu$,
\[
    |\ell+(m+n)|
    \leq
    |\mu-m|+|\nu-n|
    \leq
    \frac8\pi\sqrt h.
\]
\end{proof}

The square-root bound is uniform over all modal indices, including the
edge family $m=0$.  Away from this family the secular parameters satisfy
the stronger linear estimate below.

\begin{lemma}[Linear secular bounds away from the edge]
\label{lem:bulk-secular-linear}
If $m\geq1$, then
\begin{equation}
    |L|+M+N\leq C_r\sigma
\label{eq:bulk-LMN-linear}
\end{equation}
uniformly in $m,n$.  Consequently,
\begin{equation}
    |\mu-m|+|\nu-n|+|\ell+(m+n)|
    \leq C_r'\sigma.
\label{eq:bulk-parameter-linear}
\end{equation}
\end{lemma}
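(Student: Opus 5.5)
The plan is to work directly with the secular system \eqref{eq:secular-a}--\eqref{eq:secular-c} in the variables $a=-L$, $b=M$, $c=N$. Lemma~\ref{lem:uniform-secular-parameters} already gives $0\le a,b,c\le\sqrt h$. The improvement from $\sqrt h$ to $h$ should come from the fact that, once $m\ge1$, every coefficient multiplying a tangent is bounded below by a fixed positive constant rather than by the phase itself. The constants will depend only on $r$ through $h=3r\sigma$. I will first restrict to $\sigma$ in a bounded range, say $h\le h_0$ with $h_0$ small, and then treat large $\sigma$ separately.

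\emph{Small $h$.} Since $a,b,c\le\sqrt{h_0}$, I can choose $h_0$ so that $a+b+c\le\pi/2$. Then, for $m\ge1$ (and hence $n\ge1$), the coefficients satisfy
\[
    2b-c+a+m\pi\ \ge\ \pi-c\ \ge\ \tfrac{\pi}{2},
    \qquad
    2c+a-b+n\pi\ \ge\ \pi-b\ \ge\ \tfrac{\pi}{2},
    \qquad
    2a+b+c+(m+n)\pi\ \ge\ 2\pi.
\]
Combining these with $\tan t\ge t$ in \eqref{eq:secular-b}, \eqref{eq:secular-c}, and \eqref{eq:secular-a} gives $b\le 2h/\pi$, $c\le 2h/\pi$, and $a\le h/(2\pi)$. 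Therefore $|L|+M+N\le (9/(2\pi))\,h=(27r/(2\pi))\,\sigma$, uniformly in $m\le n$.

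\emph{Large $h$.} For $h>h_0$ no linear bound in $\sigma$ is lost. Each phase lies in $[0,\pi/2)$, so $|L|+M+N<3\pi/2\le (3\pi/(2h_0))\,h$. Taking $C_r$ to be the maximum of the two constants, with $h=3r\sigma$, proves \eqref{eq:bulk-LMN-linear} for every $\sigma\ge0$.

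\emph{Consequence for $\mu,\nu,\ell$.} The bound \eqref{eq:bulk-parameter-linear} then follows exactly as at the end of the proof of Lemma~\ref{lem:uniform-secular-parameters}. We have $\mu-m=(2b-c+a)/\pi$ and $\nu-n=(2c+a-b)/\pi$, each bounded by $2(a+b+c)/\pi$. Also $|\ell+(m+n)|\le|\mu-m|+|\nu-n|$. This gives $C_r'=6C_r/\pi$.

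\emph{Main obstacle.} The only delicate point is the lower bound on the coefficient $2b-c+a+m\pi$, because the term $-c$ could a priori cancel $m\pi$. The uniform $\sqrt h$ bound from the previous lemma is exactly what excludes this once $h$ is small, and the trivial phase bound handles $h$ large. For $m=0$ this coefficient can be as small as $b$, which is why the square-root rate is genuinely needed there.
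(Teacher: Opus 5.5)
Your argument is correct and is essentially the paper's. When $m\ge1$, you lower-bound each tangent coefficient by a fixed constant ($\pi/2$, $\pi/2$, $2\pi$) and use $\tan t\ge t$, which gives $|L|+M+N\le 9h/(2\pi)$. The split into small and large $h$ is unnecessary. The phase ranges \eqref{eq:LMN_ranges} give $b,c<\pi/2$ for every $\sigma$, so $m\pi-c\ge\pi/2$ and $n\pi-b\ge\pi/2$ always hold, and your small-$h$ computation already works verbatim for all $\sigma\ge0$.
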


\begin{proof}
Since $m\geq1$ and $n\geq m$,
\eqref{eq:secular-a} gives
\[
    h
    \geq
    (m+n)\pi\tan a
    \geq
    2\pi a,
\]
and hence, $a\leq h/(2\pi)$.  Moreover, since $c<\pi/2$,
\[
    2b-c+a+m\pi
    \geq
    m\pi-c
    \geq
    \frac{\pi}{2}.
\]
Thus, \eqref{eq:secular-b} implies
\[
    h\geq\frac{\pi}{2}\tan b
    \geq\frac{\pi}{2}b,
\]
so that $b\leq2h/\pi$.  Similarly,
$c\leq2h/\pi$.  Since $h=3r\sigma$,
\eqref{eq:bulk-LMN-linear} follows, and
\eqref{eq:bulk-parameter-linear} follows from
\eqref{eq:mu_nu} and \eqref{eq:ell_parameter}.
\end{proof}

\begin{remark}
\label{rem:sqrt-robin-scale}
The square-root rate in
Lemma~\ref{lem:uniform-secular-parameters} is sharp for the full family
of modal indices.  For $(m,n)=(0,0)$, symmetry gives $b=c$, and the
secular equations reduce to
\[
    2(a+b)\tan a=h,
    \qquad
    (a+b)\tan b=h.
\]
Hence,
\[
    \tan b=2\tan a.
\]
As $h\downarrow0$,
\[
    b=2a+O(a^3).
\]
Substitution into the first equation gives
\[
    6a^2+O(a^4)=h,
\]
and therefore,
\begin{equation}
    a\sim\sqrt{\frac h6},
    \qquad
    b=c\sim\sqrt{\frac{2h}{3}}.
\label{eq:ground-branch-sqrt-asymptotics}
\end{equation}
Thus, an $O(\sigma)$ estimate cannot hold uniformly over all modal
indices.
\end{remark}

The phase parameters in McCartin's modal representation are
\begin{equation}
    \delta_1=L-M-N,\qquad
    \delta_2=-L+M-N,\qquad
    \delta_3=-L-M+N.
\label{eq:robin-phase-parameters}
\end{equation}

\begin{corollary}[Uniform Robin phase bounds]
\label{cor:uniform-robin-phases}
For $\sigma$ in a fixed bounded interval, there exists $C_r>0$ such
that
\begin{equation}
    \max_{1\leq j\leq3}|\delta_j|
    \leq C_r\sqrt\sigma
\label{eq:delta-sqrt-bound}
\end{equation}
uniformly in $(m,n)$.
\end{corollary}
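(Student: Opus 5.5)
The corollary follows directly from Lemma~\ref{lem:uniform-secular-parameters} together with the definitions \eqref{eq:robin-phase-parameters}. Each $\delta_j$ is a signed sum of the three phases $L,M,N$ with coefficients $\pm1$. The triangle inequality therefore gives
\[
    |\delta_j|\leq |L|+|M|+|N|,\qquad j=1,2,3.
\]
By \eqref{eq:LMN-uniform-bound}, the right-hand side is at most $\bigl(2+\tfrac1{\sqrt2}\bigr)\sqrt h$, where $h=3r\sigma$.

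Substituting $h$, we obtain
\[
    \max_{1\leq j\leq3}|\delta_j|
    \leq
    \Bigl(2+\tfrac1{\sqrt2}\Bigr)\sqrt{3r}\,\sqrt\sigma .
\]
We therefore take $C_r=\bigl(2+\tfrac1{\sqrt2}\bigr)\sqrt{3r}$. This constant depends only on the inradius $r$, hence only on the side length $L$ of the triangle. It is independent of $(m,n)$ because the bound \eqref{eq:abc-sqrt-bound} holds for every admissible pair $0\leq m\leq n$, including the edge family $m=0$ and the ground branch.

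The bound in fact holds for every $\sigma\geq0$, so restricting to a bounded interval is harmless. The only point needing care is that \eqref{eq:abc-sqrt-bound} is used on the exact McCartin phase triple, normalized by the ranges \eqref{eq:LMN_ranges}. That is exactly the setting of Lemma~\ref{lem:uniform-secular-parameters}, so there is no real obstacle.

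Remark~\ref{rem:sqrt-robin-scale} shows that the square-root rate cannot be improved uniformly: on the branch $(0,0)$, $\delta_1=L-M-N\sim-\bigl(\sqrt{h/6}+2\sqrt{2h/3}\bigr)$ is of exact order $\sqrt\sigma$. For $m\geq1$, Lemma~\ref{lem:bulk-secular-linear} gives the improved bound $\max_j|\delta_j|\leq C_r\sigma$.
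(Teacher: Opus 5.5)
Your proof is correct and is the same as the paper's: the triangle inequality applied to the definitions \eqref{eq:robin-phase-parameters} gives $|\delta_j|\le |L|+M+N$, and \eqref{eq:LMN-uniform-bound} with $h=3r\sigma$ then yields the bound. Your explicit constant $C_r=(2+1/\sqrt2)\sqrt{3r}$ is accurate, and so are your side remarks that the $(0,0)$ branch shows the square-root rate is sharp and that the bound improves to linear in $\sigma$ for $m\ge1$.
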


\begin{proof}
By \eqref{eq:robin-phase-parameters},
\[
    |\delta_j|\leq |L|+M+N.
\]
The conclusion follows from \eqref{eq:LMN-uniform-bound} and
$h=3r\sigma$.
\end{proof}

\begin{proof}[Proof of Lemma~\ref{lem:uniform_frequency_perturbation}]
Every wave vector in \eqref{eq:robin_finite_exponential} is a fixed
linear function, determined by the geometry of $T$, of
$(\ell,\mu,\nu)$.  At $\sigma=0$ these parameters are
\[
    (-(m+n),m,n).
\]
Lemma~\ref{lem:uniform-secular-parameters} gives
\[
    |\ell+(m+n)|
    +|\mu-m|
    +|\nu-n|
    \leq C_r\sqrt\sigma
\]
uniformly in $(m,n)$.  Since the number of frequencies in each modal
family is uniformly bounded, the corresponding frequency displacement
is $O(\sqrt\sigma)$.  If $m\geq1$,
Lemma~\ref{lem:bulk-secular-linear} gives the stronger
$O(\sigma)$ bound.
\end{proof}

\subsection{Proof of uniform modal-space stability}
\label{subsec:proof-uniform-modal-stability}

\begin{lemma}[Uniform comparison of modal functions]
\label{lem:uniform-modal-function-comparison}
There exist $\sigma_1>0$ and $C>0$ such that, for every admissible
$(m,n)$, every nonzero allowed parity $\varepsilon\in\{s,a\}$, and
$0\leq\sigma\leq\sigma_1$,
\begin{equation}
    \left\|
        T_{m,n,\sigma}^{\varepsilon}
        -T_{m,n,0}^{\varepsilon}
    \right\|_{L^2(T)}
    \leq C\sqrt\sigma.
\label{eq:uniform-mode-L2-comparison}
\end{equation}
The constant is independent of $(m,n)$.
\end{lemma}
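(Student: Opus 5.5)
The plan is a term-by-term comparison in McCartin's formula \eqref{eq:mccartin-modal-formula}. Each mode $T^{\varepsilon}_{m,n,\sigma}$ is a sum of three products of the form
\[
\cos\bigl(\alpha_j(3r-y)-\delta_j\bigr)\,\phi\bigl(\beta_j(x-\sqrt3 r)\bigr),\qquad \phi\in\{\cos,\sin\}.
\]
The longitudinal frequencies $\alpha_j$ are fixed multiples of $\ell,\mu,\nu$, and the transverse frequencies $\beta_j$ are fixed multiples of $\mu-\nu$, $\nu-\ell$, $\ell-\mu$. The coefficients in front of the three terms do not depend on $(m,n,\sigma)$. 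It therefore suffices to bound the difference of each product at $\sigma$ and at $\sigma=0$ in $L^\infty(T)$, since $\|\cdot\|_{L^2(T)}\le|T|^{1/2}\|\cdot\|_{L^\infty(T)}$.

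First, use the elementary inequalities
\[
|\cos s-\cos s'|\le|s-s'|,\qquad |\sin s-\sin s'|\le|s-s'|,\qquad |ab-a'b'|\le|a-a'|+|b-b'|
\]
for $|a|,|a'|,|b|,|b'|\le1$. Each product then differs from its Neumann counterpart by at most the sum of the two argument differences. For $(x,y)\in T$ the variables $3r-y$ and $x-\sqrt3 r$ range over bounded sets of size $O(r)$. The longitudinal argument difference is therefore at most
\[
\frac{\pi}{3r}\,\bigl|\ell-\ell(0)\bigr|\cdot 3r+|\delta_j|,
\]
and similarly for $\mu$ and $\nu$. The transverse argument difference is at most $C\bigl(|\mu-m|+|\nu-n|+|\ell+m+n|\bigr)$.

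Next, Lemma~\ref{lem:uniform-secular-parameters} gives
\[
|\mu-m|+|\nu-n|+|\ell+(m+n)|\le C\sqrt h = C\sqrt{3r\sigma}
\]
uniformly in $(m,n)$, and Corollary~\ref{cor:uniform-robin-phases} gives $|\delta_j|\le C_r\sqrt\sigma$. At $\sigma=0$ one has $\delta_j=0$ and $(\ell,\mu,\nu)=(-(m+n),m,n)$. Summing the three terms yields
\[
\bigl\|T^{\varepsilon}_{m,n,\sigma}-T^{\varepsilon}_{m,n,0}\bigr\|_{L^\infty(T)}\le C\sqrt\sigma
\]
with $C$ depending only on $r$. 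This gives \eqref{eq:uniform-mode-L2-comparison} for any $\sigma_1>0$, and in particular on $0\le\sigma\le\sigma_1$.

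The main point to watch is that no factor of the spectral size enters. This is guaranteed because the argument differences involve only the \emph{displacements} of $\ell,\mu,\nu$, which are bounded uniformly, multiplied by the bounded domain variables; the large frequencies $m,n$ themselves cancel in the difference. The hard part of the argument is really the uniform secular bound, already proved.

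One remaining subtlety is the restriction to "nonzero allowed parity". For $m=n$ the antisymmetric mode vanishes identically for all $\sigma$, by Remark~\ref{rem:modal-parity-degeneracy}, so the estimate is trivial there. In all other cases the bound applies directly. No normalization is involved, since the statement compares the unnormalized McCartin functions. Normalization will matter only later, for the subspace gap, where one must also bound $\|T^{\varepsilon}_{m,n,0}\|_{L^2}$ from below uniformly.
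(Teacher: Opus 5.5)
Your proposal is correct and follows essentially the same route as the paper. Both arguments compare McCartin's formula \eqref{eq:mccartin-modal-formula} term by term. They use the index-uniform $O(\sqrt\sigma)$ bounds on the displacements of $(\ell,\mu,\nu)$ and on the phases $\delta_j$, apply the Lipschitz and bounded-product inequalities to get a uniform $L^\infty(T)$ estimate, and then multiply by $|T|^{1/2}$. Your observations that only the displacements (not the size of $m,n$) enter the argument differences, and that the $m=n$ antisymmetric case is trivial, also match the paper.
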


\begin{proof}
Use the explicit representation, as described in (\ref{eq:mccartin-modal-formula}).
Every coefficient of $x$ or $y$ occurring in a trigonometric argument
is a fixed linear function of $(\ell,\mu,\nu)$, while the additive
phases are $\delta_1,\delta_2,\delta_3$. By Lemma
\ref{lem:uniform-secular-parameters} and Corollary \ref{cor:uniform-robin-phases}, the
change in each such coefficient and phase from its Neumann value is
$O(\sqrt\sigma)$ uniformly in $(m,n)$. To make the frequency
uniformity explicit, consider the first separated term. Writing
$\ell_0=-(m+n)$, one has uniformly for $(x,y)\in T$,
\begin{align*}
&\left|
\frac{\pi\ell}{3r}(3r-y)-\delta_1
-
\frac{\pi\ell_0}{3r}(3r-y)
\right|\\
&\qquad\leq
\frac{\pi}{3r}|3r-y|\,|\ell-\ell_0|+|\delta_1|
\leq C_T\sqrt\sigma,
\end{align*}
and, writing $\mu_0=m$, $\nu_0=n$,
\[
\left|
\frac{\sqrt3\pi}{9r}
\bigl[(\mu-\nu)-(\mu_0-\nu_0)\bigr]
(x-\sqrt3r)
\right|
\leq C_T\sqrt\sigma.
\]
The remaining four trigonometric arguments are treated identically,
since their frequency coefficients are fixed linear combinations of
$\ell,\mu,\nu$. In particular, no factor depending on the size of the
base frequencies $m,n$ appears in these argument differences. Because
$T$ is bounded, every complete trigonometric argument in
(\ref{eq:mccartin-modal-formula}) therefore changes by
$O(\sqrt\sigma)$ uniformly on $T$ and uniformly in the modal indices.
Using
\[
    |\sin x-\sin y|\leq|x-y|,
    \qquad
    |\cos x-\cos y|\leq|x-y|,
\]
and, for products,
\[
    |f_1f_2-g_1g_2|
    \leq |f_1-g_1|+|f_2-g_2|
\]
whenever $|f_j|,|g_j|\leq1$, each of the three separated terms differs
from its Neumann counterpart by at most $C_T\sqrt\sigma$. Hence,
\[
    \sup_{x\in T}
    \left|T_{m,n,\sigma}^{\varepsilon}(x)
          -T_{m,n,0}^{\varepsilon}(x)\right|
    \leq C_T\sqrt\sigma
\]
uniformly in $(m,n)$ and in the allowed parity. Multiplication by
$|T|^{1/2}$ proves the inequality (\ref{eq:uniform-mode-L2-comparison}).
\end{proof}

\begin{lemma}[Uniform norm and normalization bounds]
\label{lem:uniform-modal-normalization}
Let $T_{m,n,0}^{\varepsilon}$ denote the standard Neumann
representatives obtained from Eq. (\ref{eq:mccartin-modal-formula}) at
$\sigma=0$. Then, every nonzero representative satisfies
\begin{equation}
    \|T_{m,n,0}^{\varepsilon}\|_{L^2(T)}^2
    \geq c_T^2,
    \qquad
    c_T^2:=\frac{9\sqrt3\,r^2}{4}.
\label{eq:uniform-neumann-norm-lower}
\end{equation}
Moreover, after reducing $\sigma_1>0$ if necessary,
\begin{equation}
    \|T_{m,n,\sigma}^{\varepsilon}\|_{L^2(T)}
    \geq \frac{c_T}{2}
\label{eq:uniform-robin-norm-lower}
\end{equation}
for all admissible indices, all nonzero allowed parities, and
$0\leq\sigma\leq\sigma_1$. If
\[
    e_{m,n,\sigma}^{\varepsilon}
    :=
    \frac{T_{m,n,\sigma}^{\varepsilon}}
         {\|T_{m,n,\sigma}^{\varepsilon}\|_{L^2(T)}},
\]
then
\begin{equation}
    \left\|
        e_{m,n,\sigma}^{\varepsilon}
        -e_{m,n,0}^{\varepsilon}
    \right\|_{L^2(T)}
    \leq C_T'\sqrt\sigma
\label{eq:normalized-mode-closeness}
\end{equation}
uniformly in the indices.
\end{lemma}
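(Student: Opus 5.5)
The plan is to compute the Neumann norms exactly by passing to Fourier coefficients on the unfolded torus, and then to obtain the Robin statements by perturbation from Lemma~\ref{lem:uniform-modal-function-comparison}.

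\textbf{Neumann lower bound.} Fix a nonzero Neumann representative $u=T^{\varepsilon}_{m,n,0}$ and expand it as in \eqref{eq:robin_finite_exponential} at $\sigma=0$:
\[
u=\sum_{\xi}a_\xi e^{i\xi\cdot x},
\]
after coincident frequencies are combined.

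At $\sigma=0$ the frequencies lie in the dual lattice $\Lambda^*$. Moreover $u$ is a Neumann eigenfunction, so by Lemma~\ref{lem:reflection_extension} its even extension $U$ is the same trigonometric polynomial on $\mathbb R^2$, and $U\in\mathcal H_\lambda$. The characters are orthogonal on a fundamental domain $\mathcal F$, so
\[
\|U\|^2_{L^2(\mathcal F)}=|\mathcal F|\sum_\xi|a_\xi|^2 .
\]
Combining this with \eqref{eq:reflection_lattice_covolume} and \eqref{eq:global_norm_relation} gives
\[
\|u\|^2_{L^2(T)}=|T|\sum_\xi|a_\xi|^2,\qquad |T|=3\sqrt3\,r^2 .
\]
Hence \eqref{eq:uniform-neumann-norm-lower} is equivalent to $\sum_\xi|a_\xi|^2\geq 3/4$.

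I would check this by a finite case analysis driven by the pattern of coincidences among the twelve exponentials. Each of the three separated products in \eqref{eq:mccartin-modal-formula} is a product of two trigonometric factors, so it contributes four exponentials with coefficients of modulus $1/4$.
\begin{itemize}
\item For $1\leq m<n$ the twelve frequencies are distinct. Then $\sum|a_\xi|^2=12/16=3/4$ exactly, which is where the constant comes from.
\item For $m=n\geq1$ (symmetric parity only) the frequencies merge in pairs with equal signs. This gives six coefficients of modulus $1/2$ and a sum of $3/2$.
\item For $m=0<n$, Remark~\ref{rem:modal-parity-degeneracy} identifies the coincidences. The surviving coefficients, including the uncancelled frequency-$2n$ term, give a sum of at least $3/4$ in both parities.
\item For $(0,0)$ we have $u\equiv3$, so the sum is $9$.
\end{itemize}
The main obstacle is making this coincidence bookkeeping airtight. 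In particular, I must verify that no partial cancellation in the $m=0$ antisymmetric case pushes the sum below $3/4$. I would do this by listing the wave vectors explicitly as linear functions of $(\ell,\mu,\nu)=(-n,0,n)$.

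\textbf{Robin lower bound.} By Lemma~\ref{lem:uniform-modal-function-comparison} and the triangle inequality,
\[
\|T^\varepsilon_{m,n,\sigma}\|\geq c_T-C\sqrt\sigma .
\]
Shrinking $\sigma_1$ so that $C\sqrt{\sigma_1}\leq c_T/2$ gives \eqref{eq:uniform-robin-norm-lower}, uniformly in the indices. The restriction to nonzero allowed parities matters here: the vanishing mode $T^a_{m,m,\sigma}$ is excluded, since it is identically zero for all $\sigma$.

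\textbf{Normalized closeness.} For nonzero $f,g$ we have the elementary inequality
\[
\Bigl\|\tfrac{f}{\|f\|}-\tfrac{g}{\|g\|}\Bigr\|\leq\tfrac{2\|f-g\|}{\|g\|}.
\]
Apply it with $f=T^\varepsilon_{m,n,\sigma}$ and $g=T^\varepsilon_{m,n,0}$. Using \eqref{eq:uniform-neumann-norm-lower} and \eqref{eq:uniform-mode-L2-comparison}, this yields \eqref{eq:normalized-mode-closeness} with $C_T'=2C/c_T$, independent of $(m,n)$.
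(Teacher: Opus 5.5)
Your proposal is correct. The Robin lower bound is exactly the paper's reverse-triangle-inequality step, so the differences lie in the other two parts.

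\textbf{Neumann bound.} For the Neumann bound the paper computes nothing. It quotes the norms $9\sqrt3r^2/4$ (for $m<n$) and $9\sqrt3r^2/2$ (for $m=n>0$) from McCartin and Rudnick--Wigman, and checks $(0,0)$ by hand. You instead derive the norms from the paper's own machinery: Lemma~\ref{lem:reflection_extension} plus Parseval on $\mathbb T^2_\Lambda$ gives $\|u\|^2_{L^2(T)}=|T|\sum_\xi|a_\xi|^2$. This makes the lemma self-contained and shows where $c_T^2$ comes from, namely twelve distinct frequencies of modulus $1/4$.

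You should say explicitly why the even extension coincides with the McCartin trigonometric polynomial on all of $\mathbb R^2$. Both are trigonometric polynomials agreeing on the open set $T$, so they agree everywhere.

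\textbf{The case $m=0<n$.} The step you flagged as the main obstacle closes cleanly. For $(\ell,\mu,\nu)=(-n,0,n)$, the first and third separated terms of \eqref{eq:mccartin-modal-formula} are literally the same function, in both parities. They therefore add rather than cancel, giving four frequencies of modulus $1/2$. The middle term contributes two more frequencies of modulus $1/2$. The sum is $3/2$, so $\|T^{\varepsilon}_{0,n,0}\|^2=9\sqrt3r^2/2$.

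This also shows that the paper's blanket value $9\sqrt3r^2/4$ for all $m<n$ is off by a factor of two when $m=0$. The error is harmless, since only the lower bound is needed.

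\textbf{Normalization step.} You put the Neumann norm in the denominator, whereas the paper uses $\|T^\varepsilon_{m,n,\sigma}\|\geq c_T/2$. Both versions of the elementary inequality are valid. Yours gives the slightly better constant $2C/c_T$ in place of $4C/c_T$. The Robin lower bound is still needed to ensure $e^{\varepsilon}_{m,n,\sigma}$ is well defined.
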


\begin{proof}
For $m<n$, the standard symmetric and antisymmetric Neumann modes
satisfy
\[
    \|T_{m,n,0}^{s}\|_2^2
    =
    \|T_{m,n,0}^{a}\|_2^2
    =
    \frac{9\sqrt3\,r^2}{4},
\]
while for $m=n>0$,
\[
    \|T_{m,m,0}^{s}\|_2^2
    =
    \frac{9\sqrt3\,r^2}{2};
\]
see McCartin~\cite{McCartin2002Neumann} and the summary in
Rudnick--Wigman~\cite{RudnickWigman2022}.  For $(m,n)=(0,0)$,
\cref{eq:mccartin-modal-formula} reduces to
$T_{0,0,0}^{s}\equiv3$, and since $|T|=3\sqrt3\,r^2$,
\[
    \|T_{0,0,0}^{s}\|_2^2=27\sqrt3\,r^2.
\]
This proves (\ref{eq:uniform-neumann-norm-lower}) for every nonzero
allowed representative.
Let
\[
    \eta_{m,n}^{\varepsilon}(\sigma)
    :=
    \|T_{m,n,\sigma}^{\varepsilon}
      -T_{m,n,0}^{\varepsilon}\|_2.
\]
By Lemma \ref{lem:uniform-modal-function-comparison},
$\eta_{m,n}^{\varepsilon}(\sigma)\leq C\sqrt\sigma$ uniformly. Choose
$\sigma_1$ so that $C\sqrt{\sigma_1}\leq c_T/2$. The reverse triangle
inequality then yields the inequality (\ref{eq:uniform-robin-norm-lower}).\\
For nonzero $f,g$, write instead
\[
\frac{f}{\|f\|_2}-\frac{g}{\|g\|_2}
=
\frac{f-g}{\|f\|_2}
+
g\left(\frac1{\|f\|_2}-\frac1{\|g\|_2}\right).
\]
Since
\[
\|g\|_2
\left|\frac1{\|f\|_2}-\frac1{\|g\|_2}\right|
=
\frac{|\|g\|_2-\|f\|_2|}{\|f\|_2}
\leq
\frac{\|f-g\|_2}{\|f\|_2},
\]
we obtain the denominator-safe estimate
\begin{equation}
\left\|\frac{f}{\|f\|_2}-\frac{g}{\|g\|_2}\right\|_2
\leq
\frac{2\|f-g\|_2}{\|f\|_2}.
\label{eq:normalization-safe}
\end{equation}
Applying this with
$f=T_{m,n,\sigma}^{\varepsilon}$ and
$g=T_{m,n,0}^{\varepsilon}$, and using
\cref{eq:uniform-robin-norm-lower}, gives
\[
    \left\|
        e_{m,n,\sigma}^{\varepsilon}
        -e_{m,n,0}^{\varepsilon}
    \right\|_2
    \leq
    \frac{4C}{c_T}\sqrt\sigma,
\]
which is the inequality (\ref{eq:normalized-mode-closeness}).
\end{proof}

\begin{proof}[Proof of Lemma \ref{lem:uniform_modal_stability}]
For $m=n$, only the symmetric representative is nonzero, so both
$\mathcal R_{m,m}(\sigma)$ and $\mathcal N_{m,m}$ are one-dimensional.
For $m<n$, both modal spaces are two-dimensional, with one symmetric
and one antisymmetric representative.  Reflection $\mathsf R$ across
the altitude is unitary on $L^2(T)$ and satisfies
\[
    \mathsf R e_{m,n,\sigma}^{s}=e_{m,n,\sigma}^{s},
    \qquad
    \mathsf R e_{m,n,\sigma}^{a}=-e_{m,n,\sigma}^{a}.
\]
Therefore,
\[
    \langle e_{m,n,\sigma}^{s},e_{m,n,\sigma}^{a}\rangle
    =
    \langle \mathsf R e_{m,n,\sigma}^{s},
            \mathsf R e_{m,n,\sigma}^{a}\rangle
    =-
    \langle e_{m,n,\sigma}^{s},e_{m,n,\sigma}^{a}\rangle,
\]
so the normalized symmetric and antisymmetric representatives are
orthogonal for every $\sigma$.\\
For a one-dimensional modal space,
the inequality (\ref{eq:normalized-mode-closeness}) immediately bounds both directed
subspace distances.  If $m<n$, write
\[
    u=\alpha_s e_{m,n,\sigma}^{s}
      +\alpha_a e_{m,n,\sigma}^{a},
    \qquad \|u\|_2=1.
\]
Orthogonality gives $|\alpha_s|^2+|\alpha_a|^2=1$. Setting
\[
    v=\alpha_s e_{m,n,0}^{s}
      +\alpha_a e_{m,n,0}^{a}
\]
and using the inequality (\ref{eq:normalized-mode-closeness}) gives
\[
    \|u-v\|_2
    \leq
    (|\alpha_s|+|\alpha_a|)C_T'\sqrt\sigma
    \leq
    \sqrt2\,C_T'\sqrt\sigma.
\]
The reverse comparison, starting with a unit vector in
$\mathcal N_{m,n}$, is identical. Consequently,
\begin{equation}
    \sup_{0\leq m\leq n}
    d\bigl(\mathcal R_{m,n}(\sigma),\mathcal N_{m,n}\bigr)
    \leq C_*\sqrt\sigma,
\label{eq:uniform-modal-gap-sqrt}
\end{equation}
which proves Lemma \ref{lem:uniform_modal_stability} with
$\varepsilon(\sigma)=C_*\sqrt\sigma$.
\end{proof}

\subsection{Completion of the small-Robin theorem}
\label{subsec:completion-small-robin}

\begin{proof}[Proof of Theorem \ref{thm:small_robin}]
Let $c_{V,N}>0$ denote the Neumann observation constant from
Theorem \ref{thm:neumann_unl}. By the inequality (\ref{eq:uniform-modal-gap-sqrt}), after
possibly reducing the modal-separation threshold, there exists
$C_*>0$ such that
\[
    \sup_{m,n}
    d\bigl(\mathcal R_{m,n}(\sigma),\mathcal N_{m,n}\bigr)
    \leq C_*\sqrt\sigma.
\]
Choose $\sigma_0>0$ below the threshold in
Proposition \ref{prop:small_robin_modal_separation} and so small that
\begin{equation}
    C_*\sqrt{\sigma_0}
    \leq
    \frac{\sqrt{c_{V,N}}}{2(1+\sqrt{c_{V,N}})}.
\label{eq:sigma0-choice}
\end{equation}
For $0<\sigma\leq\sigma_0$,
Lemma \ref{lem:small-robin-complete-eigenspace} shows that every Robin
eigenspace coincides with the complete McCartin modal space associated
with a unique desymmetrized index class. Its Neumann limiting modal
space is contained in a Neumann eigenspace and therefore satisfies
\[
    \|v\|_{L^2(V)}^2\geq c_{V,N}\|v\|_{L^2(T)}^2.
\]
Applying Lemma \ref{lem:observation_stability} with
$\delta=C_*\sqrt\sigma$ and the inequality (\ref{eq:sigma0-choice}) gives
\[
    \|u\|_{L^2(V)}
    \geq\frac12\sqrt{c_{V,N}}\,\|u\|_{L^2(T)}.
\]
Thus,
\begin{equation}
    \|u\|_{L^2(V)}^2
    \geq\frac{c_{V,N}}4\|u\|_{L^2(T)}^2
\label{eq:explicit-small-robin-bound}
\end{equation}
uniformly for $0<\sigma\leq\sigma_0$. The endpoint $\sigma=0$ follows
directly from Theorem \ref{thm:neumann_unl}, completing the proof.
\end{proof}


\section{The Robin problem on the square}
\label{sec:square_robin}

We next consider the unit square
\[
    S=(0,1)^2.
\]
Separation of variables reduces the Robin spectrum to the
one-dimensional Robin frequencies.  For fixed positive Robin
parameter, their high-frequency expansion gives a strict ordering
within the sum-of-two-squares shells.

\subsection{Separated Robin frequencies}
\label{subsec:square_1d_expansion}

Let $k_n(\sigma)$ denote the nonnegative one-dimensional Robin
frequency on $(0,1)$ associated with the Neumann branch
$n\in\mathbb Z_{\geq0}$.  For $n\geq1$, it is the solution near
$n\pi$ of
\begin{equation}
    \tan k
    =
    \frac{2\sigma k}{k^2-\sigma^2}.
\label{eq:square_1d_secular}
\end{equation}
The corresponding eigenvalues on $S$ are
\begin{equation}
    \Lambda_{n,m}^{\square}(\sigma)
    =
    k_n(\sigma)^2+k_m(\sigma)^2,
    \qquad
    0\leq n\leq m.
\label{eq:square_eigenvalues}
\end{equation}
The exchange $(n,m)\leftrightarrow(m,n)$ is accounted for by regarding
the unordered pair $\{n,m\}$ as a single modal class.

\begin{lemma}[Fixed-$\sigma$ expansion]
\label{lem:square_fixed_sigma_expansion}
Fix $\sigma>0$.  As $n\to\infty$,
\begin{equation}
\begin{split}
    k_n(\sigma)
    ={}&
    n\pi+\frac{2\sigma}{n\pi}
    -\frac{2\sigma^2(\sigma+6)}{3(n\pi)^3}\\
    &+
    \frac{2\sigma^3(3\sigma^2+40\sigma+120)}
         {15(n\pi)^5}
    +O_\sigma(n^{-7}),
\end{split}
\label{eq:square_kn_expansion}
\end{equation}
and
\begin{equation}
    k_n(\sigma)^2
    =
    \pi^2n^2+4\sigma
    -\frac{A_\sigma^{\square}}{n^2}
    +\frac{B_\sigma^{\square}}{n^4}
    +O_\sigma(n^{-6}),
\label{eq:square_kn2_expansion}
\end{equation}
where
\begin{equation}
    A_\sigma^{\square}
    =
    \frac{4\sigma^2(\sigma+3)}{3\pi^2}>0,
    \qquad
    B_\sigma^{\square}
    =
    \frac{4\sigma^3(\sigma^2+10\sigma+20)}{5\pi^4}>0.
\label{eq:square_AB}
\end{equation}
\end{lemma}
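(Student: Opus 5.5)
Write $k=n\pi+\varepsilon$ with $\varepsilon=\varepsilon_n(\sigma)$ small. Since $\tan k=\tan\varepsilon$, the secular equation \eqref{eq:square_1d_secular} becomes the fixed-point relation
\[
    \varepsilon=\arctan\!\left(\frac{2\sigma k}{k^2-\sigma^2}\right),
    \qquad k=n\pi+\varepsilon .
\]
The branch is selected so that $\varepsilon\in[0,\pi/2)$ and $\varepsilon\to0$ as $\sigma\to0$. For $n$ large the right-hand side is $O(\sigma/n)$, so $\varepsilon=O_\sigma(n^{-1})$. The first step is to make this rigorous by a contraction argument on an interval $[0,C_\sigma/n]$. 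The map $\varepsilon\mapsto\arctan(2\sigma k/(k^2-\sigma^2))$ has derivative $O_\sigma(n^{-2})$ there, so for $n\ge n_\sigma$ it is a contraction with a unique fixed point. This fixed point is the branch $k_n(\sigma)$, by continuity in $\sigma$ from the Neumann value $n\pi$. Alternatively, mirroring the proof of Lemma~\ref{lem:fixed_sigma_angular_expansion}, set $t=1/(n\pi)$ and $\varepsilon=t\,w$. The equation becomes analytic in $(w,t^2)$ near $t=0$, with nondegenerate linearization at $w=2\sigma$. The analytic implicit-function theorem then gives that $k_n/(n\pi)-1$ is an analytic function of $t^2$ times $t^2$. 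This explains why only odd powers of $(n\pi)^{-1}$ appear in \eqref{eq:square_kn_expansion}, and why the remainder is $O_\sigma(n^{-7})$ once the $t^5$ coefficient is known.

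The second step computes the coefficients by iteration. Expand the right-hand side in powers of $1/k$:
\[
    \frac{2\sigma k}{k^2-\sigma^2}=\frac{2\sigma}{k}+\frac{2\sigma^3}{k^3}+\frac{2\sigma^5}{k^5}+\cdots .
\]
Composing with $\arctan z=z-z^3/3+z^5/5-\cdots$ gives
\[
    \varepsilon=\frac{2\sigma}{k}+\frac{2\sigma^3-\tfrac83\sigma^3}{k^3}+O(k^{-5})
    =\frac{2\sigma}{k}-\frac{2\sigma^3}{3k^3}+c_5k^{-5}+\cdots .
\]
Here $c_5=2\sigma^5-8\sigma^5+\tfrac{32}{5}\sigma^5$ comes from collecting the $z$, $z^3$ and $z^5$ contributions. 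Next substitute $k=n\pi+\varepsilon$ and re-expand $1/k=(n\pi)^{-1}(1-\varepsilon/(n\pi)+\cdots)$. At order three this produces the correction $-2\sigma\cdot 2\sigma/(n\pi)^3$, giving $-\tfrac{2\sigma^3+12\sigma^2}{3(n\pi)^3}=-\tfrac{2\sigma^2(\sigma+6)}{3(n\pi)^3}$, which matches \eqref{eq:square_kn_expansion}. One further iteration yields the $(n\pi)^{-5}$ coefficient. I will carry this out by symbolic bookkeeping, checking it against the stated value $\tfrac{2\sigma^3(3\sigma^2+40\sigma+120)}{15}$.

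The third step squares the expansion. Write $k_n=n\pi+\alpha t+\beta t^3+\gamma t^5+O(t^7)$ with $t=(n\pi)^{-1}$. Then
\[
    k_n^2=\pi^2n^2+2\alpha+(\alpha^2+2\beta)t^2+(2\alpha\beta+2\gamma)t^4+O(t^6).
\]
With $\alpha=2\sigma$ the constant term is $4\sigma$. The $t^2$ term is $4\sigma^2-\tfrac43\sigma^2(\sigma+6)=-\tfrac43\sigma^2(\sigma+3)$, which gives $-A^\square_\sigma/n^2$ after inserting $t^2=1/(\pi^2n^2)$. The $t^4$ term similarly gives $B^\square_\sigma$, and positivity of both constants is immediate from \eqref{eq:square_AB}.

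The main obstacle is the fifth-order coefficient. This is purely algebraic but error-prone, since the $\varepsilon$-feedback into $1/k$ must be tracked through two iterations. I would cross-check it by verifying $B^\square_\sigma$ independently. Setting $K=k^2$ and expanding the equation directly in $1/K$ is a convenient alternative route for that check.
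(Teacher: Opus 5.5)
Your proposal is correct and follows essentially the same route as the paper. The paper also writes $k_n=n\pi+t\,z(t^2)$ with $t=(n\pi)^{-1}$, applies the analytic implicit-function theorem in $\rho=t^2$ at $z=2\sigma$, and reads off the coefficients by substitution, which your iteration does explicitly; the remaining bookkeeping closes as you expect, with $\gamma=16\sigma^3+\tfrac{16}{3}\sigma^4+\tfrac25\sigma^5$ and $2\gamma+2\alpha\beta=16\sigma^3+8\sigma^4+\tfrac45\sigma^5$, giving the stated $t^5$ coefficient and $B_\sigma^{\square}$.
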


\begin{proof}
Set $N=n\pi$, $t=N^{-1}$, and write
\[
    k_n=N+\delta_n,
    \qquad
    \delta_n=t z(t^2).
\]
Since $\tan(N+\delta_n)=\tan\delta_n$,
\eqref{eq:square_1d_secular} becomes
\begin{equation}
    \frac{\tan(tz)}{t}
    =
    \frac{2\sigma(1+t^2z)}
         {(1+t^2z)^2-\sigma^2t^2}.
\label{eq:square_scaled_secular}
\end{equation}
With $\rho=t^2$, both sides are analytic near
$(z,\rho)=(2\sigma,0)$, and the derivative with respect to $z$ of
their difference is equal to one at this point.  The analytic implicit
function theorem gives
\[
    z(\rho)
    =
    a_1+a_3\rho+a_5\rho^2+O_\sigma(\rho^3).
\]
Substitution into \eqref{eq:square_scaled_secular} gives
\[
    a_1=2\sigma,
    \qquad
    a_3=-\frac{2\sigma^2(\sigma+6)}3,
    \qquad
    a_5=
    \frac{2\sigma^3(3\sigma^2+40\sigma+120)}{15}.
\]
This proves \eqref{eq:square_kn_expansion}; squaring gives
\eqref{eq:square_kn2_expansion} and \eqref{eq:square_AB}.
\end{proof}
Consider a Neumann shell
\begin{equation}
    n^2+m^2=R^2.
\label{eq:square_neumann_shell}
\end{equation}
For $n\geq1$, set
\begin{equation}
    F_{\square}(n,m)
    =
    \frac1{n^2}+\frac1{m^2},
    \qquad
    G_{\square}(n,m)
    =
    \frac1{n^4}+\frac1{m^4}.
\label{eq:square_FG}
\end{equation}
Lemma~\ref{lem:square_fixed_sigma_expansion} gives
\begin{equation}
\begin{split}
    \Lambda_{n,m}^{\square}(\sigma)
    ={}&
    \pi^2R^2+8\sigma
    -A_\sigma^{\square}F_{\square}(n,m)
    +B_\sigma^{\square}G_{\square}(n,m)\\
    &+
    O_\sigma(n^{-6}+m^{-6}).
\end{split}
\label{eq:square_shell_expansion}
\end{equation}
In particular, the remainder is $O_\sigma(n^{-6})$ uniformly for
$m\geq n\to\infty$.  On each shell,
\begin{equation}
    G_{\square}
    =
    F_{\square}^2-\frac{2}{R^2}F_{\square}.
\label{eq:square_G_as_F}
\end{equation}
Thus, the first two nonconstant terms in
\eqref{eq:square_shell_expansion} are functions of the same shell
statistic $F_{\square}$.

\subsection{Same-shell splitting}
\label{subsec:square_shell_splitting}

\begin{proposition}[Same-shell splitting on the square]
\label{prop:square_same_shell_splitting}
For every fixed $\sigma>0$, there exists
$N_\sigma^{\square}\geq1$ such that, whenever
\[
    n^2+m^2={n'}^2+{m'}^2,
    \qquad
    N_\sigma^{\square}\leq n<n'\leq m'<m,
\]
one has
\begin{equation}
    \Lambda_{n,m}^{\square}(\sigma)
    <
    \Lambda_{n',m'}^{\square}(\sigma).
\label{eq:square_eventual_order}
\end{equation}
Consequently, on each Neumann shell a Robin spectral value occurs in
at most
\begin{equation}
    B_\sigma^{\square}:=N_\sigma^{\square}+1
\label{eq:square_one_shell_bound}
\end{equation}
unordered modal classes.
\end{proposition}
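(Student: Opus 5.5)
The plan is to imitate the triangle argument (Proposition~\ref{prop:eventual_within_shell_simplicity}), but to use the simpler separated structure given by \eqref{eq:square_shell_expansion} and \eqref{eq:square_G_as_F}. Fix a shell $n^2+m^2=R^2$ and two pairs with $N\le n<n'\le m'<m$. Subtracting the expansions, the constant terms $\pi^2R^2+8\sigma$ cancel. Writing $F=F_\square(n,m)$, $F'=F_\square(n',m')$ and $\Delta F:=F-F'$, we get
\[
\Lambda^{\square}_{n',m'}-\Lambda^{\square}_{n,m}
=A^\square_\sigma\Delta F-B^\square_\sigma\Bigl(F^2-F'^2-\tfrac{2}{R^2}\Delta F\Bigr)+O_\sigma(n^{-6}).
\]
The middle term equals $-B^\square_\sigma\,\Delta F\,(F+F'-2R^{-2})$. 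Since $F+F'\le 4n^{-2}$, for $n$ large this term is at most $\tfrac12A^\square_\sigma\Delta F$ in absolute value. Hence the difference is at least $\tfrac12A^\square_\sigma\Delta F-C_\sigma n^{-6}$, and the whole proof reduces to the lower bound $\Delta F\ge c\,n^{-5}$ (anything $\gg n^{-6}$ suffices).

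To obtain it, parametrize the shell by $x=n^2/R^2\in(0,1/2]$. Then $F=R^{-2}\phi(x)$ with $\phi(x)=\frac1x+\frac1{1-x}=\frac{1}{x(1-x)}$, which is strictly decreasing on $(0,1/2]$ with $\phi'(x)=-(1-2x)/(x^2(1-x)^2)$. I split into two regimes.

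\emph{Edge or large-ratio case}, $n'\ge 2n$ or $x\le\delta$. Directly, $F\ge n^{-2}$ and $F'\le 2n'^{-2}$. If $n'\ge 2n$, then $\Delta F\ge \tfrac12 n^{-2}$. Otherwise I use the mean value theorem in the variable $s=n^2$: with $\psi(s)=\frac1s+\frac1{R^2-s}$ we have $\Delta F=-\psi'(\xi)(n'^2-n^2)$. Here $n'^2-n^2\ge 2n+1$, and $-\psi'(\xi)=\xi^{-2}-(R^2-\xi)^{-2}$.

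\emph{Near-diagonal case}, which is the main obstacle. Here $-\psi'(\xi)=\frac{(R^2-2\xi)R^2}{\xi^2(R^2-\xi)^2}$ degenerates as $\xi\to R^2/2$, so the gap comes from the arithmetic instead. Since $n<n'\le m'$, we have $n'^2\le R^2/2$, and therefore
\[
R^2-2\xi\ge R^2-2n'^2=m'^2-n'^2\ge 0 .
\]
This is only useful if $m'\neq n'$. When $m'=n'$, I will avoid the mean value theorem and compute exactly:
\[
\Delta F=\frac{R^2}{n^2m^2}-\frac{R^2}{n'^2m'^2}=R^2\,\frac{n'^2m'^2-n^2m^2}{n^2m^2n'^2m'^2}.
\]
The numerator is a positive integer: $nm<n'm'$ because $(n'm')^2-(nm)^2=\bigl((n^2-m^2)^2-(n'^2-m'^2)^2\bigr)/4$ and $|n'^2-m'^2|<|n^2-m^2|$. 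Moreover $n^2m^2n'^2m'^2\le R^8$ and $R^2\ge n^2$. This gives $\Delta F\ge R^{-6}\ge c\,n^{-6}$ in general, which is not quite enough against the $O(n^{-6})$ remainder. I would refine it as follows. The numerator $(n'm')^2-(nm)^2$ is divisible by $4$ and factors as $(n'm'-nm)(n'm'+nm)$, so it is at least $n'm'+nm\gtrsim nR$. This yields $\Delta F\gtrsim R^2\cdot nR\cdot R^{-8}\gtrsim n^{-5}$ whenever $n\asymp R$, i.e.\ in the bulk. In the edge regime $n\ll R$, the exact formula gives $\Delta F\ge R^2(2n+1)\,n'^2\cdot(\dots)$, which I expect to be of order $n^{-3}$, consistent with the mean value theorem estimate above. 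The exact-formula bound is the step I would check most carefully.

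Combining the two regimes, $\Delta F\gtrsim n^{-5}\gg n^{-6}$, so the strict ordering \eqref{eq:square_eventual_order} holds for $n\ge N^\square_\sigma$. The counting statement then follows as in Corollary~\ref{cor:within_shell_coincidence_bound}. At most one class with $n\ge N^\square_\sigma$ can attain a given value. For each fixed $n<N^\square_\sigma$, the shell equation determines $m$ uniquely. Hence a value is attained by at most $N^\square_\sigma+1$ classes.
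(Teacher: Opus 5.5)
Your proof is correct and has the same architecture as the paper's. You subtract the shell expansions and write $\Delta G=\Delta F\,(F+F'-2R^{-2})$, so the bracket is $A^\square_\sigma+O_\sigma(n^{-2})$. You then bound $\Delta F$ from below separately at the edge and in the bulk: $\gtrsim n^{-2}$ at the edge if $n'\ge 2n$, $\gtrsim n^{-3}$ at the edge otherwise, and a separate bulk estimate.

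The difference is how you get the bulk gap. The paper uses the exact identity
\[
n'^2m'^2-n^2m^2=(n'^2-n^2)(m^2-n'^2).
\]
By integrality each factor is at least $2n+1$; for the second this is because $m>m'\ge n'$. Hence $\Delta F=R^2(P'-P)/(PP')\gtrsim R^{-4}$ directly, with no diagonal special case: the second factor involves $m$, not $m'$, so it does not vanish when $n'=m'$.

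Your mean-value step bounds $-\psi'(\xi)$ through the endpoint $n'^2$. That is exactly what degenerates at $n'=m'$ and forces your extra case. If instead you integrate $-\psi'(s)\ge 16(R^2-2s)R^{-6}$ over $[n^2,n'^2]$, you get $\Delta F\ge 16R^{-6}(n'^2-n^2)(m^2-n'^2)$, which is the paper's bound.

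Your fallback is also valid. From $n'm'-nm\ge 1$ you get $(n'm')^2-(nm)^2\ge n'm'+nm\gtrsim nR$. This alone covers the whole bulk, since it gives $\Delta F\gtrsim nR^{-5}\asymp R^{-4}$ there. So the $m'\ne n'$ sub-split and the divisibility by $4$ are superfluous. Your route resembles the profile arguments the paper uses for the triangle and the nonsquare rectangles, where degeneracy at the stationary point has to be compensated by arithmetic spacing. For the square, the product identity avoids this altogether.

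Your tentative sentence about the exact formula at the edge is unnecessary. If $n'<2n$ and $n^2\le\delta R^2$, then $\xi<n'^2<4\delta R^2<R^2/2$, so $m'=n'$ cannot occur there. Moreover
\[
-\psi'(\xi)\ge \tfrac1{16}n^{-4}-(1-4\delta)^{-2}\delta^2 n^{-4}\gtrsim n^{-4},
\]
hence $\Delta F\gtrsim n^{-3}$. You should state this explicitly, since you left that case unfinished.
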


\begin{proof}
Consider two ordered representations of the same shell and set
\[
    P=n^2m^2,
    \qquad
    P'={n'}^2{m'}^2.
\]
Since
\[
    m^2=R^2-n^2,
    \qquad
    {m'}^2=R^2-{n'}^2,
\]
we have
\begin{equation}
    P'-P
    =
    ({n'}^2-n^2)(m^2-{n'}^2)>0.
\label{eq:square_product_gap}
\end{equation}
Moreover,
\[
    F_{\square}(n,m)=\frac{R^2}{P},
\]
and hence,
\begin{equation}
    \Delta F
    :=
    F_{\square}(n,m)-F_{\square}(n',m')
    >0.
\label{eq:square_DeltaF_positive}
\end{equation}
By \eqref{eq:square_G_as_F},
\begin{equation}
    \Delta G
    =
    \Delta F
    \left(
        F_{\square}(n,m)
        +F_{\square}(n',m')
        -\frac{2}{R^2}
    \right).
\label{eq:square_DeltaG_factor}
\end{equation}
Subtracting the two expansions in
\eqref{eq:square_shell_expansion} gives
\begin{equation}
\begin{split}
    \Lambda_{n',m'}^{\square}(\sigma)
    -\Lambda_{n,m}^{\square}(\sigma)
    ={}&
    \Delta F
    \bigg[
        A_\sigma^{\square}
        -
        B_\sigma^{\square}
        \left(
            F_{\square}(n,m)
            +F_{\square}(n',m')
            -\frac{2}{R^2}
        \right)
    \bigg] \\
    &+O_\sigma(n^{-6}).
\end{split}
\label{eq:square_difference_expansion}
\end{equation}
Since $m,m'\geq n$,
\[
    F_{\square}(n,m)
    +F_{\square}(n',m')
    =O(n^{-2}),
\]
so the expression in brackets is
$A_\sigma^{\square}+O_\sigma(n^{-2})$ and is bounded below by
$A_\sigma^{\square}/2$ for all sufficiently large $n$.\\
We next estimate $\Delta F$.  Since
\[
    \Delta F
    =
    \frac{R^2(P'-P)}{PP'},
\]
suppose first that $n\geq\delta R$ for some fixed
$\delta>0$.  Then $P,P'=O(R^4)$, while
\eqref{eq:square_product_gap} and the integrality of the indices give
$P'-P\geq1$.  Hence
\begin{equation}
    \Delta F\geq c_\delta R^{-6}.
\label{eq:square_bulk_gap}
\end{equation}
For the comparison with the remainder, however, one uses the stronger
spacing furnished by \eqref{eq:square_product_gap}.  Since
$n'<m$ and all indices are integers,
\[
    {n'}^2-n^2\geq 2n+1,
    \qquad
    m^2-{n'}^2\geq 2n'+1.
\]
Thus, in the bulk,
\begin{equation}
    P'-P\geq c_\delta R^2,
    \qquad
    \Delta F\geq c_\delta R^{-4}.
\label{eq:square_bulk_gap_strong}
\end{equation}

Suppose next that $n<\delta R$, with $\delta>0$ chosen sufficiently
small.  If $n'\geq2n$, then
\[
    \Delta F\geq c n^{-2}.
\]
If $n<n'<2n$, then
$m^2-{n'}^2\geq cR^2$, and
\eqref{eq:square_product_gap} gives
\begin{equation}
    \Delta F\geq c n^{-3}.
\label{eq:square_edge_gap}
\end{equation}
In both cases the remainder
$O_\sigma(n^{-6})$ in
\eqref{eq:square_difference_expansion} is of lower order than
$\Delta F$.  After increasing $N_\sigma^{\square}$ if necessary,
\eqref{eq:square_eventual_order} follows.

It remains to count the indices below this threshold.  On a fixed
shell, each $0\leq n<N_\sigma^{\square}$ determines at most one
$m\geq n$.  Hence there are at most $N_\sigma^{\square}$ such modal
classes.  Among the remaining classes,
\eqref{eq:square_eventual_order} allows at most one occurrence of a
fixed Robin spectral value.  This proves
\eqref{eq:square_one_shell_bound}.
\end{proof}

\subsection{Global coincidence bound and observation}
\label{subsec:square_global_closure}

For fixed $\sigma>0$, set
\begin{equation}
    d_n(\sigma)
    :=
    k_n(\sigma)^2-\pi^2n^2.
\label{eq:square_dn}
\end{equation}
The one-dimensional Robin eigenvalue branches are strictly increasing
in $\sigma$.  Indeed, the Hellmann--Feynman formula gives
\[
    \frac{d}{d\sigma}k_n(\sigma)^2
    =
    |\phi_n(0)|^2+|\phi_n(1)|^2>0
\]
for an $L^2$-normalized eigenfunction $\phi_n$.  Hence
$d_n(\sigma)>0$ for $\sigma>0$.  Moreover,
Lemma~\ref{lem:square_fixed_sigma_expansion} gives
\[
    d_n(\sigma)\longrightarrow4\sigma
    \qquad (n\to\infty).
\]
It follows that
\begin{equation}
    D_\sigma^{(1)}
    :=
    \sup_{n\geq0}d_n(\sigma)
    <\infty.
\label{eq:square_1d_displacement_bound}
\end{equation}
Thus,
\begin{equation}
    0<
    \Lambda_{n,m}^{\square}(\sigma)
    -\pi^2(n^2+m^2)
    <
    D_\sigma^{\square},
    \qquad
    D_\sigma^{\square}
    :=
    2D_\sigma^{(1)}+1.
\label{eq:square_2d_displacement}
\end{equation}
The Neumann reference levels have the form
\[
    E_Q=\pi^2Q,
\]
with $Q$ representable as a sum of two squares.  In particular,
\begin{equation}
    \mathcal N_{0,\square}(L)
    \leq
    \left\lfloor\frac{L}{\pi^2}\right\rfloor+1.
\label{eq:square_reference_density}
\end{equation}
Corollary~\ref{cor:abstract_bounded_displacement}, together with
\eqref{eq:square_one_shell_bound} and
\eqref{eq:square_2d_displacement}, now gives
\begin{equation}
\begin{split}
    K_\sigma^{\square}
    &\leq
    B_\sigma^{\square}
    \mathcal N_{0,\square}(D_\sigma^{\square})\\
    &\leq
    (N_\sigma^{\square}+1)
    \left(
        \left\lfloor
            \frac{D_\sigma^{\square}}{\pi^2}
        \right\rfloor+1
    \right)
    <\infty.
\end{split}
\label{eq:square_global_K_bound}
\end{equation}
A one-dimensional Robin eigenfunction is a linear combination of
$e^{ik_nx}$ and $e^{-ik_nx}$.  Hence
$\phi_n(x)\phi_m(y)$ is a sum of at most four plane waves.  If $n<m$,
the unordered modal class also contains the exchanged product
$\phi_m(x)\phi_n(y)$, and therefore
\begin{equation}
    \mathfrak F_S
    \bigl(
        \mathcal R_{\{n,m\}}^{\square}(\sigma)
    \bigr)
    \leq8.
\label{eq:square_modal_fourier_complexity}
\end{equation}
Since at most $K_\sigma^{\square}$ unordered modal classes contribute
to a single Robin eigenvalue,
\begin{equation}
    \mathfrak F_S(E_{\Lambda,\sigma}^{\square})
    \leq
    8K_\sigma^{\square}.
\label{eq:square_eigenspace_fourier_complexity}
\end{equation}
Corollary~\ref{cor:modal_complexity_observation} then gives, for every
measurable $V\subset S$ with $|V|>0$,
\begin{equation}
    \|u\|_{L^2(V)}^2
    \geq
    \gamma_{S,V,8K_\sigma^{\square}}
    \|u\|_{L^2(S)}^2,
    \qquad
    u\in E_{\Lambda,\sigma}^{\square}.
\label{eq:square_final_observation}
\end{equation}
This proves Theorem~\ref{thm:square_fixed_robin}.

\begin{remark}[Scaling]
\label{rem:square_scaling}
If $S_\ell=(0,\ell)^2$, dilation to the unit square replaces the Robin
parameter $\sigma$ by $\ell\sigma$.  Hence
Theorem~\ref{thm:square_fixed_robin} holds, with the corresponding
scaled observation constant, on every square of fixed side length.
\end{remark}


\section{The Robin problem on rational rectangles}
\label{sec:rational_rectangles}

Let
\[
    R_{a,b}=(0,a)\times(0,b),
    \qquad
    a,b>0,
    \qquad
    a\ne b,
\]
and impose the Robin condition
\[
    \partial_\nu u+\sigma u=0
    \qquad\text{on }\partial R_{a,b},
\]
with $\sigma>0$ fixed.  Suppose that the squared aspect ratio is
rational:
\begin{equation}
    \frac{a^2}{b^2}
    =
    \frac{p}{q},
    \qquad
    p,q\in\mathbb N,
    \qquad
    (p,q)=1.
\label{eq:rect_rational_ratio}
\end{equation}
Writing
\begin{equation}
    a^2=p\ell^2,
    \qquad
    b^2=q\ell^2
\label{eq:rect_scale_pq}
\end{equation}
for some $\ell>0$, the Neumann eigenvalues are indexed by the weighted
quadratic form
\[
    qn^2+pm^2.
\]
The fixed-$\sigma$ Robin correction will be compared on the
corresponding weighted quadratic shells.
\subsection{Separated frequencies and fixed-parameter expansion}
\label{subsec:rect_frequency_expansion}

Let $k_n(\tau)$ denote the unit-interval Robin frequency used in
Section~\ref{sec:square_robin}.  Scaling the interval to length $a$ shows that
the physical frequency in the first coordinate is $a^{-1}k_n(a\sigma)$.
Lemma~\ref{lem:square_fixed_sigma_expansion}, with the parameter replaced by
$a\sigma$ and the eigenvalue divided by $a^2$, therefore gives
\begin{equation}
 \frac{k_n(a\sigma)^2}{a^2}
 =\frac{\pi^2n^2}{a^2}+\frac{4\sigma}{a}
 -\frac{A_{\sigma,a}}{n^2}
 +\frac{B_{\sigma,a}}{n^4}
 +O_{\sigma,a}(n^{-6}),
\label{eq:rect_1d_expansion_a}
\end{equation}
where
\begin{equation}
 A_{\sigma,a}
 =\frac{4\sigma^2(a\sigma+3)}{3\pi^2}>0,
 \qquad
 B_{\sigma,a}
 =\frac{4a\sigma^3((a\sigma)^2+10a\sigma+20)}{5\pi^4}>0.
\label{eq:rect_AB_a}
\end{equation}
The analogous formulas with $a,n$ replaced by $b,m$ will be denoted by
$A_{\sigma,b}$ and $B_{\sigma,b}$.  Separation of variables yields
\begin{equation}
\begin{split}
 \Lambda_{n,m}^{a,b}(\sigma)
 ={}&\pi^2\left(\frac{n^2}{a^2}+\frac{m^2}{b^2}\right)
 +4\sigma\left(\frac1a+\frac1b\right)\\
 &-\frac{A_{\sigma,a}}{n^2}-\frac{A_{\sigma,b}}{m^2}
 +\frac{B_{\sigma,a}}{n^4}+\frac{B_{\sigma,b}}{m^4}
 +O_{\sigma,a,b}(n^{-6}+m^{-6})
\end{split}
\label{eq:rect_2d_expansion}
\end{equation}
when $n,m\to\infty$.  Indices equal to zero, and more generally a fixed
finite set of low indices, will be absorbed into the exceptional core below.\\
By \eqref{eq:rect_scale_pq}, the Neumann reference energy can be written as
\begin{equation}
 \pi^2\left(\frac{n^2}{a^2}+\frac{m^2}{b^2}\right)
 =\frac{\pi^2}{pq\ell^2}J,
 \qquad
 J:=qn^2+pm^2.
\label{eq:rect_reference_shell}
\end{equation}
Thus, a fixed reference shell is a level set of the positive integral
quadratic form $qn^2+pm^2$.

\subsection{Same-shell splitting}
\label{subsec:rect_stationary_splitting}

Fix a reference shell $J=R^2$ and set
\begin{equation}
    x=\frac{n}{R},
    \qquad
    m^2=\frac{R^2}{p}(1-qx^2),
    \qquad
    0<x<q^{-1/2}.
\label{eq:rect_shell_coordinate}
\end{equation}
The leading nonconstant term in
\eqref{eq:rect_2d_expansion} is $-R^{-2}h(x)$, where
\begin{equation}
    h(x)
    =
    \frac{A_{\sigma,a}}{x^2}
    +
    \frac{pA_{\sigma,b}}{1-qx^2}.
\label{eq:rect_leading_profile}
\end{equation}
The next term is determined by
\begin{equation}
    g(x)
    =
    \frac{B_{\sigma,a}}{x^4}
    +
    \frac{p^2B_{\sigma,b}}{(1-qx^2)^2}.
\label{eq:rect_next_profile}
\end{equation}
On every compact subinterval of $(0,q^{-1/2})$,
\eqref{eq:rect_2d_expansion} takes the form
\begin{equation}
    \Lambda_{n,m}^{a,b}(\sigma)
    =
    E_J+C_{\sigma,a,b}
    -R^{-2}h(x)
    +R^{-4}g(x)
    +O_{\sigma,a,b}(R^{-6}),
\label{eq:rect_bulk_profile_expansion}
\end{equation}
where
\[
    E_J=\frac{\pi^2}{pq\ell^2}J,
    \qquad
    C_{\sigma,a,b}
    =
    4\sigma\left(\frac1a+\frac1b\right).
\]

\begin{lemma}[Convexity of the leading profile]
\label{lem:rect_strong_convexity}
The function $h$ is strictly convex on $(0,q^{-1/2})$ and has a
unique critical point $x_*$, which is its global minimum.  More
precisely,
\begin{equation}
    h''(x)
    =
    \frac{6A_{\sigma,a}}{x^4}
    +
    \frac{2pqA_{\sigma,b}(1+3qx^2)}
         {(1-qx^2)^3}
    >0,
\label{eq:rect_h_second}
\end{equation}
and
\begin{equation}
    qx_*^2
    =
    \frac{\sqrt{A_{\sigma,a}}}
    {\sqrt{A_{\sigma,a}}
     +\sqrt{pA_{\sigma,b}/q}}.
\label{eq:rect_stationary_point}
\end{equation}
Moreover,
\begin{equation}
    \mu_{\sigma,a,b}
    :=
    \inf_{0<x<q^{-1/2}}h''(x)
    >0.
\label{eq:rect_uniform_convexity}
\end{equation}
\end{lemma}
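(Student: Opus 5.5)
The plan is a direct calculus verification on the explicit profile \eqref{eq:rect_leading_profile}. Throughout, write $A:=A_{\sigma,a}$ and $B:=A_{\sigma,b}$; both are strictly positive by \eqref{eq:rect_AB_a}, and this positivity is the only structural input from Section~\ref{subsec:rect_frequency_expansion} that will be used. On $(0,q^{-1/2})$ we have $0<1-qx^2<1$, so $h$ is smooth there.

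First I will differentiate termwise. This gives
\[
    h'(x)=-\frac{2A}{x^3}+\frac{2pqBx}{(1-qx^2)^2}.
\]
Differentiating again, the second term produces
\[
    2pqB\left[\frac{1}{(1-qx^2)^2}+\frac{4qx^2}{(1-qx^2)^3}\right]
    =\frac{2pqB(1+3qx^2)}{(1-qx^2)^3}.
\]
Together with $6A/x^4$ from the first term, this is exactly \eqref{eq:rect_h_second}. Both summands are strictly positive on the interval, so $h$ is strictly convex there.

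For the critical point, note that $h'(x)\to-\infty$ as $x\downarrow0$ and $h'(x)\to+\infty$ as $x\uparrow q^{-1/2}$. Since $h'$ is strictly increasing, it has exactly one zero $x_*$. By convexity $x_*$ is the global minimum, and $h$ is strictly decreasing on $(0,x_*]$ and strictly increasing on $[x_*,q^{-1/2})$; this is the two-sided monotonicity used later in the splitting argument.

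To identify $x_*$, I will write $h'(x_*)=0$ as
\[
    A(1-qx_*^2)^2=pqBx_*^4 .
\]
Both sides are squares of positive quantities, so taking square roots gives
\[
    \sqrt A\,(1-s)=\sqrt{pqB}\,\frac{s}{q}=\sqrt{pB/q}\;s,
    \qquad s:=qx_*^2 .
\]
This is linear in $s$, and solving it yields \eqref{eq:rect_stationary_point}. The resulting value of $s$ lies in $(0,1)$, consistent with $x_*\in(0,q^{-1/2})$.

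For the uniform bound \eqref{eq:rect_uniform_convexity}, I will discard the second (positive) summand of $h''$. Using $x<q^{-1/2}$, that is $x^{-4}>q^2$, this gives
\[
    h''(x)\ge \frac{6A}{x^4}\ge 6Aq^2>0 .
\]
Hence $\mu_{\sigma,a,b}\ge 6A_{\sigma,a}q^2$. None of these steps presents a real obstacle; the only points needing care are the sign bookkeeping in the second derivative and choosing the positive square root when solving for $x_*$.
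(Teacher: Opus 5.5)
Your proposal is correct and follows the paper's own direct calculus verification: you compute the same derivatives, and you reduce $h'(x_*)=0$ by positive square roots to a linear equation in $qx_*^2$. The only differences are minor. You obtain uniqueness of $x_*$ from strict monotonicity of $h'$ together with its endpoint limits, where the paper reads uniqueness off the linear equation. You also prove \eqref{eq:rect_uniform_convexity} through the explicit bound $\mu_{\sigma,a,b}\geq 6A_{\sigma,a}q^2$, where the paper argues from continuity of $h''$ and its blow-up at both endpoints; your bound is slightly cleaner and quantitative.
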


\begin{proof}
Differentiation gives
\[
    h'(x)
    =
    -\frac{2A_{\sigma,a}}{x^3}
    +
    \frac{2pqA_{\sigma,b}x}{(1-qx^2)^2},
\]
and \eqref{eq:rect_h_second} follows by differentiating once more.
The equation $h'(x)=0$ is equivalent to
\[
    pqA_{\sigma,b}x^4
    =
    A_{\sigma,a}(1-qx^2)^2.
\]
Taking positive square roots gives
\eqref{eq:rect_stationary_point}, and hence the critical point is
unique.  Since $h(x)\to+\infty$ at both endpoints, $x_*$ is the
global minimum.  Finally, $h''$ is positive and continuous on
$(0,q^{-1/2})$ and tends to $+\infty$ at both endpoints, which
implies \eqref{eq:rect_uniform_convexity}.
\end{proof}

\begin{proposition}[Same-shell splitting on rational rectangles]
\label{prop:rect_discrete_sector_splitting}
For every fixed $\sigma>0$ and every nonsquare rational rectangle
$R_{a,b}$, there exists $N_{\sigma,a,b}\geq1$ such that, on every
reference shell \eqref{eq:rect_reference_shell}, the Robin eigenvalues
associated with modal pairs satisfying
\[
    \min\{n,m\}\geq N_{\sigma,a,b}
\]
are strictly ordered separately in the two sectors
\begin{equation}
    x\leq x_*,
    \qquad
    x>x_*.
\label{eq:rect_two_sectors}
\end{equation}
In particular, a fixed Robin spectral value is attained by at most two
such modal pairs on a single reference shell.
\end{proposition}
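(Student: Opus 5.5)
The plan is to follow the triangle and square template: compare two modal pairs on one shell, show the leading term $-R^{-2}h$ dominates, and order the pairs monotonically on each side of $x_*$.

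Fix a shell $J=qn^2+pm^2=R^2$ and two distinct admissible pairs $(n,m)\neq(n',m')$ on it with $\min\{n,m,n',m'\}\geq N$ and $x=n/R<x'=n'/R$. Then $m>m'$. Subtracting the two expansions \eqref{eq:rect_2d_expansion} gives
\[
\Lambda_{n',m'}^{a,b}(\sigma)-\Lambda_{n,m}^{a,b}(\sigma)
=\Delta_1+\Delta_2+O_{\sigma,a,b}\bigl(n^{-6}+m^{-6}\bigr),
\]
where
\[
\Delta_1=A_{\sigma,a}\Bigl(\frac1{n^2}-\frac1{n'^2}\Bigr)-A_{\sigma,b}\Bigl(\frac1{m'^2}-\frac1{m^2}\Bigr)
=R^{-2}\bigl(h(x)-h(x')\bigr).
\]
Here $\Delta_2=R^{-4}(g(x')-g(x))$ is the corresponding fourth-order term. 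The remainder is $O(\min\{n,m\}^{-6})$, uniformly in $R$.

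In the sector $x<x'\leq x_*$ we have $h(x)-h(x')>0$ by strict convexity (Lemma~\ref{lem:rect_strong_convexity}). In the sector $x_*<x<x'$ the sign is reversed. So in each sector the aim is $|\Delta_2|+|\mathrm{rem}|=o(|\Delta_1|)$, which gives a strict order monotone in $x$. Consequently two pairs on the same side cannot coincide, and a fixed spectral value is attained at most once per sector, so at most twice in all.

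I would split into a bulk regime and two edge regimes, as in Proposition~\ref{prop:eventual_within_shell_simplicity}.

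\emph{Bulk regime}, $x,x'\in[\delta,q^{-1/2}-\delta]$. The key arithmetic input is the spacing of shell points. From $q(n'^2-n^2)=p(m^2-m'^2)$ we get $n'^2-n^2\geq 2n+1\gtrsim R$, so $x'-x\gtrsim R^{-1}$.
\begin{itemize}
\item When the interval $[x,x']$ stays at distance $\gtrsim 1$ from $x_*$, $|h'|$ is bounded below there, so $|h(x)-h(x')|\gtrsim R^{-1}$ and $|\Delta_1|\gtrsim R^{-3}$. Meanwhile $|\Delta_2|\lesssim R^{-4}\|g'\|_\infty|x-x'|$ is of relative size $O(R^{-2})$, and the remainder is $O(R^{-6})$.
\item Near $x_*$ I would use $\mu_{\sigma,a,b}$ from \eqref{eq:rect_uniform_convexity}. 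For $x<x'\leq x_*$ it gives
\[
h(x)-h(x')\geq \tfrac{\mu}{2}\bigl((x_*-x)^2-(x_*-x')^2\bigr)\geq \tfrac{\mu}{2}(x'-x)^2\gtrsim R^{-2},
\]
hence $|\Delta_1|\gtrsim R^{-4}$.
\item The term $\Delta_2$ is then dangerous, since it is also $R^{-4}(g(x')-g(x))$ and $g'(x_*)\neq0$ in general (no symmetry here, unlike \eqref{eq:g_stationary_diagonal}). But $|g(x')-g(x)|\lesssim |x'-x|$, so $|\Delta_2|\lesssim R^{-4}|x'-x|$. Compare this with $|\Delta_1|\gtrsim R^{-2}|x'-x|\,\bigl(|x'-x|+|x_*-x'|\bigr)$, which holds by convexity: $-h'(t)\geq\mu(x_*-t)$. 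Since $|x'-x|\gtrsim R^{-1}$, the ratio $|\Delta_2|/|\Delta_1|\lesssim R^{-2}/R^{-1}=R^{-1}\to0$.
\item The remainder $O(R^{-6})$ is negligible against $R^{-4}$.
\end{itemize}

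\emph{Edge regime near $x=0$}, i.e.\ $n<\delta R$. Here $m\asymp R$, so the remainder is $O(n^{-6})$.
\begin{itemize}
\item The $n$-part of $\Delta_1$ satisfies $A_{\sigma,a}(n^{-2}-n'^{-2})\gtrsim n^{-3}$, either because $n'\geq 2n$ or by the mean value theorem with $n'-n\geq1$.
\item The $m$-part has size $\lesssim R^{-4}(m^2-m'^2)=R^{-4}\tfrac{q}{p}(n'^2-n^2)$. This is $\lesssim \delta^2 R^{-2}\cdot n'^{-2}$-comparable small. Better, it is at most a factor $O(\delta^2)$ times the $n$-part, because $(n'^2-n^2)/R^4\leq \delta^2 (n'^2-n^2)/(n^2n'^2)$.
\item The $B$-terms are $O(n^{-4})$ relative corrections.
\end{itemize}
Choosing $\delta$ small and $N$ large gives strict order. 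The edge near $x=q^{-1/2}$ (small $m$) is symmetric with the roles of $a,n$ and $b,m$ exchanged and the sign reversed. Both edges lie inside a single sector, since $x_*$ is interior.

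\emph{Main obstacle.} I expect the main difficulty to be the bulk regime near the stationary point $x_*$, where $\Delta_1$ degenerates quadratically and the asymmetric $g'(x_*)\neq0$ must be beaten. Lemma~\ref{lem:stationary_profile_comparison} does not apply as stated there. What saves the argument is that the discrete spacing $|x'-x|\gtrsim R^{-1}$ enters $\Delta_1$ quadratically but $\Delta_2$ only linearly, with an extra factor $R^{-2}$. This is exactly where the abstract Proposition~\ref{prop:abstract_two_regime_splitting} with $d_R\asymp R^{-2}$ is invoked, with profile control replaced by the direct estimate above.

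A further care point: the $C^2$-uniform remainder must be used only through differences bounded by $O(R^{-6})$ total, not differentiated. This is fine because the bulk margin is $R^{-4}$.
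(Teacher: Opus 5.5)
Your proposal is correct and follows essentially the same route as the paper. In the bulk, strong convexity of $h$ gives the quadratic gap $\tfrac{\mu_{\sigma,a,b}}{2}(x'-x)^2$, which beats a merely Lipschitz bound on $g$ because the lattice spacing is $x'-x\geq R^{-1}$; at the edges, the $n^{-2}$ or $n^{-3}$ gap of the $A_{\sigma,a}/n^2$ term dominates the $O(n^{-4})$ and $O(n^{-6})$ corrections. Your only real departure is harmless: you condition the edge regime on the smaller index alone and bound the $m$-part of $\Delta_1$ by an $O(\delta^2)$ multiple of the $n$-part, which absorbs the paper's separate transition estimate for $x<\eta$, $x'\geq 2\eta$; for the bookkeeping, take the bulk to be $\{x\geq\delta,\ y\geq\delta\}$ and include $n'^{-6}$ and $m'^{-6}$ in the remainder difference.
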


\begin{proof}
Write
\[
    y=y(x):=\frac{m}{R}
    =
    \sqrt{\frac{1-qx^2}{p}},
    \qquad
    qx^2+py^2=1.
\]
Let $y_*=y(x_*)$.  Choose $\eta>0$ sufficiently small that
\begin{equation}
    4\eta<\min\{x_*,y_*\},
\label{eq:rect_eta_choice}
\end{equation}
and such that the edge estimates below hold.  We use the overlapping
regions
\begin{equation}
    \mathcal E_L:=\{x<2\eta\},
    \qquad
    \mathcal B:=\{x\geq\eta,\ y\geq\eta\},
    \qquad
    \mathcal E_R:=\{y<2\eta\}.
\label{eq:rect_three_region_cover}
\end{equation}
They cover the shell; moreover, $x_*\in\mathcal B$,
$\mathcal E_L$ lies to the left of $x_*$, and
$\mathcal E_R$ lies to its right.

\medskip
\noindent\emph{Bulk.}
On $\mathcal B$, \eqref{eq:rect_bulk_profile_expansion} holds with a
uniform $O_{\sigma,a,b,\eta}(R^{-6})$ remainder, and $g'$ is uniformly
bounded.  If $x<x'$ are two admissible shell coordinates lying on the
same side of $x_*$, then
\begin{equation}
    x'-x
    =
    \frac{n'-n}{R}
    \geq R^{-1}.
\label{eq:rect_lattice_spacing}
\end{equation}
By \eqref{eq:rect_uniform_convexity} and $h'(x_*)=0$,
\begin{equation}
    |h(x')-h(x)|
    \geq
    \frac{\mu_{\sigma,a,b}}{2}|x'-x|^2.
\label{eq:rect_discrete_h_gap}
\end{equation}
Indeed, if $x<x'\leq x_*$, then
\[
    -h'(t)
    \geq
    \mu_{\sigma,a,b}(x_*-t),
\]
and therefore,
\[
\begin{split}
    h(x)-h(x')
    &\geq
    \mu_{\sigma,a,b}
    \int_x^{x'}(x_*-t)\,dt\\
    &=
    \mu_{\sigma,a,b}
    \left(
        (x_*-x')(x'-x)
        +\frac{(x'-x)^2}{2}
    \right).
\end{split}
\]
The argument on the right of $x_*$ is identical.  Since $g'$ is
bounded on $\mathcal B$,
\begin{equation}
    |g(x')-g(x)|
    \leq
    C_{\sigma,a,b,\eta}|x'-x|.
\label{eq:rect_g_lipschitz}
\end{equation}
Subtracting the expansions
\eqref{eq:rect_bulk_profile_expansion}, the leading difference has
magnitude at least
\[
    cR^{-2}|x'-x|^2,
\]
while the fourth-order term is bounded by
\[
    CR^{-4}|x'-x|
\]
and the remainder by $CR^{-6}$.  By
\eqref{eq:rect_lattice_spacing}, the ratios of these two errors to the
leading term are $O(R^{-1})$ and $O(R^{-2})$, respectively.  Thus the
Robin eigenvalues are strictly ordered in each sector on
$\mathcal B$ for all sufficiently large $R$.

\medskip
\noindent\emph{Left edge.}
Along a fixed shell, define
\begin{equation}
    H_L(t)
    :=
    \frac{A_{\sigma,a}}{t^2}
    +
    \frac{pA_{\sigma,b}}{R^2-qt^2},
    \qquad
    0<t<2\eta R.
\label{eq:rect_left_H_def}
\end{equation}
Then,
\[
    H_L'(t)
    =
    -\frac{2A_{\sigma,a}}{t^3}
    +
    \frac{2pqA_{\sigma,b}t}
         {(R^2-qt^2)^2}.
\]
The ratio of the second term to the magnitude of the first satisfies
\[
    \frac{pqA_{\sigma,b}}{A_{\sigma,a}}
    \frac{t^4}{(R^2-qt^2)^2}
    \leq
    C_{\sigma,a,b}\eta^4.
\]
After choosing $\eta$ sufficiently small,
\begin{equation}
    H_L'(t)
    \leq
    -\frac{A_{\sigma,a}}{t^3},
    \qquad
    0<t<2\eta R.
\label{eq:rect_left_H_derivative}
\end{equation}
Hence, for integers $N\leq n<n'<2\eta R$,
\begin{equation}
    H_L(n)-H_L(n')
    \geq
    \begin{cases}
        c n^{-2},
            & n'\geq2n,\\
        c n^{-3}(n'-n),
            & n<n'<2n.
    \end{cases}
\label{eq:rect_left_edge_gap}
\end{equation}
The fourth-order term along the left edge is
\[
    K_L(t)
    :=
    \frac{B_{\sigma,a}}{t^4}
    +
    \frac{p^2B_{\sigma,b}}
         {(R^2-qt^2)^2}.
\]
For $t<2\eta R$,
\[
    |K_L'(t)|
    \leq
    C_{\sigma,a,b,\eta}t^{-5}.
\]
Its difference is therefore $O(n^{-4})$ if $n'\geq2n$ and
$O(n^{-5}(n'-n))$ if $n<n'<2n$.  Moreover,
\begin{equation}
    m^2
    =
    \frac{R^2-qn^2}{p}
    \geq
    \frac{1-4q\eta^2}{p}R^2,
\label{eq:rect_left_m_lower}
\end{equation}
so that $m\geq c_{p,q,\eta}R$.  Since $n<2\eta R$,
\[
    m^{-6}\leq C_{p,q,\eta}n^{-6}.
\]
The remainder in \eqref{eq:rect_2d_expansion} is therefore
$O_{\sigma,a,b,\eta}(n^{-6})$ throughout $\mathcal E_L$.  The
fourth-order term and the remainder are lower order than the gaps in
\eqref{eq:rect_left_edge_gap} once $n$ is sufficiently large.  This
gives strict ordering for all modal pairs contained in
$\mathcal E_L$ with sufficiently large indices.

We also compare points separated by the overlap.  Suppose that
$x<\eta$, $x'\geq2\eta$, and both coordinates lie in the left sector.
Then,
\[
    H_L(n)\geq\frac{A_{\sigma,a}}{n^2},
    \qquad
    H_L(n')\leq\frac{C_{\sigma,a,b,\eta}}{R^2}.
\]
Since $n<\eta R$, choosing $\eta$ sufficiently small gives
\begin{equation}
    H_L(n)-H_L(n')
    \geq
    c_{\sigma,a,b}n^{-2}.
\label{eq:rect_left_transition_gap}
\end{equation}
The fourth-order difference is $O(n^{-4})$, while the remainder
difference is
\[
    O(n^{-6})+O(R^{-6})
    =
    O(n^{-6}).
\]
Thus, \eqref{eq:rect_left_transition_gap} gives the same ordering for
such pairs.  The region $\eta\leq x<2\eta$ is already contained in
both $\mathcal E_L$ and $\mathcal B$.

\medskip
\noindent\emph{Right edge.}
The right edge is treated with $m$ as the edge variable.  Since
\[
    n^2=\frac{R^2-pm^2}{q},
\]
set
\[
    H_R(t)
    :=
    \frac{A_{\sigma,b}}{t^2}
    +
    \frac{qA_{\sigma,a}}{R^2-pt^2}.
\]
With the same choice of $\eta$,
\[
    H_R'(t)
    \leq
    -\frac{A_{\sigma,b}}{t^3},
    \qquad
    0<t<2\eta R.
\]
The corresponding fourth-order term has derivative $O(t^{-5})$, and
the other coordinate satisfies $n\geq c_{p,q,\eta}R$.  The analogues
of \eqref{eq:rect_left_edge_gap} and
\eqref{eq:rect_left_transition_gap} therefore give strict ordering on
the right sector.

Finally, only finitely many shells lie below the large-$R$ threshold.
Increasing the index threshold beyond all indices occurring on these
shells makes the high-frequency assertion vacuous there.  We obtain a
single $N_{\sigma,a,b}$ valid for every reference shell.  Since the
Robin eigenvalues are strictly ordered in each of the two sectors, a
fixed spectral value is attained by at most one high-frequency modal
pair in each sector.
\end{proof}

\subsection{One-shell and global coincidence complexity}
\label{subsec:rect_global_complexity}

For a shell $J$, let the exceptional core consist of pairs satisfying
\[
 \min\{n,m\}<N_{\sigma,a,b}.
\]
For each fixed $n$, the shell equation $qn^2+pm^2=J$ determines at most one
$m\ge0$, and similarly after interchanging the variables.  Therefore, the
exceptional core contains at most $2N_{\sigma,a,b}$ pairs.  Combining this
with Proposition~\ref{prop:rect_discrete_sector_splitting} gives the uniform
one-shell bound
\begin{equation}
 B_{\sigma,a,b}^{\mathrm{rect}}
 :=2N_{\sigma,a,b}+2.
\label{eq:rect_one_shell_bound}
\end{equation}

We next control the number of reference shells that can meet at one perturbed
spectral level.  Define the one-dimensional physical displacements
\begin{equation}
 d_n^{(a)}(\sigma)
 :=\frac{k_n(a\sigma)^2}{a^2}-\frac{\pi^2n^2}{a^2}.
\label{eq:rect_1d_displacement}
\end{equation}
The Robin quadratic form is increasing in $\sigma$.  As in the square
case, the one-dimensional Robin eigenvalue branches are strictly increasing:
the Hellmann--Feynman derivative is the positive boundary mass of the
normalized interval eigenfunction.  Hence $d_n^{(a)}(\sigma)>0$.
Equation~\eqref{eq:rect_1d_expansion_a} gives
$d_n^{(a)}(\sigma)\to4\sigma/a$, and hence
\begin{equation}
 D_{\sigma,a}^{(1)}:=\sup_{n\ge0}d_n^{(a)}(\sigma)<\infty.
\label{eq:rect_Da}
\end{equation}
Define $D_{\sigma,b}^{(1)}$ analogously and set
\begin{equation}
 D_{\sigma,a,b}^{\mathrm{rect}}
 :=D_{\sigma,a}^{(1)}+D_{\sigma,b}^{(1)}+1.
\label{eq:rect_Dab}
\end{equation}
Then, every separated modal pair satisfies the strict displacement estimate
\begin{equation}
 0<\Lambda_{n,m}^{a,b}(\sigma)-E_{J(n,m)}
 <D_{\sigma,a,b}^{\mathrm{rect}}.
\label{eq:rect_strict_displacement}
\end{equation}
The reference levels $E_J=(\pi^2/(pq\ell^2))J$ lie in an arithmetic lattice,
so their local counting function satisfies the elementary estimate
\begin{equation}
 \mathcal N_{0,a,b}(L)
 \le
 \left\lfloor\frac{pq\ell^2}{\pi^2}L\right\rfloor+1.
\label{eq:rect_reference_density}
\end{equation}
Corollary~\ref{cor:abstract_bounded_displacement}, together with
\eqref{eq:rect_one_shell_bound} and \eqref{eq:rect_strict_displacement}, now
gives
\begin{equation}
\begin{split}
 K_{\sigma,a,b}^{\mathrm{rect}}
 &\le
 B_{\sigma,a,b}^{\mathrm{rect}}
 \mathcal N_{0,a,b}(D_{\sigma,a,b}^{\mathrm{rect}})\\
 &\le
 (2N_{\sigma,a,b}+2)
 \left(
 \left\lfloor
 \frac{pq\ell^2}{\pi^2}D_{\sigma,a,b}^{\mathrm{rect}}
 \right\rfloor+1
 \right)<\infty.
\end{split}
\label{eq:rect_global_K}
\end{equation}
This proves the spectral-complexity assertion in
Theorem~\ref{thm:rational_rectangle_fixed_robin} for $a\ne b$.

\subsection{Observation closure}
\label{subsec:rect_observation_closure}

A one-dimensional Robin eigenfunction on an interval is a linear combination
of two exponentials with frequencies $\pm k$.  Hence, each separated product
mode on $R_{a,b}$ is a sum of at most four plane waves.  Because $a\ne b$,
we count ordered modal pairs and no systematic exchange partner needs to be
included in one modal class.  Thus,
\begin{equation}
 \mathfrak F_{R_{a,b}}(\mathcal R_{n,m}^{a,b}(\sigma))\le4.
\label{eq:rect_modal_fourier_complexity}
\end{equation}
The complete eigenspace is a sum of at most
$K_{\sigma,a,b}^{\mathrm{rect}}$ such modal spaces, so
Corollary~\ref{cor:modal_complexity_observation} yields
\begin{equation}
 \mathfrak F_{R_{a,b}}(E_{\Lambda,\sigma}^{a,b})
 \le4K_{\sigma,a,b}^{\mathrm{rect}}
\label{eq:rect_eigenspace_fourier_complexity}
\end{equation}
and, for every measurable $V\subset R_{a,b}$ with $|V|>0$,
\begin{equation}
 \|u\|_{L^2(V)}^2
 \ge
 \gamma_{R_{a,b},V,4K_{\sigma,a,b}^{\mathrm{rect}}}
 \|u\|_{L^2(R_{a,b})}^2
\label{eq:rect_final_observation}
\end{equation}
for every Robin eigenvalue $\Lambda$ and every
$u\in E_{\Lambda,\sigma}^{a,b}$.  This proves
Theorem~\ref{thm:rational_rectangle_fixed_robin} in the nonsquare case; the
square case was proved in Section~\ref{sec:square_robin}.


\section{Conclusion}
\label{sec:conclusion}

We have proved uniform complete-eigenspace non-localization for the Robin
Laplacian on the equilateral triangle and on rectangles with rational
squared aspect ratio.  On the triangle, the estimate is uniform for
$\sigma$ in a nontrivial interval containing the Neumann endpoint.  For
every fixed $\sigma>0$, the conclusion holds uniformly over the entire
spectrum, both on the triangle and on each fixed rational rectangle.

The main fixed-parameter spectral phenomenon is a spectrum-wide bound on
coincidence complexity.  At high frequency, Robin perturbations split the
arithmetic reference shells up to a bounded exceptional set.  Combined with
bounded spectral displacement, this gives a uniform bound on the number of
modal classes contributing to a single Robin eigenspace.  The two geometries
realize this mechanism differently: through the coupled McCartin secular
system on the equilateral triangle and through weighted quadratic shells and
discrete same-shell splitting on rational rectangles.

Spectral simplicity is therefore not necessary for non-localization of
complete eigenspaces.  What is relevant here is a spectrum-wide bound on the
complexity of spectral coincidences.  Since each modal class has uniformly
bounded plane-wave complexity, the complete eigenspaces inherit a uniform
Fourier-complexity bound, and the multidimensional Tur\'an--Nazarov inequality
converts it into observation on every measurable set of positive measure.

The parameter dependence is different in the perturbative and
fixed-parameter regimes.  The small-Robin theorem on the triangle is uniform
in both the spectral level and $0\leq\sigma\leq\sigma_0$.  For an arbitrary
fixed $\sigma>0$, the observation constant may depend on $\sigma$, and in the
rectangular case also on the fixed aspect ratio.  No uniform lower bound is
asserted over all $\sigma>0$ or over the family of rational aspect ratios.


\end{document}